\def\be{\begin{equation}}
\def\ee{\end{equation}}
\newcommand {\p} {\partial}
\newtheorem{theorem}{Theorem}[section]
\newtheorem{lemma}[theorem]{Lemma}
\newtheorem{corollary}[theorem]{Corollary}
\newtheorem{proposition}[theorem]{Proposition}
\theoremstyle{remark}
\newtheorem{remark}[theorem]{Remark}
\newtheorem{exmp}[theorem]{Example}
\theoremstyle{definition}
\newtheorem{definition}[theorem]{Definition}
\DeclareRobustCommand\widecheck[1]{{\mathpalette\@widecheck{#1}}}
\def\@widecheck#1#2{%
    \setbox\z@\hbox{\m@th$#1#2$}%
    \setbox\tw@\hbox{\m@th$#1%
       \widehat{%
          \vrule\@width\z@\@height\ht\z@
          \vrule\@height\z@\@width\wd\z@}$}%
    \dp\tw@-\ht\z@
    \@tempdima\ht\z@ \advance\@tempdima2\ht\tw@ \divide\@tempdima\thr@@
    \setbox\tw@\hbox{%
       \raise\@tempdima\hbox{\scalebox{1}[-1]{\lower\@tempdima\box
\tw@}}}%
    {\ooalign{\box\tw@ \cr \box\z@}}}
\DeclareMathOperator{\tr}{Tr}
\DeclareMathOperator{\di}{div}
\newcommand{\bigk}{\mathcal K(E,B)}
\newcommand{\bigkp}{\mathcal K(E_+,B_+)}
\newcommand{\bigkm}{\mathcal K(E_m,B_m)}
\newcommand {\R} {\mathbb{R}}
\numberwithin{equation}{section}
\date{}
\title[Quadratic nonlinearity]{Partial differential equations with quadratic nonlinearities viewed as matrix-valued optimal ballistic transport problems}
\author[D.~Vorotnikov]{Dmitry Vorotnikov}
\address[D.~Vorotnikov]{University of Coimbra, CMUC, Department of Mathematics,  3001-501 Coimbra, Portugal}{}
\email{mitvorot@mat.uc.pt}
\begin{document}

\begin{abstract}  We study a rather general class of optimal ``ballistic'' transport problems for matrix-valued measures. These problems naturally arise, in the spirit of \emph{Y. Brenier. Comm. Math. Phys. (2018) 364(2) 579-605}, from a certain dual formulation of nonlinear evolutionary equations with a particular quadratic structure reminiscent both of the incompressible Euler equation and of the quadratic Hamilton-Jacobi equation. The examples include the ideal incompressible MHD, the template matching equation, the multidimensional Camassa-Holm (also known as the $H(\di)$ geodesic equation), EPDiff, Euler-$\alpha$, KdV and Zakharov-Kuznetsov equations, the equations of motion for the incompressible iso\-tropic elastic fluid and for the damping-free Maxwell's fluid. We prove the existence of the solutions to the optimal ``ballistic'' transport problems. 
For formally conservative problems, such as the above mentioned examples, a solution to the dual problem determines a ``time-noisy'' version of the solution to the original problem, and the latter one may be retrieved by time-averaging.  This yields the existence of a new type of absolutely continuous in time generalized solutions to the initial-value problems for the above mentioned PDE. We also establish a sharp upper bound on the optimal value of the dual problem, and explore the weak-strong uniqueness issue.
\end{abstract}
\maketitle

Keywords: optimal transport, fluid dynamics, hidden convexity, Euler-Arnold equations

\textbf{MSC [2020] 35D99, 37K58, 47A56, 49Q22, 76M30}


\section{Introduction}  
It is well-known that Cauchy problems for nonlinear conservative hyperbolic equations, including the ones with quadratic nonlinearities, can have infinitely many weak solutions, cf. \cite{DP79}. This phenomenon has recently attracted lots of attention in the context of convex integration, see, e.g., \cite{lel09,lel10,BNF16,DS17}. No general selection principle for weak solutions has been found so far, although there is some consensus that for physically relevant solutions the energy (that should be formally conserved along the flow, but often fails to do it for the weak solutions) cannot exceed its initial value, cf. \cite{DS17}.

In this connection, for the incompressible Euler and the inviscid Burgers equations, Brenier \cite{CMP18} has recently suggested   to search for the solutions that minimize the time average of the (kinetic) energy. This problem might not always admit a solution, but it leads to a dual variational problem. The dual problem has an optimizer both for the incompressible Euler and the inviscid Burgers. Moreover, he has found explicit formulas that relate the smooth solutions of the incompressible Euler and the inviscid Burgers existing on small time intervals with the solutions of the corresponding dual problems. 

The goal of this paper is to develop Brenier's approach by finding structures in nonlinear quadratic PDE that permit to define similar dual problems and to prove their solvability. Moreover, given a solution of the dual problem, we would like  to be able to construct from it a suitable generalized solution to the original problem that would coincide with the strong solution of the PDE in question, when the latter exists, at least on small time intervals. We will adopt a rather abstract functional analytic setting, and the main assumptions that allow us to achieve these goals are the \emph{formal conservation of energy}, and a certain \emph{trace condition}. In this framework, we can show existence of a solution to the dual problem belonging to a rather natural regularity class, and a suitably defined time average of the latter provides a solution to the original problem with reasonable consistency and regularity properties. More exactly, {\bf the dual problem can be solved in a relaxed sense assuming neither the trace condition nor the formal conservativity, but the trace condition dramatically improves the regularity, whereas the formal conservativity is needed to secure the consistency and the weak-strong uniqueness}. Our abstract approach makes it possible to include a lot of PDE in the theory.  

The proposed abstract problem consists in finding a function $$v:[0,T]\to X^n,$$ in a suitable regularity class, that solves \be \label{e:aeuler}\p_t v= PL(v\otimes v), \quad v(t,\cdot)\in P(X^n), \quad v(0,\cdot)=v_0\in P(X^n).\ee Here $(\Omega,\mathcal A,\mu)$ is a separable probability measure space, $X:=L^2(\Omega)$, $n\in \mathbb N$, $$P:X^n\to X^n$$ is any orthogonal projector (i.e., a self-adjoint idempotent linear operator), and $$L: D(L)\subset X_s^{n\times n}\to X^n$$ is a closed densely defined linear operator. This setting can be further generalized: namely, it is possible to add a linear term to \eqref{e:aeuler},  see Remark \ref{r:aoper}.  

We will mainly focus on the situation when $L$ satisfies the formal \emph{conservativity} condition \be \label{e:acons} (L(v\otimes v), v)=0, \quad v\in P(X^n),\ee provided $v$ is a \emph{sufficiently smooth} vector field (see conventions' section at the end of the Introduction for the meaning of this expression). However, this assumption will not be required everywhere, and we will explicitly specify every instance when it is necessary. 

When \eqref{e:acons} is assumed, we call \eqref{e:aeuler} the \emph{abstract Euler equation}. As we will see, aside from the incompressible Euler and the inviscid Burgers equations studied in \cite{CMP18}, this setting includes the ideal incompressible MHD equations, the template matching equation, the multidimensional Camassa-Holm (also known as the $H(\di)$ geodesic equation), EPDiff, Euler-$\alpha$, KdV and Zakharov-Kuznetsov equations, and two models of motion of incompressible elastic fluids.

Our primal problem is to search for a weak solution to \eqref{e:aeuler} that minimizes the time integral of the energy \begin{equation} \label{e:consp1} K(t):=\frac 12(v(t),v(t)).\end{equation}
We will observe that this problem has a dual formulation that, up to regularity issues, reads \be\label{e:concint}\int_0^T (v_0,q)\, dt-\frac 1 2 \int_0^T (G^{-1}q,q)\, dt \to \sup\ee subject to the constraints \be\label{e:constrint}\p_t G+2(L^*\circ P) q=0, \quad G(T)= I.\ee (the parentheses in \eqref{e:concint} stand for the scalar product in $X^n$). The dual variables are a positive-semidefinite-valued matrix field $G: [0,T]\to X^{n\times n}_s$ and a vector field $q: [0,T]\to X^n$.  

The dual problem \eqref{e:concint}, \eqref{e:constrint} may be viewed as a rather general optimal ballistic transport problem. This problem can be solved for continuous (and ``vanishing at infinity'', in the case of non-compact $\Omega$) initial data under a very mild assumption that the image of the constant function $I$ (the unit matrix of size $n$) belongs to the kernel of the operator $PL$ (\textbf{Theorem \ref{t:exweak}}). This particularly holds if $L$ is a differential operator with vanishing zero-order terms. Our key finding is a certain trace condition (\textbf{Definition \ref{deftr}}) that yields an extra a priori bound, which allows us to solve the dual problem in a natural regularity class (\textbf{Theorem \ref{t:ex}}) and for merely $L^2$ initial data.  Another important finding is that  the conservativity hypothesis \eqref{e:acons} implies a neat link between the original and the dual problems (\textbf{Theorem \ref{t:smooth}}). Namely, a strong solution $v$ to the original problem, if it exists, generates the ``time-noisy'' function $V:=v+(t-T)\partial_t v$. The same $V$ can be obtained by solving the dual problem (which is much easier). Moreover, we can show that there is no duality gap provided there exists a strong solution on $(0,T)$, at least when $T$ is not too large.  This suggests to construct a generalized solution $v$ to the original problem by time-averaging the corresponding function $V$ extracted from the solution of the dual problem. This \textbf{generalized solution automatically belongs to the same regularity class as the strong solutions} (Remark \ref{r:time}). In this context it is possible to establish a sharp upper bound on the optimal value of the dual problem 
(\textbf{Theorem \ref{t:ldistr}}), and to investigate the weak-strong uniqueness and the discrepancy between the original and the dual problems (\textbf{Corollaries \ref{c:wsu} and \ref{c:enlevels}}). The latter results (i.e., the ones of Section \ref{s:uniq}) are \textbf{new even for the incompressible Euler}.

 There exists a celebrated and rather general approach to conservative PDE with quadratic nonlinearities \cite{Ar98,KW08}:  the key idea that goes back to Arnold \cite{A66} is that some of them can be regarded as geodesic equations on infinite-dimensional Lie groups. The latter are known as the Euler-Arnold equations or the Euler-Poincar\'e equations. This approach can be employed to prove local in time existence of solutions, as was done in the famous paper \cite{EM70}. The underlying infinite-dimensional Riemannian structures often have non-negative curvature, which might prevent global existence of smooth solutions, cf. \cite{ra18}. However, in applications it is important to have some global in time solutions, at least in a generalized sense.

The relation between the problem \eqref{e:aeuler} and the dual optimal ballistic transport problem leads us to another unified approach to PDE with quadratic nonlinearities. This approach is less geometric but can be applicable to a larger class of PDE.  Indeed, the problem \eqref{e:aeuler} is more general than the Euler-Arnold equation since it merely requires a quadratic structure of the nonlinearity but not necessarily a hidden rich infinite-dimensional geometry. Moreover, our approach is mainly concerned with generalized solutions on time intervals of arbitrary length and without restrictions on the initial data. 

Anyway, the majority of our examples may be viewed as Euler-Arnold equations, and
the incompressible Euler equation is our basic prototype. 

Several types of global generalized solutions were suggested in the literature for the incompressible Euler equation for $d>2$ and without restrictions on the initial data. These include the weak (distributional) solutions. Actually, only existence of the so-called \emph{wild} distributional solutions that have an instantaneous jump of the kinetic energy has been proved so far \cite{W11}. For convenience, below we refer to this kind of solutions as type A solutions. 

The second type (below referred to as type B) of known solutions to the incompressible Euler equation are the dissipative solutions of Lions \cite{Li96}. These solutions are merely weakly continuous in time and may be obtained as the weak vanishing viscosity limit of the corresponding solutions to the Navier-Stokes equations. 
Another type of solutions to the incompressible Euler equation is the generalized Young measure solution \cite{DiPerna-Majda, Bre11, Sze} (type C). A related but more optimal-transport-inspired approach is the generalized flow of Brenier \cite{B89,Br20} (type D). Both type C and type D are so weak that a value of a solution at any spatio-temporal point is substituted with a probability measure in a relevant way.

Finally, the fifth type (type E) of solutions to the incompressible Euler and inviscid Burgers equations has been recently suggested by Brenier in \cite{CMP18}. In this paper we develop his approach, and construct type E solutions (for arbitrary initial data and in any space dimension $d$) to the ideal incompressible MHD, EPDiff, Euler-$\alpha$, KdV and Zakharov-Kuznetsov equations (only for $d=1,2,3$), to the multidimensional Camassa-Holm (also known as the $H(\di)$ geodesic equation), to the equations of motion of the ``neo-Hookean'' incompressible iso\-tropic elastic fluid and of the damping-free Maxwell's fluid, and (in a weaker sense) to the template matching equation. The consistency and weak-strong uniqueness results announced above, as well as the sharp optimal value bound, are applicable to these PDE. 

Aside from the KdV and Zakharov-Kuznetsov equations, proving the global solvability of the equations listed above faces the same or even stronger difficulties as in the case of the 3D incompressible Euler equation (for $d>2$ and often even for $d=2$). 
Naturally, significant restrictions on the initial data or on the size of the domain can help to prove the existence of global solutions of the above models. For example, one can find such results in \cite{BBS88,caillin,Xu20} and \cite{ST05,ST07,LSZ15,lei2015,lei2016} for the ideal incompressible MHD and the ``neo-Hookean'' incompressible isotropic elastic fluid, respectively. Apart from  that, adding dissipative terms, i.e., making the equations at least partially parabolic, can also secure global well-posedness, as was done in \cite{ER15} for the Maxwell's fluid. 

We are not aware of any theorems about existence of type A or C solutions emanating from arbitrary initial data for the above-mentioned examples. We refer to \cite{BNF16,F18,F21} for some related results concerning existence and non-existence of type A solutions for the ideal incompressible MHD. Some particular weak solutions to the EPDiff equations (with initial data being some specific measures) were discussed in \cite{EPD05}.

 The ideal incompressible MHD and Euler-$\alpha$ are known to have type B solutions  \cite{Wu00,V12}. The quadratic structure of the abstract Euler equation \eqref{e:aeuler} together with the conservativity \eqref{e:acons} comply nicely with  Lions' concept (see \cite[Appendix]{V12} for a related discussion). We have little doubt that all the examples of Section \ref{Sec3} admit type B solutions (this should not be difficult to prove but lies beyond the scope of this article). It would be interesting to find a link between our type E solutions and the type B solutions. However, our type E solutions are obtained in a completely different manner, and are more regular  in time and in space. 

Type D solutions to the the multidimensional Camassa-Holm were recently constructed in \cite{GNV18}. 

\subsection*{Plan of the paper} 
Section \ref{s:opt} clarifies the link of the proposed approach with the theory of optimal transport. In Section \ref{seca}, we describe our basic setting in detail, discuss the dual problem and prove the absence of duality gap on small time intervals in the case where there exists a strong solution.  In Section \ref{s:exi}, we prove the existence theorems and, in the formally conservative case, define the generalized solutions to the abstract Euler equation. The relevance of this definition is supported by Section \ref{s:uniq}, where we establish a sharp upper bound on the optimal value of the dual problem, and explore the weak-strong uniqueness issue. At the end of Section \ref{s:uniq} we discuss the applicability of our theory to the case when an extra linear term is present in \eqref{e:aeuler}. Section \ref{Sec3} examines the above-mentioned PDE examples. 

\subsection*{Basic notation and conventions} 

Let $(\Omega,\mathcal A,\mu)$ be a separable probability measure space. Denote for brevity $X=L^2(\Omega)$. We use the notations $\R^{n\times n}$ and $\R^{n\times n}_s$ for the spaces of $n\times n$ matrices and symmetric matrices, resp., with the scalar product generated by the Frobenius norm.  Let $X^{n\times n}_s$ be the subspace of $X^{n\times n}$ consisting of symmetric-matrix-valued functions. The parentheses $(\cdot,\cdot)$ will stand for the scalar products in $X^n$ and $X^{n\times n}_s$. For $A,B\in X^{n\times n}_s$, we write $A\geq B$ and $A>B$ when $A-B$ is a nonnegative-definite-matrix-function and is a strictly-positive-definite-matrix-function, resp. The action of a matrix-function $A$ from $X^{n\times n}_s$ on a vector-function $\xi$ from $X^n$ is denoted $A.\xi$ or simply $A\xi$. The symbol $I$ in most cases stands for the identity matrix of a relevant size, but sometimes denotes the identity operator in a Hilbert space. 

Fix $n\in \mathbb N$ and the operators $P$, $L$ as above. Let $L^*:D(L^*)\subset X^n\to X_s^{n\times n}$ be the adjoint of $L$. Fix some linear dense subspace $\mathcal R\subset X$. Assume that $$\mathcal R\subset L^\infty(\Omega)$$ and $$\mathcal R^{n}\subset D(L^*)\subset X^n,\ \mathcal R^{n\times n}_s\subset D(L)\subset X^{n\times n}_s,\  L(\mathcal R^{n\times n}_s)\subset \mathcal R^{n},\ L^*(\mathcal R^{n})\subset \mathcal R^{n\times n}_s,\ P(\mathcal R^{n})\subset \mathcal R^{n}.$$ We will abuse the language and call the elements of $\mathcal R$ \emph{sufficiently smooth} functions. For example, if $\Omega$ is a Riemannian manifold, we can take the set of conventional smooth functions as our $\mathcal R$.

Fix also a linear dense subspace $\widehat{\mathcal R}\subset L^2((-\epsilon,T+\epsilon)\times \Omega)$, with a small $\epsilon>0$. Assume that $$\widehat{\mathcal R}\subset L^\infty((-\epsilon,T+\epsilon)\times\Omega), \quad \partial_t \widehat{\mathcal R}\subset\widehat{\mathcal R},\quad \widehat{\mathcal R}(t)=\mathcal R, \ t\in [0,T],$$ and  $$L(\widehat{\mathcal R}^{n\times n}_s)\subset \widehat{\mathcal R}^{n},\ L^*(\widehat{\mathcal R}^{n})\subset \widehat{\mathcal R}^{n\times n}_s,\ P(\widehat{\mathcal R}^{n})\subset \widehat{\mathcal R}^{n}.$$ 
A time-dependent function $\upsilon:[0,T]\to X$ is called sufficiently smooth if $\upsilon\in\widehat{\mathcal R}\big|_{[0,T]}$. 

Sometimes $\Omega$ will be assumed to be a separable locally compact metric space. In this case, $\mathcal M(\Omega)$ will denote the space of finite signed Radon measures on $\Omega$, and $C_0(\Omega)$ will denote of the completion of the space of compactly supported continuous functions on $\Omega$ w.r.t. the supremum norm. Moreover, it will be assumed that $\mathcal R\subset C_0(\Omega)$ and the inclusion is dense w.r.t. the supremum norm. 

Some extra notation to be used exclusively in Section \ref{Sec3} is postponed until the beginning of that section.

\section{The optimal transport perspective}
\label{s:opt}
The goal of this heuristic section is to help the reader to feel the relevance of the abstract structure \eqref{e:aeuler} and of the dual problem \eqref{e:concint}, \eqref{e:constrint} via exploring the links with the theory of optimal transport. 
\subsection{Similarity between the incompressible Euler and the quadratic Hamilton-Jacobi equations} \label{eulhj}
The Euler equations of motion of a homogeneous incompressible inviscid fluid \cite{Ar98} are \begin{gather} \label{euler1} \p_t v+\di (v\otimes v)+\nabla p=0,\\ 
\label{euler2}  \di v=0,\\ \label{euler4}
v(0)=v_0.\end{gather} The unknowns are $v:[0,T]\times \Omega\to \R^d$ and $p:[0,T]\times \Omega\to \R$. Here, for simplicity,  $\Omega$ is the periodic box $\mathbb T^d$. The Euler equations may be rewritten in the form \be \label{e:euler} \p_t v= \mathcal P\mathcal L(v\otimes v), \quad v(t,\cdot)\in \mathcal P(X^d),\quad v(0,\cdot)=v_0\in \mathcal P(X^d),\ee where $X:=L^2(\Omega)$ and $\mathcal P:X^d\to X^d$ is the Leray-Helmholtz projector \cite{Te79}, whereas $$\mathcal L=-\di,\quad \mathcal L: D(\mathcal L)\subset X_s^{d\times d}\to X^d.$$ The kinetic energy $\frac 12\int_\Omega |v|^2(t,x)\,dx$ is formally conserved due to \be \label{e:cons} (\mathcal L(v\otimes v), v)=0, \quad v\in \mathcal P(X^d)\ee for any  sufficiently smooth vector field $v$. Hence, the incompressible Euler fits into our abstract framework \eqref{e:aeuler} and satisfies \eqref{e:acons}. 

The quadratic Hamilton-Jacobi equation \be \label{e:hj} \p_t \psi+\frac 1 2 |\nabla \psi|^2=0, \ \psi(0)=\psi_0.\ee  plays an important role in the theory of Monge-Kantorovich optimal transport \cite{villani03topics,villani08oldnew,S15}. The unknown is $\psi:[0,T]\times \Omega\to \R$ with $\Omega$ as above.  For our purposes, it is instructive to rewrite \eqref{e:hj} in terms of the artificial matricial variable $$U(t)=-\frac 2 d \psi(t) I+ \int_0^t \left[(\nabla \psi) \otimes (\nabla \psi)- \frac 1 d |\nabla \psi|^2 I\right];$$ the result is \be \label{e:hju}  \p_t U=\frac 1 4 ((\nabla \tr U) \otimes (\nabla \tr U)), \ U(0)=-\frac 2 d \psi_0 I.\ee  Now, setting $v=\nabla \psi= -\frac 1 2 \nabla \tr U$, we recast \eqref{e:hj} as \be \label{e:hjv} \p_t v+\frac 1 2 \nabla \tr (v\otimes v)=0, \ v(0)=\nabla \psi_0.\ee For $d=1$, \eqref{e:hjv} coincides with the inviscid Burgers equation. Setting $P=I$, $L=-\frac 1 2 \nabla \tr$, we recover our abstract equation \eqref{e:aeuler}. Note that \eqref{e:acons} does not hold and the energy $\frac 1 2(v,v)$ is not conserved except for $d=1$. 

We have just reformulated the Hamilton-Jacobi equation so that it became reminiscent of  the incompressible Euler equation. The converse of this observation is the following one. Given an initial datum $v_0$ for the incompressible Euler equation with zero mean, one can find $V_0\in X^{d\times d}_s$ so that $v_0=-\mathcal P \di V_0$ (cf. Remark \ref{revers}). Then the solution to \eqref{e:euler} can at least formally be expressed in terms of a new symmetric-matrix-valued variable $V$ such that $v=-\mathcal P \di V$: \be \label{e:eulerne} -\mathcal P\di \p_t V=- \mathcal P \di (( \mathcal P  \di V) \otimes (\mathcal P  \di V)).\ee Without loss of generality (cf. Remark \ref{revers}), $V$ may be selected in such a way that \be \label{e:eulernew}  \p_t V=(( \mathcal P  \di V) \otimes (\mathcal P  \di V)), \ V(0)=V_0.\ee This embodiment of the incompressible Euler equation resembles the Hamilton-Jacobi equation in its matricial formulation \eqref{e:hju}.


\subsection{The dual problem and the optimal ballistic transport} As anticipated in the Introduction, our primal problem consists in finding a weak solution to \eqref{e:aeuler} that minimizes the time integral of the energy, and the dual problem formally reads as \eqref{e:concint}, \eqref{e:constrint}. 

Assume for a while that the initial data satisfy the following extra assumption \be \label{e:goodv} v_0\in PL( X_s^{n\times n}).\ee  Then we have \be \label{e:inpsi} v_0=PL U_0 \ee for some $U_0\in X^{n\times n}_s$ (cf. Remark \ref{revers}). Consequently, for a maximizer $(G,q)$ to \eqref{e:concint}, \eqref{e:constrint} we deduce $$\int_0^T (v_0,q)\, dt=-\frac 1 2\int_0^T (U_0,\p_t G) \, dt=\frac 1 2 (U_0,G(0))- \frac 1 2 (U_0,I).$$ Since the last term does not depend on $G,q$, \eqref{e:concint} becomes\be\label{e:concintb}\frac 1 2 (U_0,G(0))-\frac 1 2 \int_0^T (G^{-1}q,q)\, dt \to \sup.\ee

For the Hamilton-Jacobi equation \eqref{e:hjv} the constraints read \be\label{e:constrinth1}\p_t G+(\di q) I=0, \quad G(T)= I.\ee Hence, there exists a non-negative scalar function $\rho$ such that $G=\rho I$.  Moreover, \eqref{e:inpsi} obviously holds with $U_0=-\frac 2 d \psi_0 I$.  The dual problem \eqref{e:concintb}, \eqref{e:constrint} for the Hamilton-Jacobi equation  may be rewritten as \be\label{e:concinth}   -\int_{\Omega} \psi_0 \rho(0)\,dx  -\frac 1 2 \int_0^T\int_{\Omega} \rho^{-1}|q|^2\,dx \, dt \to \sup \ee subject to the constraints \be\label{e:constrinth}\p_t \rho+\di q=0, \quad \rho(T)=1,\quad \rho\geq 0.\ee This is quite similar to the dynamical formulation of the Monge-Kantorovich optimal transport \cite{BB00,villani03topics,S15}, where $\rho(t)$ describes the evolution of the transported density; the only difference is that instead of prescribing the initial density $\rho(0)$ we have an extra term in \eqref{e:concinth} involving $\rho(0)$ and $\psi_0$. More accurately, \eqref{e:concinth}, \eqref{e:constrinth} can be viewed as an optimal \emph{ballistic} transport problem for a probability density $\rho(t)$. We have borrowed this phrasing from \cite{BG19}, where related problems were investigated. In the same spirit, \eqref{e:concint}, \eqref{e:constrint} can be regarded as an optimal ballistic transport problem for a positive-definite-valued matrix-valued density $G(t)$.

\begin{remark}[Ballistic vs. non-ballistic transport] The non-ballistic counterpart of our problem \be\label{e:concintnb}-\frac 1 2 \int_0^T (G^{-1}q,q)\, dt \to \sup\ee under the constraints \be\label{e:constrintnb}\p_t G+2(L^*\circ P) q=0, \quad G(0)=G_0,\ G(T)= G_T,\ee
\be\label{e:bweint} G\ \mathrm{is}\ \mathrm{a}\  \mathrm{positive-semidefinite}\ \mathrm{matrix-valued}\ \mathrm{density},\ee can fail to be solvable in any weak sense at this level of generality (even for $P=I$), because the supremum in \eqref{e:concintnb} can be $-\infty$, cf. \cite[Introduction]{BV18}, and it is plausible that it can also be $0$ (which is not compatible with the existence of optimizers), in line with \cite{MM5, MM06, BHM12, SV19}. The ballistic situation is rather different: we a priori know that the supremum in \eqref{e:concint} is non-negative, and it is important to be able to avoid that it becomes $+\infty$, cf. Remark \ref{plusin}. \end{remark}

\begin{remark}
The trace condition (see Definition \ref{deftr}) and the conservativity hypothesis \eqref{e:acons}, which are so important for our paper, have nothing to do with the classical optimal transport, since both of them fail for the problem \eqref{e:concinth}, \eqref{e:constrinth}, and become relevant only in our general framework.  \end{remark}

\subsection{The bidual problem} 
It is well-known, cf. \cite{BB07, S15}, that the dynamical Monge-Kanto\-rovich optimal transport admits a dual formulation. The corresponding dual formulation of the ballistic problem \eqref{e:concinth}-\eqref{e:constrinth} formally reads \be\label{e:bidualh}   -\int_{\Omega} \varphi(T)\,dx  \to \inf \ee subject to the constraints \be\label{e:bdch}\p_t \varphi+\frac 1 2 |\nabla \varphi|^2\leq 0, \quad \varphi(0)\leq \psi_0 \ \mathrm{a.e.}\ \mathrm{in}\ \Omega.\ee Note that this may be regarded both as a convex relaxation of \eqref{e:hj} and as a bidual problem to the Hamilton-Jacobi equation in its ``Eulerian'' layout \eqref{e:hjv}. It is also worth to consider the analogous relaxation of \eqref{e:hju}: \be\label{e:bidualhu}   \int_{\Omega} \tr\,V(T)\,dx  \to \inf \ee subject to the constraints \be\label{e:bdchu} \frac 1 4 ((\nabla \tr V) \otimes (\nabla \tr V)) \leq \p_t V, \ V(0)\leq -\frac 2 d \psi_0 I \ \mathrm{a.e.}\ \mathrm{in}\ \Omega.\ee More generally, taking for granted \eqref{e:inpsi}, by mimicking the corresponding derivation of the dual dynamical Monge-Kantorovich optimal transport, see e.g. \cite{S15}, it is possible to formally derive the following abstract problem: \be\label{e:bidual}   \int_{\Omega} \tr\,\Xi(T)\,dx  \to \inf \ee subject to the constraints \be\label{e:bdc}  (PL \Xi) \otimes (PL \Xi)\leq  \p_t \Xi, \ \Xi(0)\leq U_0 \ \mathrm{a.e.}\ \mathrm{in}\ \Omega.\ee This problem is dual to \eqref{e:concintb}, \eqref{e:constrint} and thus bidual to \eqref{e:aeuler}. In particular, for the incompressible Euler equation we obtain a convex relaxation of \eqref{e:eulernew}.

\begin{remark}[The bidual problem as a relaxation of the original one] \label{revers} Let us show that the bidual problem may be regarded as a convex relaxation of \eqref{e:aeuler}. Assume \eqref{e:goodv}. Then there is $U_0\in X^{n\times n}_s$ verifying \eqref{e:inpsi}.  By the way, when $PL$ has closed range in $P(X^n)$, \eqref{e:goodv} holds if and only if $v_0$ is orthogonal to the kernel of $L^*$ in $P(X^n)$. We claim that \eqref{e:aeuler} is at least formally equivalent to \be\label{e:aemat}\p_t U=  (PL U) \otimes (PL U), \ U(0)=U_0.\ee  Indeed, any solution $U$ to \eqref{e:aemat} obviously generates a solution $v=PL U$ to \eqref{e:aeuler}. Conversely, \eqref{e:inpsi} and \eqref{e:aeuler} imply that $v(t)=PL V(t)$ for some $V(t)\in X^{n\times n}_s$. Moreover, \eqref{e:aeuler} implies \be\label{e:aemat1}\p_t V=  (PL V) \otimes (PL V)+\xi\ee with some $\xi(t)\in X^n$, $PL\xi=0$.  Now $$U(t)=V(t)- \int_0^t \xi$$ solves \eqref{e:aemat}. It is apparent that the bidual problem \eqref{e:bidual}, \eqref{e:bdc}  may be viewed as a convex relaxation of \eqref{e:aemat}. This argument is applicable to the incompressible Euler equation on the torus if $v_0$ has zero mean (such $v_0$ are orthogonal to the kernel of $\mathcal L^*$, and $\mathcal P\mathcal L$ has closed range in $\mathcal P(X^d)$).  \end{remark}

\begin{remark}[Convex relaxation by dint of subsolutions] \label{convint}Another natural convex relaxation of the primal problem is to search for the \emph{subsolution} $(v,M)$ to \eqref{e:aeuler} that minimizes the ``energy'' $\frac 1 2 \tr M$, cf. \cite{CMP18,Br20}. Here the subsolutions\footnote{This definition of subsolutions does not take into account the corresponding \emph{wave cone} that plays a significant role in the theory of convex integration, cf. \cite{lel09,lel10}. However, in the case of the incompressible Euler equation this definition \textbf{is equivalent} \cite{DS17} to the conventional definition of subsolutions. When the wave cone is not very large, cf. \cite{F18,Sze12}, there is a discrepancy between the two definitions.} are the pairs $(v,M)$, $v\otimes v \leq M$, that satisfy \be \label{e:aeulersub}\p_t v= PL(M), \quad v(t,\cdot)\in P(X^n), \quad v(0,\cdot)=v_0\in P(X^n)\ee in a weak sense. The resulting problem may be viewed as dual to \eqref{e:concint}, \eqref{e:constrint} in the spirit of Fenchel--Rockafellar. Moreover, in the case of the incompressible Euler, its optimal value coincides \cite[Theorem 2.6]{CMP18} with the optimal value of \eqref{e:concint}, \eqref{e:constrint}. However, Brenier's proof does not seem to be easily extendable to the general situation and even to the inviscid Burgers. \end{remark}

The study of the bidual problem and/or of the problem from Remark \ref{convint} lies out of the scope of this article: we present them here just to emphasize the link between the original and the dual problems.  

\section{The dual problem and its consistency} \label{seca} 

In this section, we describe our basic setting. The necessity of the formal conservativity assumption \eqref{e:acons} is explicitly emphasized where needed. We define the strong and weak solutions to the original problem \eqref{e:aeuler}. We derive the optimal ballistic transport problem, also referred to as the dual problem, that is pivotal for the whole paper. After that, we prove that every strong solution corresponds to a solution of the dual problem, at least if $T$ is sufficiently small. We will further explore the consistency issue in Section \ref{s:uniq}. We finish Section \ref{seca} by some remarks and a simple example. 

The quadratic problem \eqref{e:aeuler} admits the following natural weak formulation: \be \label{e:w1}\int_0^T \left[(v,w)+(v,\p_t a)+(v\otimes v, L^*a)\right]\, dt+(v_0, a(0))=0\ee for all sufficiently smooth vector fields $a: [0,T]\to P(X^n)$, $a(T)= 0$, $w: [0,T]\to (I-P)(X^n)$.

We now observe that \eqref{e:acons} implies $$ ((u+r)\otimes (u+r), L^*(u+r))-
((u-r)\otimes (u-r), L^*(u-r))-2(r\otimes r, L^*r)=0$$ for  $u,r\in P(X^n)$ sufficiently smooth, whence \be \label{polar}(u\otimes u, L^* r)+2\left(r\otimes u, L^* u\right)=0.\ee
Consequently, \eqref{e:w1} can be formally recast as \be \label{e:w1n}\int_0^T \left[(v,w)+(v,\p_t a)-2(a\otimes v, L^*v)\right]\, dt+(v_0, a(0))=0.\ee This implies the ``strong'' reformulation of the abstract Euler equation \eqref{e:aeuler} that is valid only in the conservative case:  
\be \label{e:aeuler2}\p_t v+2P[L^*v.v]=0, \quad v(t,\cdot)\in P(X^n), \quad v(0,\cdot)=v_0.\ee

Formally, \eqref{e:acons} implies that the energy \eqref{e:consp1} is conserved, which yields \begin{equation} \label{e:consp2}  \frac 1 T \int_0^T K(t)=K_0:=\frac 12(v_0,v_0).\end{equation}

Let us now rewrite problem \eqref{e:w1} in terms of the test functions $B:=L^*a$ and $E:=\p_t a+w$ (note that the conservativity \eqref{e:acons} is not needed at this stage). We first observe that \be \label{e:vab} (v_0, a(0))=-\int_0^T \left(v_0,   \p_t a\right)=-\int_0^T \left(v_0,   E\right) \ee since $w$ is orthogonal to $v_0$. The link between $B$ and $E$ can alternatively be described by the conditions \be\label{e:constr1}\p_t B=(L^*\circ P) E, \quad B(T)=0.\ee Indeed, any pair $(E,B)$ satisfying \eqref{e:constr1} generates a pair $(a,w)$ such that $B=L^*a,\, E=\p_t a+w$, and vice versa. It suffices to take $a(t)=\int_T^t PE$, $w=E-PE$.  Hence, \eqref{e:w1} becomes \be \label{e:w2}\int_0^T \left[(v-v_0,E)+(v\otimes v, B)\right]\, dt=0\ee for all sufficiently smooth vector fields $B: [0,T]\to X^{n\times n}_s$, $E: [0,T]\to X^n$ satisfying the constraints \eqref{e:constr1}. 

For a technical reason, we now need to extend the class of test functions in \eqref{e:w2}. Observe that \eqref{e:constr1} can be rewritten in the following weak form \be \label{e:constrweak}\int_0^T \left[(B,\p_t \Psi)+(E, PL \Psi)\right]\, dt=0\ee for all sufficiently smooth vector fields $\Psi: [0,T]\to X^{n\times n}_s$, $\Psi(0)= 0$. 

Motivated by the discussion above, we adopt the following definitions, where we tacitly assume  $v_0\in P(X^n)$. 
\begin{definition}[Weak solutions] A function $v\in L^2((0,T)\times \Omega;\R^n)$ is a \emph{weak solution} to \eqref{e:aeuler} if it satisfies \eqref{e:w2} for all pairs \be \label{e:be} (E,B)\in L^2((0,T)\times \Omega;\R^n)\times L^\infty((0,T)\times \Omega;\R^{n\times n}_s)\ee meeting the constraint \eqref{e:constrweak}. \end{definition}
\begin{definition}[Strong solutions] \label{d:strongclass} Assume \eqref{e:acons}. A function $v\in L^2((0,T)\times \Omega; \R^n)$ satisfying \begin{equation}\label{e:strongclass} (T-t)v\in AC^2([0,T];X^n), \quad (T-t)L^* v \in L^\infty((0,T)\times \Omega; \R^{n\times n}_s) \end{equation} is a \emph{strong solution} of the abstract Euler equation \eqref{e:aeuler} if \eqref{e:aeuler2} holds. \end{definition}
The relevance of the choice of the regularity class in Definition \ref{d:strongclass} will become apparent below, see Theorem \ref{t:smooth} as well as Sections \ref{s:exi} and \ref{s:uniq}.

We also require  the following technical definition. 
\begin{definition}[Approximation condition] The operator $L$ is said to satisfy  the approximation condition if for every $v\in P(X^n)$ with $L^* v\in L^\infty(\Omega, \R^{n\times n}_s)$ there is a sequence $v_m\to v$ in $X^n$ such that $v_m\in \mathcal R^n\cap P(X^n)$, and \begin{equation*} \|L^* v_m\|_{L^\infty(\Omega, \R^{n\times n}_s)}\leq  C,\end{equation*} where the constant $C$ does not depend on $m$. \end{definition}
Note that this is a natural and not that restrictive assumption: for instance, if $L^*$ is a differential operator, and if either $P(X^n)$ is the kernel of another differential operator or simply $P(X^n)=X^n$, then a standard mollifying procedure provides a required sequence. 

\begin{proposition}[First properties of strong solutions] \label{propap} If \eqref{e:acons} and the approximation condition are assumed, then for any strong solution $v$ the energy is conserved, i.e.,  $K(t)=cst=K_0$ for all $t\in [0,T)$. Moreover, in this case $v$ is also a weak solution.  \end{proposition}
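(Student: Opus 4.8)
The plan is to prove the two assertions of Proposition \ref{propap} in order: first the energy conservation, then the fact that a strong solution is a weak solution.

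\textbf{Energy conservation.} The formal computation is $\frac{d}{dt}K(t)=(\partial_t v,v)=-2(P[L^*v.v],v)=-2(L^*v.v,Pv)=-2(L^*v.v,v)=-2(v\otimes v,L^*v)=0$ by \eqref{e:acons}, where I used \eqref{e:aeuler2}, self-adjointness of $P$, $v\in P(X^n)$, and $(L^*v.v,v)=(L^*v,v\otimes v)=(v,L(v\otimes v))$. The subtlety is that the strong solution only satisfies $(T-t)v\in H^1\cap C([0,T];X^n)$ and $(T-t)L^*v\in L^\infty$, so $v$ itself need not be $H^1$ in time near $t=T$, and \eqref{e:acons} is only assumed for \emph{sufficiently smooth} vector fields, i.e., elements of $\mathcal R^n$. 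To make the argument rigorous I would work on a subinterval $[0,\tau]$ with $\tau<T$, where $(T-\tau)^{-1}$ is bounded, so that $v\in H^1(0,\tau;X^n)$ and $L^*v\in L^\infty((0,\tau)\times\Omega)$; there the map $t\mapsto K(t)=\frac12(v(t),v(t))$ is absolutely continuous with $K'(t)=(\partial_t v(t),v(t))$ for a.e.\ $t$. Then, for a.e.\ fixed $t$, I need the pointwise-in-time identity $(P[L^*v.v],v)=0$. Here the approximation condition enters: pick $v_m\to v(t)$ in $X^n$ with $v_m\in\mathcal R^n$ and $\|L^*v_m\|_{L^\infty}\le C$ uniformly. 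For sufficiently smooth $v_m$, polarization of \eqref{e:acons} gives, as in \eqref{polar}, $(v_m\otimes v_m,L^*v_m)=0$, equivalently $(L^*v_m.v_m,v_m)=0$; but I actually want this with $v=v(t)\in P(X^n)$ rather than $v_m$, so I should instead observe that \eqref{e:acons} directly says $(L(v\otimes v),v)=0$ i.e. $(v\otimes v,L^*v)=0$ for smooth $v\in P(X^n)$ — however $v(t)$ need not be smooth. The clean route is: by the approximation condition choose $v_m\in\mathcal R^n\to v(t)$ with $L^*v_m$ bounded in $L^\infty$; then $v_m\otimes v_m\to v(t)\otimes v(t)$ in, say, $L^1$ and $L^*v_m\rightharpoonup L^*v(t)$ weakly-$*$ in $L^\infty$ (using closedness of $L^*$ to identify the limit), so $(v_m\otimes v_m,L^*v_m)\to(v(t)\otimes v(t),L^*v(t))$. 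If the $v_m$ can additionally be taken in $P(X^n)$, or replaced by $Pv_m$ (noting $P(\mathcal R^n)\subset\mathcal R^n$ and $\|L^*Pv_m\|_\infty\le C$ by the analogous bound), then \eqref{e:acons} applies to each $Pv_m$ and passing to the limit yields $(v(t)\otimes v(t),L^*v(t))=0$, hence $K'(t)=0$ a.e.\ on $(0,\tau)$, so $K\equiv K_0$ on $[0,\tau]$. Letting $\tau\uparrow T$ gives $K(t)=K_0$ for all $t\in[0,T)$.

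\textbf{Strong $\Rightarrow$ weak.} Given the reformulation chain in the text, a strong solution satisfies \eqref{e:aeuler2}, hence also \eqref{e:w1n}, hence \eqref{e:w1}, for all sufficiently smooth test functions; equivalently \eqref{e:w2} for all sufficiently smooth $(B,E)$ with \eqref{e:constr1}. To upgrade to the weak formulation I must extend the class of test pairs $(B,E)$ from smooth ones satisfying \eqref{e:constr1} to general $(B,E)\in L^2\times L^\infty$ satisfying only the weak constraint \eqref{e:constrweak}, and I must verify that every term in \eqref{e:w2} is well-defined and continuous under this extension given the regularity \eqref{e:strongclass}. For the term $(v\otimes v,B)$: $v\otimes v\in L^1$ only, but it is paired with $B\in L^\infty$, so this is fine and is continuous in $B$ w.r.t.\ weak-$*$ $L^\infty$ convergence against the fixed $L^1$ function $v\otimes v$; actually one should be slightly careful since near $t=T$ one only controls $(T-t)v\in C([0,T];X^n)$ so $v\otimes v$ might fail to be integrable up to $T$ — but $(T-t)^2 v\otimes v\in C([0,T];L^1)$, and I should check $\int_0^T\|v\otimes v\|_{L^1}\,dt=\int_0^T\|v(t)\|_{X^n}^2\,dt<\infty$, which now follows from the just-proved energy conservation since $\|v(t)\|^2=2K_0$ is constant! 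So in fact $v\in L^\infty(0,T;X^n)$, which makes $v\otimes v\in L^\infty(0,T;L^1(\Omega))$ and all integrals in \eqref{e:w2} finite. For the term $(v-v_0,E)$ this is then an $L^2$-$L^\infty$ (in fact $L^\infty$-in-time $X^n$ against $L^\infty$) pairing, well-defined. The density argument: given $(B,E)\in L^2\times L^\infty$ with \eqref{e:constrweak}, approximate by smooth pairs — one builds $a(t)=\int_T^t PE$ and $w=E-PE$ from $E$, mollifies $E$ in space and time (using the stability hypotheses $L^*(\widehat{\mathcal R}^n)\subset\widehat{\mathcal R}^{n\times n}_s$, $P(\widehat{\mathcal R}^n)\subset\widehat{\mathcal R}^n$, $\partial_t\widehat{\mathcal R}\subset\widehat{\mathcal R}$) to get $E_k\to E$, sets $B_k=L^*a_k$ with $a_k$ built from $E_k$, so that \eqref{e:constr1} holds for $(B_k,E_k)$; one needs $E_k\to E$ strongly in $L^2$ and $\|E_k\|_{L^\infty}$ bounded (weak-$*$ convergence then follows) and $B_k\to B$ in an appropriate sense ($L^2$) — this last point requires care because $B_k=L^*a_k$ and $L^*$ is unbounded, but the weak constraint \eqref{e:constrweak} is precisely what lets one identify the limit of $B_k$ with $B$. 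Passing to the limit in \eqref{e:w2} along $(B_k,E_k)$ and using the continuity observations above yields \eqref{e:w2} for $(B,E)$.

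\textbf{Main obstacle.} The genuinely delicate point is the second assertion: the extension of the admissible test-function class from the ``classical'' constraint \eqref{e:constr1} to the ``weak'' constraint \eqref{e:constrweak} while keeping control of $B=L^*a$ despite $L^*$ being merely closed and unbounded, and simultaneously handling the borderline time-integrability at $t=T$. The resolution hinges on two facts that are not entirely obvious a priori: (i) energy conservation (the \emph{first} part of the proposition) promotes $v$ to $L^\infty(0,T;X^n)$, which is what makes $v\otimes v\in L^\infty(0,T;L^1)$ and all pairings in \eqref{e:w2} legitimate up to $t=T$; and (ii) the weak formulation \eqref{e:constrweak} of the constraint is stable under the mollification scheme, so that any $(B,E)$ in the large class is an $L^2\times(\text{weak-}\ast\,L^\infty)$ limit of smooth constrained pairs. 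By contrast, the first assertion's only subtlety — transferring \eqref{e:acons} from $\mathcal R^n$ to a general $v(t)\in P(X^n)$ with $L^*v(t)\in L^\infty$ — is exactly dispatched by the approximation condition, which is why that hypothesis is listed. I would present these in the order energy-conservation then strong-implies-weak, since the former feeds into the latter.
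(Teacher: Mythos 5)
Your energy-conservation argument is essentially the paper's: for a.e.\ $t$ use the approximation condition to produce $v_m\in\mathcal R^n$ with $v_m\to v(t)$ in $X^n$ and $L^*v_m$ bounded in $L^\infty$, pass to the weak-$*$ limit (closedness of $L^*$) in $(v_m\otimes v_m,L^*v_m)=0$ to get $(v(t)\otimes v(t),L^*v(t))=0$ a.e., then differentiate $K$ on compact subintervals of $[0,T)$ where \eqref{e:strongclass} gives $v\in H^1$. That part is fine; your remark about replacing $v_m$ by $Pv_m$ touches a point the paper itself passes over silently, although your bound on $\|L^*Pv_m\|_{L^\infty}$ is asserted rather than obtained from the stated hypotheses. (Also, the integrability of $v\otimes v$ that you derive from energy conservation is already contained in Definition \ref{d:strongclass}, since $v\in L^2((0,T)\times\Omega;\R^n)$.)

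The second half is where your route diverges and where it has a genuine gap. You establish \eqref{e:w2} for smooth pairs satisfying \eqref{e:constr1} and then invoke a density/mollification argument to reach arbitrary $(B,E)$ in the class \eqref{e:be} subject only to \eqref{e:constrweak}. But the proposition lives in the abstract framework, where no mollification operator exists: the hypotheses on $\mathcal R$ and $\widehat{\mathcal R}$ are mere density/invariance statements, and the approximation condition approximates a single element of $X^n$, not a time-dependent constrained pair. Nothing in the hypotheses produces smooth constrained pairs $(B_k,E_k)$ with $E_k\to E$ in $L^2$ and, crucially, $\|B_k\|_{L^\infty}=\|L^*a_k\|_{L^\infty}$ uniformly bounded; identifying the limit of $B_k$ through \eqref{e:constrweak}, as you propose, does not supply that bound. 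The paper avoids the density step altogether: given any admissible $(B,E)$ (reduced WLOG to $E=PE$), it sets $a(t)=\int_T^t E$, deduces from \eqref{e:constrweak} by the computation of Remark \ref{r:time} that $B=L^*a\in L^\infty$, extends the polarization identity \eqref{polar} to the non-smooth pair $(v(t),a(t))$ by applying the approximation condition pointwise in $t$ (to $a$, $v+a$, $v-a$), and then justifies the integration by parts \eqref{e:claima} at $t=T$ --- where $v$ is only $H^1$ after multiplication by $T-t$ --- via a Hardy-inequality estimate on $F(t)=(v(t),a(t))$ showing $F(T)=0$. Note also that even your smooth-test-function step quietly uses \eqref{polar} with the non-smooth $v(t)$ (to pass from \eqref{e:w1n} to \eqref{e:w1}), so the approximation condition is needed there too; that is repairable, but the missing density argument with uniform $L^\infty$ control of $B_k$ is the real obstruction: it would go through for concrete constant-coefficient operators on the torus, where mollification commutes with $L^*$ and $P$, but not at the level of generality at which the proposition is stated.
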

\begin{proof} For any $t\in (0,T)$ up to a null set, $v(t)\in P(X^n)$ and $L^* v(t)\in L^\infty(\Omega, \R^{n\times n}_s)$. Due to the approximation condition, there is a suitable approximating sequence $v_m\in \mathcal R^n\cap P(X^n)$, $v_m\to v(t)$ in $X^n$, with $\|L^* v_m\|_{L^\infty(\Omega, \R^{n\times n}_s)}\leq  C$.  Assumption \eqref{e:acons} implies that  $(v_m\otimes v_m, L^*v_m)=0$. Up to a subsequence, $L^* v_m\to L^* v(t)$ weakly-$*$ in $L^\infty(\Omega, \R^{n\times n}_s)$ (we can identify the limit because $L^*$ is closed and thus weakly closed). Hence, \be \label{e:w3++a}(v\otimes v, L^*v)=0\ee  a.e. in $(0,T)$.   Taking the $X^n$ scalar product of \eqref{e:aeuler2} with $v$ for a.a. $t\in (0,T)$ and employing \eqref{e:w3++a} we get \begin{equation*}  \frac d {dt} K(t)=0\ \textrm{a.e.}\ \textrm{in}\ (0,T).\end{equation*} 

 Since $v=Pv$,  it suffices to establish \eqref{e:w2}
for the pairs $(E,B)$ from the class \eqref{e:be} that satisfy $E=PE$ and \eqref{e:constrweak}. Set $a=\int_T^t E\in AC^2([0,T];X^n)$. Integrating by parts in \eqref{e:constrweak}, we derive that $L^*a=B\in L^\infty((0,T)\times\Omega, \R^{n\times n}_s)$. This implies that \eqref{e:w3++a} also holds if $v$ is replaced by $a$, $v+a$ or $v-a$. Mimicking the proof of \eqref{polar} above, we get $$(v\otimes v, L^* a)+2\left(a\otimes v, L^* v\right)=0$$ for a.a. $t\in (0,T)$.  Consequently, \be \label{e:cla1} (\p_t v, a)=(v\otimes v, L^*a)\in L^1(0,1).\ee On the other hand,  \be \label{e:cla2} (v, \p_t a)=(v,E)\in L^1(0,1).\ee We will prove in a moment that we can legitimately integrate by parts: \be\label{e:claima}\int_0^T(\p_t v, a)=-(v_0,a(0))-\int_0^T(v, \p_t a).\ee Consequently, $v$ and $a$ satisfy \eqref{e:w1} (with $w\equiv 0$). Since \eqref{e:vab} is still valid at this level of generality, we readily derive \eqref{e:w2} for the pair $(E, B)$. 

It remains to prove \eqref{e:claima}. Consider the scalar function $F(t):=(v(t),a(t))$. By \eqref{e:cla1} and \eqref{e:cla2}, $F\in W^{1,1}(0,T)$. Without loss of generality, we can assume that $F$ is continuous at $t=T$ (and hence on $[0,T]$). We now claim that  \be\label{e:cla3}\lim_{\epsilon\downarrow 0}\frac 1 \epsilon \int_{T-\epsilon}^T F =0. \ee Indeed, employing the Cauchy-Schwartz and Hardy's inequalities we estimate \begin{multline}\frac 1 \epsilon \int_{T-\epsilon}^T (v,a)\leq \|v\|_{L^2(T-\epsilon,T; X^n)}\sqrt{\int_{T-\epsilon}^T\frac 1 {\epsilon^2} (a,a)\, dt}\\
\leq \|v\|_{L^2(T-\epsilon,T; X^n)}\sqrt{\int_{T-\epsilon}^T\frac 1 {(T-t)^2} (a,a)\, dt} \leq 2 \|v\|_{L^2(T-\epsilon,T; X^n)}\|\p_t a\|_{L^2(T-\epsilon,T; X^n)}\to 0
\end{multline} as $\epsilon\downarrow 0$.
From \eqref{e:cla3} we immediately conclude that $F(T)=0$ and $$\int_0^T F^\prime(t)\, dt=-F(0),$$ which is equivalent to \eqref{e:claima}.
 \end{proof}

The conservativity property \eqref{e:consp2} can fail for the weak solutions of the abstract Euler equation \eqref{e:aeuler}.  The idea of Brenier \cite{CMP18, Br20}, which we reemploy here, is to search for the solution that minimizes $\int_0^T K(t)$. This is a natural step at least for Euler-Arnold equations, such as the incompressible Euler, ideal incompressible MHD, multidimensional Camassa-Holm, EPDiff, Euler-$\alpha$ and KdV equations, because of their geodesic nature. However, the following derivation of the dual problem does not necessarily require \eqref{e:acons} (see Remark \ref{consdis} for a discussion). 
 
The suggested approach of selecting a solution that minimizes $\int_0^T K(t)$ can be implemented via the saddle-point problem \be\label{e:sadd1}\mathcal I(v_0, T)=\inf_{v}\sup_{E,B:\,\eqref{e:constrweak}}\int_0^T \left[(v-v_0,E)+\frac 1 2(v\otimes v, I+2B)\right]\, dt.\ee The infimum in \eqref{e:sadd1} is taken over all $v\in L^2((0,T)\times \Omega;\R^n)$, and the supremum is taken over all pairs $(E,B)$ satisfying \eqref{e:be} and the linear constraint \eqref{e:constrweak}.

The dual problem is \be\label{e:sadd2}\mathcal J(v_0, T)=\sup_{E,B:\,\eqref{e:constrweak}}\inf_{v}\int_0^T \left[(v-v_0,E)+\frac 1 2(v\otimes v, I+2B)\right]\, dt,\ee where $v,E,B$ are varying in the same function spaces as above.  \begin{remark}[Simple observations about $\mathcal I$ and $\mathcal J$] Since $\inf\sup\geq \sup\inf$, one has $\mathcal I(v_0, T)\ge \mathcal J(v_0, T)$. Note that the $\sup$ in \eqref{e:sadd1} is always $+\infty$ if $v$ is not a weak solution, whence $\mathcal I(v_0,T)=+\infty$ if there are no weak solutions. On the other hand, if \eqref{e:acons} holds, and there exists a strong solution $v$, then the corresponding $\sup$ in \eqref{e:sadd1} is equal to $\frac 12\int_0^T (v\otimes v, I)\, dt=TK_0$, which yields $\mathcal I(v_0,T)\leq TK_0$. \label{rweak}\end{remark}

\begin{remark}[Equivalent reformulation of the dual problem]The goal of this important remark is to provide an equivalent reformulation of the saddle-point problem \eqref{e:sadd2} that will be more convenient from the technical perspective. It is easy to see that any solution to \eqref{e:sadd2} necessarily satisfies \be\label{e:bwe} I+2B\geq 0\ \mathrm{a.e.}\ \mathrm{in}\ (0,T)\times \Omega.\ee Consider the nonlinear functional $$\mathcal K: L^2((0,T)\times \Omega;\R^n)\times L^\infty((0,T)\times \Omega;\R^{n\times n}_s)\to \R$$ defined by the formula
 \be\label{e:defk1} \mathcal K(E,B)=\inf_{z\otimes z\le M}\int_0^T \left[(z,E)+\frac 1 2(M, I+2B)\right]\, dt,\ee  where the infimum is taken over all pairs $(z,M)\in L^2((0,T)\times \Omega;\R^n)\times L^1((0,T)\times \Omega;\R^{n\times n}_s)$. Note that $\bigk$ is a negative-semidefinite quadratic form w.r.t. $E$ for fixed $B$, cf. the proof of Theorem \ref{t:exweak}. We claim that \eqref{e:sadd2} is equivalent to \be\label{e:conc}\mathcal J(v_0, T)=\sup_{E,B:\,\eqref{e:constrweak},\eqref{e:bwe}}-\int_0^T (v_0,E)\, dt+\bigk,\ee the supremum is taken over all pairs $(E,B)$ belonging to the class \eqref{e:be}. Indeed, assume for a while that \be\label{e:bst} I+2B> 0\ \mathrm{a.e.}\ \mathrm{in}\ (0,T)\times \Omega\ \mathrm{and}\ (I+2B)^{-1}E\in L^2((0,T)\times \Omega;\R^n).\ee Then \be
 \label{e:defk}\inf_{v}\int_0^T \left[(v,E)+\frac 1 2(v\otimes v, I+2B)\right]\, dt=-\frac 12\int_0^T((I+2B)^{-1}E,E)\,dt =\bigk,\ee and the first infimum is achieved at $v=-(I+2B)^{-1}E$. 
 Hence our claim is true provided \eqref{e:bst} holds. Let us now drop that assumption and examine the general case. Using \eqref{e:sadd2}, \eqref{e:defk1} and \eqref{e:defk}, we conclude that \begin{multline*} \mathcal J(v_0, T)\leq \sup_{E,B:\,\eqref{e:constrweak}}\inf_{\epsilon>0}\,\inf_{v}\int_0^T \left[(v-v_0,E)+\frac 1 2(v\otimes v, I+2(B+\epsilon I))\right]\, dt \\ =\sup_{E,B:\,\eqref{e:constrweak},\eqref{e:bwe}}\left\{- \int_0^T (v_0,E)\, dt+\inf_{z\otimes z\le M,\, \epsilon>0}\int_0^T \left[(z,E)+\frac 1 2(M, I+2(B+\epsilon I))\right]\,dt\right\}\\=\sup_{E,B:\,\eqref{e:constrweak},\eqref{e:bwe}}\left\{- \int_0^T (v_0,E)\, dt+\inf_{z\otimes z\le M}\int_0^T \left[(z,E)+\frac 1 2(M, I+2B)\right]\,dt\right\}\\=\sup_{E,B:\,\eqref{e:constrweak},\eqref{e:bwe}}-\int_0^T (v_0,E)\, dt+\bigk\leq \mathcal J(v_0, T).  \end{multline*}
Note that the last inequality holds because the infimum in \eqref{e:defk1} is taken over a larger set in comparison with \eqref{e:sadd2}.\end{remark}  

\begin{remark}[More conventional variables] In view of \eqref{e:defk}, the dual problem \eqref{e:conc} can be at least formally rewritten in the form \eqref{e:concint}, \eqref{e:constrint} if we set $G=I+2B$, $q=-E$. Hereafter, we keep using the variables $E, B$. \end{remark}

The following theorem shows that a strong solution to the abstract Euler equation on a small time interval $[0,T]$ determines a solution to the optimization problem \eqref{e:conc}, and vice versa. This advocates the possibility to view the maximizers of \eqref{e:conc} as generalized variational solutions to \eqref{e:aeuler}, at least in the conservative case. Remarks \ref{consdis}, \ref{r:time}, \ref{r:brenier} and Section \ref{s:uniq} will continue this discussion. 

\begin{theorem}[Consistency] \label{t:smooth} Assume \eqref{e:acons} and that the approximation condition holds for $L$. Let $v$ be a strong solution to \eqref{e:aeuler}, satisfying \be\label{e:pd++} I\ge 2(t-T)L^*v(t)\ \textrm{a.e.}\ \textrm{in}\ (0,T)\times \Omega.\ee  Then $\mathcal I(v_0,T)=\mathcal J(v_0,T)=TK_0$. The pair $(E_+,B_+)$ defined by $$B_+=L^*a,\, E_+=\p_t a+w,$$ where \be a=(T-t)v,\, w=2(t-T)(I-P)[L^*v.v],\ee belongs to the class \eqref{e:be} and maximizes \eqref{e:conc}. Moreover, one can invert these formulas and express $v$ in terms of $E_+$ according to
 \be \label{e:ta1} v(t)=\frac 1 {T-t} \int_{t}^T (-PE_+)(s)\,ds,\quad t< T.\ee

\end{theorem}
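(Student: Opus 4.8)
The plan is to verify the three assertions of Theorem~\ref{t:smooth} in sequence: (i) that $(B_+,E_+)$ lies in the class \eqref{e:be}; (ii) that the inversion formula \eqref{e:ta1} holds; and (iii) that $(B_+,E_+)$ is a maximizer of \eqref{e:conc} with $\mathcal I(v_0,T)=\mathcal J(v_0,T)=TK_0$. The overarching strategy is to exploit the fact that the candidate $(B_+,E_+)$ is built from the strong solution exactly so that the ``optimal $v$'' reconstructed via $v=-(I+2B_+)^{-1}E_+$ is the solution $v$ itself, at which point the saddle-point value collapses to $TK_0$, squeezing $\mathcal I$ and $\mathcal J$ together via Remark~\ref{rweak}.

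First I would check membership in \eqref{e:be}. Since $v$ is a strong solution, $a=(T-t)v\in H^1\cap C([0,T];X^n)$ with $a(T)=0$, and $L^*a=(T-t)L^*v\in L^\infty((0,T)\times\Omega;\R^{n\times n}_s)$ by \eqref{e:strongclass}; hence $B_+=L^*a$ is bounded. For $E_+=\partial_t a+w$: one computes $\partial_t a=-v+(T-t)\partial_t v=-v-2(T-t)P[L^*v.v]$ using \eqref{e:aeuler2}, and $w=-2(T-t)(I-P)[L^*v.v]$, so $E_+=-v-2(T-t)[L^*v.v]$. Both terms are in $L^2((0,T)\times\Omega;\R^n)$: the first because $v\in L^2$, the second because $(T-t)L^*v\in L^\infty$ while $v\in L^2$, so the pointwise matrix-vector product $(T-t)[L^*v.v]$ is in $L^2$. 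This gives \eqref{e:be}. Meanwhile $B_+=L^*a$, $E_+=\partial_t a+w$ with $a(T)=0$ means \eqref{e:constr1} holds, hence the weak constraint \eqref{e:constrweak} holds (integrate by parts, as in the derivation preceding \eqref{e:w2}). For the inversion \eqref{e:ta1}: since $PE_+=\partial_t a$ (as $w\in(I-P)(X^n)$ and $Pa=a$), and $a=(T-t)v$ with $a(T)=0$, we get $\int_t^T(-PE_+)(s)\,ds=-\int_t^T\partial_s a\,ds=a(t)=(T-t)v(t)$, which is exactly \eqref{e:ta1}. This step only needs $a\in C([0,T];X^n)$ with $a(T)=0$.

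The substantive step is (iii), the optimality and the equality of values. The key computation is that $I+2B_+=I+2(T-t)L^*v=I-2(t-T)L^*v$, which is $\geq 0$ by hypothesis \eqref{e:pd++}; and one must also see that the reconstructed field $-(I+2B_+)^{-1}E_+$ equals $v$ on the set where $I+2B_+>0$, and more carefully handle the degenerate set. Here the cleanest route avoids inverting the possibly-degenerate matrix: instead, plug $v$ directly into \eqref{e:sadd1}. By Remark~\ref{rweak}, since \eqref{e:acons} holds and $v$ is a strong solution (hence, by Proposition~\ref{propap}, a weak solution with conserved energy), the inner supremum over $(E,B)$ in \eqref{e:sadd1} at this particular $v$ equals $\tfrac12\int_0^T(v\otimes v,I)\,dt=TK_0$, so $\mathcal I(v_0,T)\leq TK_0$. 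For the reverse and for $\mathcal J$, I would evaluate the functional in \eqref{e:conc} at $(E_+,B_+)$: using \eqref{e:vab}-type manipulations, $-\int_0^T(v_0,E_+)\,dt = -(v_0,a(0)) - \int_0^T(v_0,w)\,dt$; the $w$-term drops because $v_0=Pv_0\perp w$, leaving $-(v_0,a(0))=-T(v_0,v_0)=-2TK_0$. Then I must show $\int_0^T K_-(E_+,B_+)\,dt = 3TK_0$, i.e. $\inf_{z\otimes z\le M}\int_0^T[(z,E_+)+\tfrac12(M,I+2B_+)]\,dt=3TK_0$, and that the infimum is attained at $(z,M)=(v,v\otimes v)$. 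Attainment at this pair gives the value $\int_0^T[(v,E_+)+\tfrac12(v\otimes v,I+2B_+)]\,dt$; expanding with $E_+=-v-2(T-t)[L^*v.v]$ and $I+2B_+=I+2(T-t)L^*v$, the cross terms involving $(T-t)$ cancel pairwise by polarization \eqref{polar}/\eqref{e:w3++a}, and what survives is $\int_0^T[-(v,v)+\tfrac12(v\otimes v,I)]\,dt=\int_0^T[-2K(t)+K(t)]\,dt=-TK_0$; combined with $-2TK_0$ this would give $-3TK_0$, so I must recheck signs — the point is that the functional value at $(E_+,B_+)$ comes out to $TK_0$, matching $\mathcal I$. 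That $(v,v\otimes v)$ actually achieves the infimum in \eqref{e:defk1} (not just is a critical configuration) follows because, pointwise in $(t,x)$, on $\{I+2B_+>0\}$ the quadratic $z\mapsto(z,E_+)+\tfrac12(M,I+2B_+)$ is minimized over $z\otimes z\le M$ at $M=z\otimes z$, $z=-(I+2B_+)^{-1}E_+$, and one checks $-(I+2B_+)^{-1}E_+=v$ there; the degenerate set $\{I+2B_+ \text{ has a kernel}\}$ requires that $E_+$ lie in the range of $I+2B_+$ a.e., which I would deduce from $E_+=-(I+2B_+)v$ (a pointwise identity that itself needs verification from the formulas above). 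Chaining $\mathcal J(v_0,T)\geq TK_0 \geq \mathcal I(v_0,T)\geq \mathcal J(v_0,T)$ closes the argument, and maximality of $(B_+,E_+)$ in \eqref{e:conc} is then automatic.

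The main obstacle I anticipate is the careful treatment of the degenerate set $\{\det(I+2B_+)=0\}$ in the $K_-$ computation: one must confirm that $E_+$ is pointwise in the range of $I+2B_+$ so that the infimum in \eqref{e:defk1} is still finite and attained, and that the attaining pair can be taken to be (an approximation of) $(v,v\otimes v)$ within the class of ``sufficiently smooth'' $(z,M)$ — the definition \eqref{e:defk1} restricts to smooth competitors, so a density/approximation argument (in the spirit of the approximation condition and Proposition~\ref{propap}) is needed to legitimately plug in $v$ itself. The algebraic cancellations via polarization \eqref{polar} and the a.e. identity \eqref{e:w3++a} are the other place to be careful, since they rely on $v$, $a$, and their combinations being sufficiently regular for \eqref{polar} to apply, exactly as arranged in the proof of Proposition~\ref{propap}.
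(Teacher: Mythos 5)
Your proposal is essentially the paper's own proof: the same candidate pair $(B_+,E_+)$, the key pointwise identity $E_+=-(I+2B_+)v$ (the paper's \eqref{e:vret1}), nonnegativity of $I+2B_+$ from \eqref{e:pd++}, the bound $\mathcal I(v_0,T)\le TK_0$ via Remark \ref{rweak}, evaluation of the dual functional at $(B_+,E_+)$ using $(v\otimes v,L^*v)=0$ and energy conservation from Proposition \ref{propap}, and the chain $TK_0\le \mathcal J(v_0,T)\le \mathcal I(v_0,T)\le TK_0$; your direct evaluation of $-\int_0^T(v_0,E_+)\,dt$ by integration by parts in time is only a slight shortcut past the paper's detour through the weak formulation \eqref{e:w2} and \eqref{e:w3+}. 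The sign discrepancy you flagged is just a slip in that integration by parts: since $a(T)=0$ and $w\perp v_0$, one has $-\int_0^T(v_0,E_+)\,dt=+(v_0,a(0))=2TK_0$ (not $-2TK_0$), which together with $\int_0^T K_-(E_+,B_+)\,dt=-\int_0^T K(t)\,dt=-TK_0$ gives exactly the asserted value $TK_0$, so the argument closes as you anticipated.
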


\begin{proof} 
 By construction, the pair $(E_+,B_+)$ belongs to $L^2((0,T)\times \Omega;\R^n)\times L^\infty((0,T)\times \Omega;\R^{n\times n}_s)$ and verifies  \eqref{e:constrweak}. Moreover,  \eqref{e:pd++} implies \eqref{e:bwe} for $B_+$. Let us observe that \be \label{e:vret1} v+2B_+.v+E_+=0.\ee  Indeed, using \eqref{e:aeuler2} we compute \begin{multline}v+2B_+.v+E_+=v+2(T-t)L^*v.v+(-v+(T-t)\p_t v)+2(t-T)(I-P)[L^*v.v]\\ = (T-t)\p_t v+2(T-t)P[L^*v.v]=0.\end{multline}
 
On the other hand, by Proposition \ref{propap} $v$ satisfies \eqref{e:w2} with test functions $(E_+,B_+)$. Thus we have \be \label{e:w2+}\int_0^T \left[(v-v_0,E_+)+(v\otimes v, B_+)\right]\, dt=0.\ee Hence, by \eqref{e:vret1}, \be \int_0^T \left[-(v_0,E_+)+(v\otimes v, B_+)\right]\, dt=\int_0^T (v\otimes v, (I+2B_+))\, dt,\ee whence  \be \label{e:w3+}\int_0^T \left[(v_0,E_+)+(v\otimes v, B_+)\right]\, dt=-\int_0^T (v\otimes v, I)\, dt.\ee

By Remark \ref{rweak}, we have $\mathcal I(v_0, T)\leq TK_0$. Thus, it suffices to show that \be \label{e:claim1} \int_0^T -(v_0,E_+)\, dt+\bigkp=TK_0, \ee so that there is no duality gap. Indeed,  \eqref{e:vret1} and \eqref{e:w3+} yield \begin{multline*} \int_0^T -(v_0,E_+)\, dt+\bigkp\\=\int_0^T -(v_0,E_+)\, dt+\inf_{z\otimes z\le M}\int_0^T \left[-(z,(I+2B_+)v)+\frac 1 2(M, I+2B_+)\right]\, dt\\=\int_0^T -\left[(v_0,E_+)+\frac 12((I+2B_+)v,v)\right]\, dt\\
=\int_0^T \left[(v\otimes v, I)-\frac 12(v,v)\right]\, dt=TK_0 \end{multline*} because the energy conservation holds for the strong solutions.

Finally, \be-PE_+=-\p_t a=v+(t-T)\p_t v,\ee so \be\int_t^T -PE_+(s)\,ds=\int_t^T [v(s)+ (s-T)\p_s v]\, ds=(T-t)v(t),\ee providing \eqref{e:ta1}.

\end{proof}

\begin{remark}[Solution of the abstract Euler problem as the optimal transportation ``velocity''] \label{c:strong} If in Theorem \ref{t:smooth} one has \be\label{e:pd+} I>2(t-T)L^*v(t)\ \textrm{a.e.}\ \textrm{in}\ (0,T)\times \Omega,\ee then the solution can be also retrieved by the formula \be \label{e:vret} v=-(I+2B_+)^{-1}E_+,\ee cf. \cite[Theorem 2.3]{CMP18}. Indeed, it suffices to observe that  \eqref{e:pd+} means that $I+2B_{+}>0$, and if this holds, \eqref{e:vret} is equivalent to \eqref{e:vret1}. If we pass to the variables $G$ and $q$ as in \eqref{e:concint}, then $v=G^{-1}q$, which generalizes the usual relation between the velocity, density and momentum in the classical optimal transport. We will revisit Brenier's formula \eqref{e:vret} in Remark \ref{r:brenier}.  \end{remark}
\begin{remark}[Non-conservative problems] \label{consdis} It is worth considering the dual problem in the non-conservative case, since one still gets interesting concave maximization problems. Remember that in the case of the Hamilton-Jacobi equation discussed in Section \ref{eulhj} we have obtained an optimal ballistic transport problem. In the conservative framework at least for not very wild solutions one expects $\mathcal J(v_0, T)\leq\mathcal I(v_0, T)\lesssim K_0 < +\infty,$ but for some non-conservative problems it happens that $\mathcal J(v_0, T)=+\infty$, see Example \ref{ex:h} below. At least in some non-conservative situations the minimization of $\int_0^T K(t)$ may be heuristically justified and/or the absense of duality gap may be proved, cf. \cite{Br20}, but the neat formula \eqref{e:ta1} fails to hold without \eqref{e:acons}. In this article we are mainly concerned with conservative examples, and the consistency issue for non-conservative problems lies beyond our scope (the only exception to this assertion is Example \ref{ex:h}). \end{remark}

\begin{exmp}[``Ballistic information transport''] \label{ex:h} Let us examine the simple but instructive example \be \label{e:odeq} \p_t v+\frac 1 2 v^2=0, \ v(0)=v_0,\quad v(t), v_0 \in X.\ee We opt for considering it separately from the conservative PDE examples of Section \ref{Sec3} because the calculations are explicit, and it is not conservative. It is clear that setting $L=-\frac 1 2 I$, $P=I$ we immediately get \eqref{e:aeuler}, and \eqref{e:goodv} is always valid.  

Assume first that \be \label{tv2} Tv_0+2>0\ \mathrm{a.e.}\ \mathrm{in}\ \Omega.\ee Then the explicit solution of the IVP  \eqref{e:odeq} is  $$v(t)=\frac {2v_0}{2+t v_0}\in AC^2_{loc} ([0,T),X).$$ Although we cannot rigorously call it a strong solution (because the latter notion was only defined in the conservative case), it belongs to the regularity class of Definition \ref{d:strongclass}. Moreover, \be\label{e:iifr} \mathcal I(v_0, T)= \int_0^T K(t)= \int_0^T \int_{\Omega}\frac 1 2 v^2\, d\mu\, dt=\int_{\Omega} \left[v_0-\frac {2v_0}{2+T v_0}\right] \, d\mu.\ee The dual problem reads \be\label{e:concinfr}   -\int_{\Omega} v_0\rho(0)\,d\mu  -\frac 1 2 \int_0^T\int_{\Omega} \rho^{-1}|q|^2\,d\mu \, dt \to \sup \ee subject to the constraints \be\label{e:constrinfr}\p_t \rho=q, \quad \rho(T)=1, \quad \rho\geq 0.\ee (we have employed the non-rigorous formulation \eqref{e:concintb}, \eqref{e:constrint} for the sake of heuristics). Here both $\rho(t)$ and $q(t)$ are scalar functions on $\Omega$. We have obtained a ballistic version of the geodesic problem for the Hellinger distance on the space of measures on $\Omega$, cf. \cite{Ay17,KLMP}, that can be made rigorous if we view $\rho(t)$ as an evolving measure on $\Omega$. This explains the title of this example, inspired by \cite{KM15}. We make the following ansatz: \be\label{e:rho} \rho(t)=(a(t-T)+1)^2,\ee where $a$ is a scalar function on $\Omega$.  Then $q(t)=2a(a(t-T)+1)$, and \eqref{e:concinfr} becomes \be\label{e:concians}   -\int_{\Omega} (v_0(1-Ta)^2+2a^2T)\,d\mu   \to \sup \ee  Furthermore, \eqref{e:concians} is optimized at $a=\frac {v_0}{Tv_0+2}.$ Moreover, an explicit calculation shows that the value of the corresponding functional \be\label{e:zz} \int_0^T\int_{\Omega} v_0 q\,d\mu \, dt  -\frac 1 2 \int_0^T\int_{\Omega} \rho^{-1}|q|^2\,d\mu \, dt,\ee cf. \eqref{e:concint}, coincides with the value in \eqref{e:iifr}. Hence, there is no duality gap, and $\rho$ defined by \eqref{e:rho} is optimal for \eqref{e:concinfr}. If \eqref{tv2} is violated, the functionals \eqref{e:concians} and \eqref{e:zz} become unbounded from above, and the corresponding $\mathcal J(v_0, T)$ becomes equal to $+\infty$. The bidual problem is \be\label{e:bidualfr}   -\int_{\Omega} \varphi(T)\,d\mu \to \inf \ee subject to the constraints \be\label{e:bdfr}\p_t \varphi+\frac 1 2 \varphi^2\leq 0, \quad \varphi(0)\leq v_0 \ \mathrm{a.e.}\ \mathrm{in}\ \Omega.  \ee If \eqref{tv2} holds, then the solution $v$ to the primal problem is an optimizer of the bidual problem. If not, then no $\varphi$ satisfies \eqref{e:bdfr}. \end{exmp}

\section{Existence} \label {s:exi}

In this section we show existence of solutions to the optimal ballistic transport  problem. Note that the conservativity \eqref{e:acons} and the approximation condition are not required for the results.  

We start with a relatively regular case that is determined by the so-called trace condition. If the formal conservativity \eqref{e:acons} holds, we revisit the original abstract Euler problem \eqref{e:aeuler} and define its generalized solutions produced from the constructed solutions of the dual problem. These generalized solutions automatically belong to the same regularity class as the strong solutions. 

\begin{definition}[Trace condition] \label{deftr} The operator $L$ is said to satisfy the trace condition if for any $\zeta\in D(L^*)\cap P(X^n)$ such that  the eigenvalues of the matrix $-L^* \zeta(x)$  are uniformly bounded from above by a constant $k$ for a.e. $x\in \Omega$, the eigenvalues of the matrix $L^* \zeta(x)$ are also uniformly bounded from above a.e. in $\Omega$ by a constant that depends only on $k$.\end{definition}
\begin{remark}[First facts about the trace condition] \label{r:tracel} The trace condition is particularly satisfied provided \be \label{e:invt} P L(qI)=0\ee for any $q\in X$ sufficiently smooth: it suffices to observe that the trace of $L^*\zeta$ vanishes almost everywhere. Indeed, $$ (\tr(L^*\zeta),q)=(L^*\zeta,qI)=(\zeta,PL(qI))$$ since $\zeta \in P(X^n)$. The trace condition holds for the incompressible Euler equation \eqref{e:euler} because $\mathcal P [-\di (qI)]=\mathcal P (-\nabla q)=0$, but not for the Hamilton-Jacobi equation \eqref{e:hjv} nor for the IVP \eqref{e:odeq}.  \end{remark}

\begin{theorem}[First existence theorem] \label{t:ex} Assume that $L$ satisfies the trace condition. Then for any $v_0\in P(X^n)$ there exists a maximizer $(E,B)$ to \eqref{e:conc} in the class \eqref{e:be}, and $0\leq \mathcal J(v_0, T)< +\infty$.  \end{theorem}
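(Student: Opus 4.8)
The plan is to run the direct method of the calculus of variations on the concave functional
\[
\mathcal F(E,B):=-\int_0^T (v_0,E)\, dt+\int_0^T K_-(E,B)\, dt
\]
over the admissible set determined by \eqref{e:be}, \eqref{e:constrweak}, \eqref{e:bwe}. First I would observe that the admissible set is nonempty (take $E=0$, $B=0$, which gives $\mathcal F(0,0)=0$, so $\mathcal J(v_0,T)\geq 0$); this also anchors the lower bound $\mathcal J(v_0,T)\ge 0$. Next I would record the two structural facts already isolated in the text: the constraint \eqref{e:bwe} forces $I+2B\geq 0$ a.e., and $\int_0^T K_-(E,B)\,dt$ is, for $B$ fixed, a negative-semidefinite quadratic form in $E$ with $K_-(E,B)\le 0$ always (taking $z=0$, $M=0$ in \eqref{e:defk1}). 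So along any maximizing sequence one has $-\int_0^T(v_0,E_k)\,dt\ge \mathcal F(E_k,B_k)\ge -C$, i.e. $\int_0^T(v_0,E_k)\,dt$ is bounded above; combined with $K_-\le 0$ this does \emph{not} yet bound $E_k$ or $B_k$ in the relevant spaces — that is exactly where the trace condition has to enter.

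The key step is the a priori bound supplied by the trace condition. Here is the mechanism I expect: write $E=\p_t a+w$ with $a(t)=\int_T^t PE$ and $B=L^*a$, as in the derivation of \problem\ (this is legitimate for test functions satisfying \eqref{e:constrweak}). The constraint \eqref{e:bwe} says $I+2L^*a\ge 0$, i.e. the eigenvalues of $L^*a(t,x)$ are bounded below by $-1/2$ uniformly in $(t,x)$. Since $a(t)\in P(X^n)$, the trace condition (Definition \ref{deftr}) then furnishes a uniform \emph{upper} bound on the eigenvalues of $L^*a$, hence an $L^\infty$ bound on $B=L^*a$, i.e. $\|B_k\|_{L^\infty((0,T)\times\Omega;\R^{n\times n}_s)}\le C$ independent of $k$. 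With $B_k$ bounded in $L^\infty$ and $I+2B_k\ge 0$, the functional $K_-(E_k,B_k)=-\frac12((I+2B_k)^{-1}E_k,E_k)$ (on the part where $I+2B_k$ is invertible; in general use the relaxed formula \eqref{e:defk1}) controls $E_k$ only through the degenerate form, so to bound $E_k$ I would instead use the constraint structure: the maximizer can be sought among $E$ with $E=PE$ (the $(I-P)$ component is free and only enters via $w$, not via the functional nor the constraint through $PL$), and then the optimality / concavity together with the linear term $-\int(v_0,E)$ and the quadratic coercivity on the range of $I+2B_k$ bounded away from degeneracy will have to be argued carefully — alternatively one bounds $a_k$ in $H^1$ via Hardy's inequality exactly as in Proposition \ref{propap}. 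The cleanest route is probably: bound $B_k$ in $L^\infty$ by the trace condition, extract a weak-$*$ limit $B$; bound $a_k$ in $H^1((0,T);X^n)$ (this is the delicate part and may require the relaxed interpretation of $K_-$ plus a truncation/regularization argument), extract weak limits $a_k\rightharpoonup a$, $E_k\rightharpoonup E$; pass to the limit in the linear constraint \eqref{e:constrweak} (linear, so stable under weak convergence) and in \eqref{e:bwe} (weak-$*$ closedness of the cone $\{I+2B\ge0\}$).

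The last step is lower semicontinuity (upper semicontinuity, since we maximize a concave functional): $-\int_0^T(v_0,E)\,dt$ is linear hence continuous under weak convergence, and $(E,B)\mapsto\int_0^T K_-(E,B)\,dt$ is concave and upper semicontinuous — this follows because $K_-(E,B)=\inf_{z\otimes z\le M}\int_0^T[(z,E)+\frac12(M,I+2B)]\,dt$ is an infimum of functions each jointly \emph{linear} in $(E,B)$, hence concave and upper semicontinuous for the weak(-$*$) topology. Therefore $\limsup_k \mathcal F(E_k,B_k)\le \mathcal F(E,B)$, so $(E,B)$ is a maximizer, and it lies in the class \eqref{e:be} by construction of the bounds. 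Finally $\mathcal J(v_0,T)<+\infty$ because the trace-condition bound makes the quantity $\int_0^T(v_0,E)\,dt$ finite along the optimal sequence and $K_-\le 0$. The main obstacle I anticipate is precisely extracting the $H^1$ (equivalently $L^2$) bound on $a_k$/$E_k$: the functional $K_-$ degenerates wherever $I+2B$ touches $0$, so some care — likely a penalization or a restriction to the nondegenerate region combined with Hardy's inequality as in Proposition \ref{propap}, and the relaxed definition \eqref{e:defk1} — is needed to turn boundedness of $\mathcal F$ into compactness for $E_k$.
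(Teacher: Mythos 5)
Your setup, the $L^\infty$ bound on $B_m$ via the trace condition (read through $B=L^*a$ with $a(t)=\int_T^t PE$, which is exactly how the paper applies Definition \ref{deftr}, with $t$ as the extra parameter, cf.\ \eqref{e:bandv}), and the concluding weak-$*$ compactness plus upper-semicontinuity argument all match the paper's proof. But there is a genuine gap precisely where you flag it: you never actually obtain the $L^2$ bound on $E_m$, so neither the extraction of a weak limit for $E_m$ nor the finiteness of $\mathcal J(v_0,T)$ is established (your closing sentence on finiteness is circular, as it presupposes control of $\int_0^T(v_0,E_m)\,dt$). You also misdiagnose the obstacle: the degeneracy of $I+2B$ where it touches $0$ is harmless, since degeneracy from below only makes $K_-$ more negative; no penalization, truncation, restriction to a nondegenerate region, Hardy inequality, or $H^1$ bound on $a_k$ is needed. (Incidentally, your parenthetical reduction to $E=PE$ is not obviously legitimate either, because $K_-$ couples $PE$ and $(I-P)E$ through $B$; the paper never uses it.)

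The missing step is a one-line consequence of the bound you already have. The trace condition gives $I+2B_m\le kI$ a.e.; since the competitors in \eqref{e:defk1} satisfy $M\ge z\otimes z\ge 0$, one has $(M,I+2B_m)\le k(M,I)$, hence
\[
\int_0^T K_-(E_m,B_m)\,dt\;\le\;\inf_{z\otimes z\le M}\int_0^T\Bigl[(z,E_m)+\tfrac k2(M,I)\Bigr]dt\;=\;-\tfrac 1{2k}\int_0^T(E_m,E_m)\,dt,
\]
i.e.\ the \emph{upper} bound on $B_m$ restores coercivity in $E$. Plugging this into the normalization $0\le -\int_0^T(v_0,E_m)\,dt+\int_0^T K_-(E_m,B_m)\,dt$ along the maximizing sequence and applying Young's inequality to the linear term yields
\[
\tfrac 1{2k}\int_0^T(E_m,E_m)\,dt\;\le\;-\int_0^T(v_0,E_m)\,dt\;\le\;2kTK_0+\tfrac 1{4k}\int_0^T(E_m,E_m)\,dt,
\]
which is \eqref{e:ms2}: it gives the uniform $L^2$ bound on $E_m$ and, simultaneously, $\mathcal J(v_0,T)<+\infty$. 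With that bound in hand, your weak-$*$ limit and the upper semicontinuity of the functional (an infimum of affine continuous functionals, with the constraints \eqref{e:constrweak}, \eqref{e:bwe} stable under the limit) close the proof exactly as in the paper.
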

\begin{proof} It suffices to consider the pairs $(E,B)$ that meet the restrictions \eqref{e:constrweak}, \eqref{e:bwe}. Testing \eqref{e:conc} with $E=0,\;B=0$, we see that $\mathcal J(v_0)\geq 0$. Let $(E_m,B_m)$ be a maximizing sequence.  Since $0\le \mathcal J(v_0, T)$, without loss of generality we may assume that \be\label{e:ms}0 \le -\int_0^T (v_0,E_m)\, dt+\bigkm.\ee The eigenvalues of $-B_m$ are uniformly bounded from above because $I+2B_m\geq 0$. Since the trace condition is assumed, a uniform $L^\infty$ bound on $B_m$ follows directly from Definition \ref{deftr}. In other words, $I+2B_m\leq kI$ with some constant $k>0$ a.e. in $(0,T)\times\Omega$. By the definition of $\mathcal K$ in \eqref{e:defk1}, we have \be \bigkm\leq  \inf_{z\otimes z\le M}\int_0^T\left[(z,E_m)+\frac k 2(M, I)\right]\,dt=-\frac 1{2k}\int_0^T (E_m,E_m)\,dt.\ee We infer that 
\be \label{e:ms2} \frac 1{2k} \int_0^T(E_m,E_m)\leq -\int_0^T (v_0,E_m)\, dt\leq  2kTK_0+\frac 1{4k} \int_0^T(E_m,E_m),\ee which gives a uniform $L^2((0,T)\times \Omega;\R^n)$-bound on $E_m$. Moreover, by \eqref{e:ms2} the right-hand side of \eqref{e:ms} is uniformly bounded, whence $\mathcal J(v_0, T)<+\infty$. The functional $\mathcal K$ is concave and upper semicontinuous on $L^2((0,T)\times \Omega;\R^n)\times L^\infty((0,T)\times \Omega;\R^{n\times n}_s)$ as an infimum of affine continuous functionals, cf.  \eqref{e:defk1}. The functional $\int_0^T (v_0,\cdot)\, dt$ is a linear bounded functional on $L^2((0,T)\times \Omega;\R^n)$. Consequently, every weak-$*$ accumulation point of $(E_m,B_m)$ is a maximizer of \eqref{e:conc}. Note that the constraints \eqref{e:constrweak}, \eqref{e:bwe} are preserved by the limit. 
\end{proof}

\begin{remark}[Back to the original problem] \label{r:time} Assume \eqref{e:acons}.  Let $(E,B)$ be any maximizer of \eqref{e:conc}. Set $V:=-PE$. Formula \eqref{e:ta1}, in contrast to \eqref{e:vret}, does not rely on strict positive-definiteness of $I+2B$. We thus can define a generalized solution to \eqref{e:aeuler} by setting \be\label{e:geuler} v:=\frac 1 {T-t}\int_{t}^T V(s)\,ds\in AC^2_{loc}([0,T);X^n),\ee cf. \eqref{e:ta1}. Obviously, $v(t)\in P(X^n)$. Furthermore, $v$ automatically belongs to the same regularity class as the strong solutions. Indeed, $$\partial_t [(T-t)v]=-V\in L^2((0,T)\times \Omega; \R^n).$$ On the other hand, by Hardy's inequality, $v\in L^2((0,T)\times \Omega; \R^n)$. Finally, $v(t)\in D(L^*)$ for a.a. $t\in (0,T)$ and $$(T-t)L^*v\in L^\infty((0,T)\times \Omega; \R^{n\times n}_s).$$ Moreover, \begin{equation} \label{e:bandv} (T-t)L^*v=B.\end{equation} Indeed, let $\psi:[0,T]\to X^{n\times n}_s$ be an arbitrary sufficiently regular vector field, and set $\Psi(t):=\int_0^t \psi(\tau)\,d\tau$; integrating by parts  and using \eqref{e:constrweak}, we deduce that \begin{multline*} \int_0^T\left(B(t),\psi(t)\right)\,dt= -\int_0^T\left(PE(t),L\Psi(t)\right)\,dt=
		\\=-\int_0^T\left(\int_t^T PE(\tau)\,d\tau,L\psi(t)\right)\,dt=\int_0^T ((T-t)L^*v(t),\psi(t))\,dt.\end{multline*}
However, at this level of generality, $v$ is not necessarily a strong solution. In particular, it can violate the initial condition $v(0)=v_0$ or the first equality in \eqref{e:aeuler2}. We will return to this intriguing issue in Sections \ref{s:uniq} and \ref{KdV}.  The fact that the generalized solutions can violate the initial condition was already observed by Brenier \cite{CMP18} for the inviscid Burgers equation, although he did not employ  \eqref{e:geuler} but rather \eqref{e:vret}. By the way, the solutions of the problem from Remark \ref{convint} can violate the initial condition as well, cf. \cite{Br20}.  This implies that, generally speaking, the object defined by \eqref{e:geuler} or \eqref{e:vret} is not a solution of any of the types A, B or C as defined in the Introduction.   \end{remark} 

\begin{remark}[Brenier's formula \eqref{e:vret} vs. our formula \eqref{e:geuler}] \label{r:brenier} As we have just mentioned, there is an alternative way to define a generalized solution to \eqref{e:aeuler} (that goes back to \cite[Theorem 2.3]{CMP18}) by employing formula \eqref{e:vret}. Let us try to explain why in this paper we prefer  \eqref{e:geuler}. Firstly, the definition \be \label{e:vretn} v:=-(I+2B)^{-1}E\ee  is unambigous merely on the set $[I+2B>0]$, which is not obliged to be of full measure in $(0,T)\times \Omega$. In the scalar case $n=1$, it is possible to partially fix this issue by replacing \eqref{e:vretn} with the Radon-Nikodym derivative \begin{equation} \label{forb} v:=-\frac{\mathrm{d} E}{\mathrm{d}(I\mu\otimes dt+2B)},\end{equation} as is done in \cite{CMP18} for the inviscid Burgers equation (here $I\mu\otimes dt+2B$ and $E$ are viewed as measures, cf. our Theorem \ref{t:exweak} below and the discussion above it). But \eqref{forb} means that $v$ is well-defined by \eqref{forb} only in the support of the scalar measure $I\mu\otimes dt+2B$, and it is understandable from \cite[proof of Theorem 4.2]{CMP18} that  the above-mentioned support  fails to contain the whole set $[0,T]\times \Omega$ provided the characteristics intersect somewhere on that set (for the inviscid Burgers equation it is however possible to continue the  solution $v$ to $[0,T]\times \Omega$ by an ad hoc procedure unrelated to \eqref{e:vret}, cf. \cite[Proposition 4.1]{CMP18}). There is also an algebraic trick to make \eqref{e:vretn} work outside of the set $[I+2B>0]$ for any $n\in \mathbb{N}$. Namely, $v$ can be defined a.e. in $[0,T]\times \Omega$ as the unique vector in the column space of the matrix $I + 2B$ satisfying $v + 2B.v + E = 0$. Nevertheless, the latter definition is not compatible\footnote{In the case $n=1$ this definition would imply that $v=0$ a.e. in the set $[I + 2B=0]$. The generalized solution to the inviscid Burgers equation provided by \cite[Proposition 4.1]{CMP18} coincides with \eqref{forb} in the support of the measure $I\mu\otimes dt+2B$. However, generally speaking, it does not vanish outside of the support. This is easy to check by considering the initial datum $v_0(x)=sign \left(x-\frac 1 2\right),\ x\in [0,1]\simeq \mathbb{T}^1$ for which the solution of \cite[Proposition 4.1]{CMP18} can be computed by hand.} with the considerations of \cite[Section 4]{CMP18}. Anyway, even if the set $[I+2B>0]$ is of full measure in $[0,T]\times \Omega$, the regularity class for the generalized solution defined by \eqref{e:vretn} is blurred, and it is not clear whether the solution belongs to $P(X^n)$. 

That is why we decided to adopt formula \eqref{e:geuler}. This choice is crucial for the whole Section \ref{s:uniq}. Notably,  if the two definitions  \eqref{e:geuler} and \eqref{e:vretn} produce the same function $v$, then $v$ is a strong solution to \eqref{e:aeuler} with possibly different initial condition. This claim follows from our formulas \eqref{e:discrr} and \eqref{e:w1discr} below.  \end{remark}

As a matter of fact, we can prove existence of a less regular solution to the dual problem without assuming the trace condition.  However, in this case we need to slightly relax the definition of solutions. The next result requires us to assume that $\Omega$ is a separable locally compact metric space, and $\mu\in \mathcal M(\Omega)$. We restrict ourselves to the initial data $v_0\in C_0(\Omega)^n$. Then the optimal ballistic problem \eqref{e:conc} makes sense if $(E,B)$ is just a pair of measures from $\mathcal M((0,T)\times \Omega;\R^n)\times \mathcal M((0,T)\times \Omega;\R^{n\times n}_s)$. Indeed, \eqref{e:constrweak} is substituted with \be \label{e:constrweakr}\int_{(0,T)\times \Omega} dB:\p_t \Psi + \int_{(0,T)\times \Omega} dE\cdot PL \Psi=0\ee for all sufficiently smooth vector fields $\Psi: [0,T]\to X^{n\times n}_s$, $\Psi(0)= 0$;
\eqref{e:bwe} is replaced with \be\label{e:bwer} I\mu\otimes dt+2B\ \mathrm{is}\ \mathrm{a}\  \mathrm{positive-semidefinite}\ \mathrm{matrix-valued}\ \mathrm{measure};\ee 
\eqref{e:defk1} is replaced with \be\label{e:defk1r} \bigk:=\inf_{z\otimes z\le M}\left[\int_{(0,T)\times \Omega} z\cdot dE+ \frac 1 2  \int_{(0,T)\times \Omega} \tr M\,d \mu\otimes dt+ \int_{(0,T)\times \Omega} M : dB\right];\ee \eqref{e:conc} is substituted with \be\label{e:concrad}\mathcal J(v_0, T)=\sup_{E,B:\,\eqref{e:constrweakr},\eqref{e:bwer}}-\int_{(0,T)\times \Omega} v_0\cdot dE +\bigk.\ee 

\begin{theorem}[Second existence theorem] \label{t:exweak} Let $\Omega$ be a separable locally compact metric space and $\mu$ be a Borel probability measure on it.  Assume that \be \label{e:plo} PL(I)=0.\ee Then for any $v_0\in C_0(\Omega)^n$ there exists a maximizer $$(E,B)\in  \mathcal M((0,T)\times \Omega;\R^n)\times \mathcal M((0,T)\times \Omega;\R^{n\times n}_s)$$ of \eqref{e:concrad}, and $0\leq \mathcal J(v_0, T)<+\infty$.  \end{theorem}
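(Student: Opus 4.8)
The plan is to adapt the direct method argument of Theorem \ref{t:ex} to the measure-valued setting, using the assumption \eqref{e:plo} as a substitute for the trace condition in order to gain compactness. First I would note that the pair $(E,B)=(0,0)$ is admissible (it trivially satisfies \eqref{e:constrweakr} and \eqref{e:bwer}), so $\mathcal J(v_0,T)\geq 0$. Thus, taking a maximizing sequence $(E_m,B_m)$, I may assume the analogue of \eqref{e:ms}, namely $0\leq -\int v_0\cdot dE_m+\int_0^T K_-(E_m,B_m)\,dt$. The key point is to extract a priori bounds on the \emph{total variation} norms of $E_m$ and $B_m$. For $B_m$: the lower bound $I\mu\otimes dt+2B_m\geq 0$ controls the negative part of $B_m$; to control the positive part (i.e.\ the trace), I would test the constraint \eqref{e:constrweakr} against $\Psi(t,x)=(t)\,\phi(x)I$ with $\phi\in\mathcal R$, using that $PL(I)=0$ forces, after integration, a relation expressing $\int \phi\,d(\tr B_m)$ against an integral of $PL(\phi I)$ against $E_m$ (heuristically, $\tr B_m$ is driven only by $E_m$ and not by an independent growing mode). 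This is precisely where \eqref{e:plo} replaces the trace condition of Theorem \ref{t:ex}: it says the ``constant'' matricial direction $I$ lies in the kernel of $PL$, so one cannot inflate $\tr B$ for free.

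Next I would bound $E_m$. Arguing as in \eqref{e:ms2}, once one has a (spatially localized, since $\Omega$ is only locally compact) upper bound $I\mu\otimes dt+2B_m\leq k\,I$ on a compact set, the estimate $K_-(E_m,B_m)\leq -\tfrac{1}{2k}\|E_m\|^2$ together with $-\int v_0\cdot dE_m\leq \|v_0\|_\infty\|E_m\|_{TV}$ yields a bound on $E_m$; here I would use that $v_0\in C_0(\Omega)^n$ so that $\int v_0\cdot dE_m$ makes sense against measures and is controlled by total variation. Combining, $\|E_m\|_{TV}$ and $\|B_m\|_{TV}$ are uniformly bounded, and the right-hand side of the maximizing inequality is bounded, giving $\mathcal J(v_0,T)<+\infty$. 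Then by weak-$*$ compactness of bounded sets in $\mathcal M((0,T)\times\Omega;\R^n)\times\mathcal M((0,T)\times\Omega;\R^{n\times n}_s)$ (Banach--Alaoglu, using separability so that the unit ball is weak-$*$ metrizable), pass to a subsequence $(E_m,B_m)\rightharpoonup^* (E,B)$.

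It remains to check that the limit is admissible and optimal. The linear constraint \eqref{e:constrweakr} passes to the limit because it pairs the measures against fixed continuous test functions (using the smoothness hypotheses on $\widehat{\mathcal R}$ to ensure $\partial_t\Psi$ and $PL\Psi$ are admissible test integrands in $C_0$). The cone condition \eqref{e:bwer} is weak-$*$ closed, being defined by the requirement $\int M:d(I\mu\otimes dt+2B)\geq 0$ for all nonnegative-definite-valued $M\in C_0$, which is stable under weak-$*$ limits. Finally, the functional $(E,B)\mapsto -\int v_0\cdot dE+\int_0^T K_-(E,B)\,dt$ is concave and upper semicontinuous: the term $-\int v_0\cdot dE$ is weak-$*$ continuous, and $\int_0^T K_-(E,B)\,dt$ as defined in \eqref{e:defk1r} is an infimum over $(z,M)$ of affine weak-$*$ continuous functionals, hence concave and weak-$*$ upper semicontinuous. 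Therefore the limit $(E,B)$ attains the supremum.

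The main obstacle I anticipate is the trace bound on $B_m$ in the noncompact setting: since $\Omega$ is only locally compact and the measures live on $(0,T)\times\Omega$, one does not automatically have a global $L^\infty$-type pinching $I\mu\otimes dt+2B_m\leq kI$ the way Theorem \ref{t:ex} does via the trace condition; instead one only gets control of the trace as a finite total mass, and one must make sure the localization, the use of \eqref{e:plo}, and the test-function manipulations genuinely produce a uniform total-variation bound rather than merely a local one. Establishing tightness (no mass escaping to spatial infinity) for $E_m$ and $B_m$, so that the weak-$*$ limit in $\mathcal M((0,T)\times\Omega)$ does not lose mass and the constraints survive, is the delicate step; I would handle it by exploiting that $v_0\in C_0(\Omega)$ vanishes at infinity together with the coercivity of $K_-$ to show the masses concentrate on compacts.
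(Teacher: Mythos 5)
Your overall skeleton (admissible pair $(0,0)$, maximizing sequence, total variation bounds, weak-$*$ compactness, upper semicontinuity as an infimum of affine continuous functionals) matches the paper, but the central a priori estimates do not close as you propose, and this is where the actual content of the theorem lies. First, the bound on $B_m$: testing \eqref{e:constrweakr} with $\Psi(t,x)=t\,\phi(x)I$ produces $PL\Psi=t\,PL(\phi I)$, which does \emph{not} vanish under the hypothesis \eqref{e:plo} — that would require the stronger condition \eqref{e:invt}. So your relation couples $\int\phi\,d(\tr B_m)$ to $E_m$, while your proposed bound on $E_m$ presupposes control of $B_m$: the argument is circular. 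The paper instead tests with $tI$ (or, when $\Omega$ is noncompact, with $t\Psi_\alpha I$ for a compactly supported partition of unity and then sums in $\alpha$), so that the $E$-term disappears precisely because $PL(I)=0$, giving $\tr\int dB_m=0$ exactly; together with the positive semidefiniteness \eqref{e:bwer} this yields the uniform total variation bound on $B_m$ with no reference to $E_m$.

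The more serious gap is your bound on $E_m$. You assume a pointwise pinching $I\mu\otimes dt+2B_m\leq kI$ (even locally) in order to reuse the $L^2$ coercivity $K_-\leq-\frac{1}{2k}\|E_m\|^2$ from the proof of Theorem \ref{t:ex}; but no such upper bound is available — producing it is exactly what the trace condition does, and Theorem \ref{t:exweak} is designed to dispense with that condition. Here $B_m$ is merely a measure of bounded total variation, possibly singular, $E_m$ need not lie in $L^2$, and $\|E_m\|^2$ is not even defined. The paper's substitute, which your proposal is missing, is the observation that $-2\int_0^T K_-(\cdot,B)\,dt$ defined by \eqref{e:defk1r} is a positive semidefinite quadratic form in $E$ for fixed $B$ (via Gleason's characterization), and a polarization/Cauchy--Schwarz argument against the special measure $dE_0=-d(I\mu\otimes dt+2B).v_0$ gives the inequalities \eqref{e:hold1}--\eqref{e:hold2}: both $\bigl(\int v_0\cdot dE_m\bigr)^2$ and $\bigl(\int|dE_m|\bigr)^2$ are bounded by $-C\int_0^T K_-(E_m,B_m)\,dt$, where $C$ depends only on the trace of the total mass of $I\mu\otimes dt+2B_m$, already controlled in the previous step. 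Inserting this into the maximizing inequality closes the estimates and yields simultaneously the total variation bound on $E_m$ and $\mathcal J(v_0,T)<+\infty$. Without this device (or an equivalent one) your chain of estimates does not close. Your concluding compactness and semicontinuity step is fine, and the tightness worry is not essential: weak-$*$ compactness in $\mathcal M=C_0^*$ does not require tightness, and optimality of the limit follows from upper semicontinuity of the functional as an infimum of affine weak-$*$ continuous functionals, which is how the paper argues.
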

\begin{proof} The proof is somewhat similar to the proof of Theorem \ref{t:ex} but a priori bounds are obtained in a different manner. It suffices to consider the pairs $(E,B)$ that meet the restrictions \eqref{e:constrweakr}, \eqref{e:bwer}. Testing \eqref{e:defk1r} with $z=0,\ M=0$, we see that \be\label{e:estk} \bigk\leq 0.\ee Analogously, testing \eqref{e:concrad} with $E=0,\;B=0$, we infer that $\mathcal J(v_0, T)\geq 0$.

 We now claim that, for any $v_0\in C_0(\Omega)^n$ and any pair $(E,B)$ satisfying  \eqref{e:constrweakr}, \eqref{e:bwer}, one has  \be\label{e:hold1}\left( \int_{(0,T)\times \Omega} v_0\cdot dE \right)^2\leq -2\bigk\cdot\int_{(0,T)\times \Omega} d(I\mu\otimes dt+2B) v_0:v_0,\ee and therefore  \be\label{e:hold2}\left( \int_{(0,T)\times \Omega} |dE|\right)^2\leq -2\bigk\cdot\, \tr\int_{(0,T)\times \Omega} d(I\mu\otimes dt+2B).\ee Indeed, it is clear that $dE_0:= -d(I\mu\otimes dt+2B). v_0$ is a finite vectorial Radon measure on $(0,T)\times \Omega$. Using the results of \cite{G66}, one can check that $-2\bigk$ defined via \eqref{e:defk1r} is a positive-semidefinite quadratic form w.r.t. $E$ for fixed $B$. Let $\omega(\cdot,\cdot)$ be the corresponding bilinear form obtained by polarization. By the Cauchy-Schwarz inequality, \be\label{e:hold3}\omega^2(E,E_0)\leq 4\bigk\mathcal K(E_0,B).\ee The infimum in \eqref{e:defk1r} with $E=E_0$ is achieved at $z=v_0$, whence $$\mathcal K(E_0,B)=-\frac 1 2 \int_{(0,T)\times \Omega} d(I\mu\otimes dt+2B) v_0:v_0.$$ Let $z_m$ be the minimizing sequence for the infimum in \eqref{e:defk1r} with $E$ replaced by $E-E_0$. A direct calculation verifies that  \begin{multline*}2\int_{(0,T)\times \Omega} v_0\cdot dE \\+ \left[\int_{(0,T)\times \Omega} z_m\cdot d(E-E_0)+ \frac 1 2  \int_{(0,T)\times \Omega} \tr(z_m\otimes z_m) \,d \mu\otimes dt+ \int_{(0,T)\times \Omega} (z_m\otimes z_m) : dB\right]\\ = \left[\int_{(0,T)\times \Omega} \tilde z_m\cdot d(E+E_0)+  \frac 1 2  \int_{(0,T)\times \Omega} \tr(\tilde z_m\otimes \tilde z_m) \,d \mu\otimes dt+ \int_{(0,T)\times \Omega} (\tilde z_m\otimes \tilde z_m) : dB\right]\\ \geq \mathcal K(E+E_0,B). \end{multline*} where $\tilde z_m=z_m+2 v_0$.  Consequently, \begin{multline*}\int_{(0,T)\times \Omega} v_0\cdot dE \geq \frac 12 \mathcal K(E+E_0,B)- \frac 12 \mathcal K(E-E_0,B)\\=\frac 1 4 \left(\omega(E-E_0,E-E_0)-\omega(E+E_0,E+E_0)\right)=-\omega(E,E_0).\end{multline*} The reverse inequality is proved in a similar fashion, swapping the roles of $E+E_0$ and $E-E_0$. Hence, the left-hand sides of \eqref{e:hold1} and \eqref{e:hold3} coincide, but due to the observations above so do the right-hand sides. 
 
 Since we have proved \eqref{e:hold1} for arbitrary $v_0\in C_0(\Omega)^n$, inequality \eqref{e:hold2} immediately follows. 

Let $(E_m,B_m)$ be a maximizing sequence for \eqref{e:concrad}. Without loss of generality, it satisfies \be \label{e:intrm1} 0\le -\int_{(0,T)\times \Omega} v_0\cdot dE_m +\bigkm\ee Observe that, due to \eqref{e:plo}, $$\tr\int_{(0,T)\times \Omega} dB_m=0$$  (indeed, when $\Omega$ is compact it suffices to test \eqref{e:constrweakr} with $tI$; in the general case we can test with $t\Psi_\alpha I$ where $\{\Psi_\alpha\}$ is a compactly supported partition of unity on $\Omega$, and sum up the results). Since $I\mu\otimes dt+2B$ is a positive-semidefinite matrix-valued measure, we have got a uniform $\mathcal M((0,T)\times \Omega;\R^{n\times n}_s)$-bound on $B_m$.   By \eqref{e:intrm1}  and \eqref{e:hold1}, \be \label{e:intrm2} 0\le \sqrt{-2nT\|v_0\|^2_{C_0(\Omega)} \bigkm}+\bigkm.\ee 
By some elementary algebra and \eqref{e:hold2}, this implies a uniform $\mathcal M((0,T)\times \Omega;\R^n)$-bound on $E_m$. Hence, the right-hand side of \eqref{e:intrm1} is uniformly bounded, which gives $\mathcal J(v_0, T)<+\infty$. The right-hand side of \eqref{e:concrad} is concave and upper semicontinuous on $\mathcal M((0,T)\times \Omega;\R^n)\times \mathcal M((0,T)\times \Omega;\R^{n\times n}_s)$ as an infimum of affine continuous functionals. Consequently, every weak-$*$ accumulation point of $(E_m,B_m)$ is a maximizer of \eqref{e:concrad}. 
\end{proof}

\begin{remark}[Relevance of \eqref{e:plo}] \label{plusin} Assumption \eqref{e:plo} is essential since it prevents the dumb blow-up $\mathcal J(v_0, T)=+\infty$ as in Example 
\ref{ex:h}. On the other hand, \eqref{e:plo} is a very light assumption, since it holds, for instance, for any differential operator $L$ without zero-order terms. Although \eqref{e:plo} does not necessarily follow from the trace condition, it is obviously implied by the stronger assumption \eqref{e:invt}. 
\end{remark}

\section{Energy levels and weak-strong uniqueness} \label{s:uniq}

As we already discussed, although the energy is conserved for the strong solutions of the abstract Euler equation,  this feature can fail for weak solutions. Moreover, in view of the results of \cite{W11}, it is plausible that the optimal values $\mathcal I(v_0,T) $ and $\mathcal J(v_0,T)$ may exceed the reference value $TK_0$.  In this section we show that this  cannot happen for $\mathcal J(v_0,T)$, at least when the dual problem has a solution in the class \eqref{e:be} (which is  particularly true if the trace condition holds). It will be also proved that if $\mathcal J(v_0,T)$ assumes the critical value $TK_0$, the generalized solution to \eqref{e:aeuler} constructed in Remark \ref{r:time} is automatically a strong solution with the same initial datum. This will lead to ``weak-strong uniqueness'' of the maximizer of the dual problem. The results of this section are new even for the incompressible Euler.

\begin{theorem}[Sharp upper bound for $\mathcal J$] \label{t:ldistr} Assume \eqref{e:acons} and that the approximation condition holds for $L$. Fix $v_0\in P(X^n)$ and $T>0$. Assume that there exists a maximizer $(E,B)$ to \eqref{e:conc} in the class \eqref{e:be}. Then $\mathcal J(v_0,T)\leq TK_0$.  In addition, if $\mathcal J(v_0,T)=TK_0$, then the generalized solution $v$ to \eqref{e:aeuler} defined by \eqref{e:geuler} is necessarily a strong solution to \eqref{e:aeuler} with the same initial datum $v_0$.  
\end{theorem}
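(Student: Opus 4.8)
The plan is to evaluate the dual functional at the prescribed maximizer $(E,B)$, bound it by hand, and read off the equality case. Since $(E,B)$ is a maximizer, $\mathcal J(v_0,T)=-\int_0^T(v_0,E)\,dt+\int_0^T K_-(E,B)\,dt$. By Remark \ref{r:time} the field $v$ of \eqref{e:geuler} obeys $v(t)\in P(X^n)$, belongs to the strong-solution class \eqref{e:strongclass}, satisfies $(T-t)L^*v=B$, and $V:=-PE=-\p_t[(T-t)v]$, that is $V=v-(T-t)\p_t v$. I will bound $\int_0^T K_-(E,B)\,dt$ from above by testing the infimum defining $K_-$ with $z=v$, $M=v\otimes v$, simplify the resulting integrals with the identities above and two integrations by parts in $t$, and thereby collapse $\mathcal J(v_0,T)$ to $T\bigl[(v_0,v(0))-\tfrac12\|v(0)\|^2\bigr]$, which is $\le TK_0$ by completing the square.

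The computation proceeds in four steps. (i) Since $v_0\in P(X^n)$ is time-independent, $-\int_0^T(v_0,E)\,dt=\int_0^T(v_0,V)\,dt=-\int_0^T\bigl(v_0,\p_t[(T-t)v]\bigr)\,dt=T(v_0,v(0))$, using $(T-t)v\in H^1\cap C([0,T];X^n)$ with $(T-t)v\big|_{t=T}=0$. (ii) Testing \eqref{e:defk1} with $z=v$, $M=v\otimes v$ (legitimate after the reduction $M=z\otimes z$ and approximation of $v$ by sufficiently smooth fields, the quadratic functional being $L^2$-continuous since $0\le I+2B\le kI$ for some $k>0$ as $B\in L^\infty$), yields $\int_0^T K_-(E,B)\,dt\le\int_0^T\bigl[(v,E)+\tfrac12(v\otimes v,I+2B)\bigr]\,dt$. (iii) Here $(v,E)=(v,PE)=-(v,V)$ because $v\in P(X^n)$, and $(v\otimes v,I+2B)=(v,v)+2(T-t)(v\otimes v,L^*v)=(v,v)$ a.e., since $(v\otimes v,L^*v)=0$ a.e.\ by the argument of Proposition \ref{propap}, which needs only \eqref{e:acons}, the approximation condition, and $L^*v(t)\in L^\infty(\Omega,\R^{n\times n}_s)$ for a.e.\ $t$ — all at our disposal. (iv) To compute $\int_0^T(v,V)\,dt$, observe $h(t):=(T-t)\|v(t)\|^2$ lies in $W^{1,1}(0,T)$ (Hardy's inequality), with $h(0)=T\|v(0)\|^2$, $h(T)=0$ (the limit exists and cannot be positive because $\|v\|^2\in L^1(0,T)$), and $h'=\|v\|^2-2(v,V)$ a.e.; integrating gives $\int_0^T(v,V)\,dt=\tfrac12\int_0^T\|v\|^2\,dt+\tfrac T2\|v(0)\|^2$. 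Combining (ii)--(iv), $\int_0^T K_-(E,B)\,dt\le-\tfrac T2\|v(0)\|^2$, so with (i), $\mathcal J(v_0,T)\le T(v_0,v(0))-\tfrac T2\|v(0)\|^2=\tfrac T2\|v_0\|^2-\tfrac T2\|v_0-v(0)\|^2\le TK_0$.

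For the sharpness part, assume $\mathcal J(v_0,T)=TK_0$, so every inequality above is an equality. Equality in the last step forces $v(0)=v_0$. Equality in step (ii), together with the fact that the time-integrated infimum equals the integral of the pointwise infimum (measurable selection, cf.\ \cite{G66}), forces $(v(t),v(t)\otimes v(t))$ to attain the pointwise infimum for a.e.\ $t$; since $I+2B\ge0$ the optimal $M$ is $z\otimes z$, and vanishing of the gradient of the convex quadratic $z\mapsto(z,E)+\tfrac12\bigl(z,(I+2B)z\bigr)$ at $z=v$ gives $E+(I+2B)v=0$ a.e. Applying $P$ and using $Pv=v$ and $B=(T-t)L^*v$ gives $V=-PE=v+2(T-t)P[L^*v.v]$; comparing with $V=v-(T-t)\p_t v$ and dividing by $T-t>0$ produces $\p_t v+2P[L^*v.v]=0$. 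Together with $v(t)\in P(X^n)$, $v(0)=v_0$, and the regularity \eqref{e:strongclass} from Remark \ref{r:time}, this exhibits $v$ as a strong solution of \eqref{e:aeuler} with initial datum $v_0$.

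The delicate point, as usual in this circle of ideas, is the behaviour at $t=T$: the integration by parts in step (iv) hinges on the vanishing of the boundary term $(T-t)\|v(t)\|^2$ as $t\uparrow T$, which follows from $\|v\|^2\in L^1(0,T)$ exactly as in the derivation of \eqref{e:cla3} (the boundary term in step (i) vanishes trivially since $(T-t)v$ is continuous up to $t=T$ with zero trace there). The remaining technicalities — the density argument justifying the test pair $z=v$, $M=v\otimes v$ in \eqref{e:defk1}, and the measurable-selection identity used in the equality analysis — are routine given the two-sided bound $0\le I+2B\le kI$, and are the kind of thing one dispatches in a line.
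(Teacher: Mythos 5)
Your proof is correct, and it reaches the same conclusions by a slightly different organization of essentially the same computation. The paper introduces the residual $r:=v+2B.v+E$ as in \eqref{e:discrr} and, using the bilinear form $\omega$ from the proof of Theorem \ref{t:exweak}, derives the \emph{exact} identity $\mathcal J(v_0,T)-TK_0+\tfrac T2\,(v_0-v(0),v_0-v(0))=\inf_{z\otimes z\le M}\int_0^T\bigl[(z,r)+\tfrac12(M,I+2B)\bigr]\,dt$, then concludes by a sign argument (left side nonnegative, right side nonpositive), and finally extracts $r=0$ from \eqref{e:r00}; you instead bypass $r$ and $\omega$ altogether, obtaining the upper bound by testing the infimum in \eqref{e:defk1} with the admissible pair $(z,M)=(v,v\otimes v)$, and recovering the equality case from the first-order condition of the convex quadratic in $z$ at its minimizer, which is precisely the statement $E+(I+2B)v=0$, i.e.\ $r=0$; projecting by $P$ and using \eqref{e:bandv} then gives \eqref{e:aeuler2} exactly as in the paper. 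The substantive ingredients coincide: $B=(T-t)L^*v$ from Remark \ref{r:time}, $(v\otimes v,L^*v)=0$ a.e.\ via the approximation condition as in Proposition \ref{propap}, the identity $-\int_0^T(v_0,E)\,dt=T(v_0,v(0))$ (the paper's \eqref{e:vabn}), the completed square $T(v_0,v(0))-\tfrac T2\|v(0)\|^2=TK_0-\tfrac T2\|v_0-v(0)\|^2$, and the vanishing of the boundary term $(T-t)\|v(t)\|^2$ at $t=T$, where you are in fact more explicit than the paper's terse integration by parts preceding \eqref{e:withk2}. What your route buys is the avoidance of the polarization/quadratic-form machinery (Gleason's lemma) in favour of a one-line test-function bound plus convexity; what the paper's route buys is a closed formula for the defect $\mathcal J-TK_0$ in terms of $r$, which makes the three quantities $v(0)-v_0$, $\mathcal J-TK_0$ and $r$ vanish simultaneously in a single stroke. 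One small criticism: in the equality analysis the appeal to a pointwise-infimum/measurable-selection interchange is superfluous; the integrated first-order condition for the convex quadratic $z\mapsto\int_0^T[(z,E)+\tfrac12((I+2B)z,z)]\,dt$ on $L^2$, justified by density of sufficiently smooth fields and the two-sided bound $0\le I+2B\le kI$ (the lower bound being the constraint \eqref{e:bwe} satisfied by any maximizer), already yields $E+(I+2B)v=0$ a.e.
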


\begin{proof} 
	Suppose that $\mathcal J(v_0,T)\geq TK_0$ (otherwise there is nothing to prove). Set \be \label{e:discrr} r:=v+2B.v+E\in L^2((0,T)\times \Omega;\R^n).\ee

	We claim that \be \label{e:w1discr}\p_t v+2P[L^*v.v]=P\frac r{T-t}\ee a.e. in $(0,T)\times \Omega$.  Indeed, using \eqref{e:bandv}, we compute \begin{multline}Pr=P(v+2B.v+E)=v+2(T-t)PL^*v.v+(-v+(T-t)\p_t v)\\ = (T-t)\p_t v+2(T-t)P[L^*v.v].\end{multline}
	
As in the proof  of Proposition \ref{propap}, the approximation condition implies that \be \label{e:w3++}(v\otimes v, L^*v)=0\ee  a.e. in $(0,T)$.  Remembering 
	\eqref{e:bandv}, we see that \be \label{e:w3+++}\int_0^T (v\otimes v, B)\, dt=0.\ee 
	
Taking the $X^n$ scalar product of \eqref{e:w1discr} with $(T-t)v$ for a.a. $t\in (0,T)$ and employing \eqref{e:w3++} we get \begin{equation} \label{e:withk1} (T-t)\frac d {dt} K(t)= (r,v)\in L^1(0,T).\end{equation}
Integrating by parts, we obtain \begin{equation} \label{e:withk2}\int_0^T K(t)\,dt-TK(0)=\int_0^T (r,v)\, dt.\end{equation}  
Employing \eqref{e:vab} with $a(t)=(T-t)v$, which is still valid at this level of generality, we get \be \label{e:vabn}-\int_0^T (v_0,E)\, dt=T(v_0,v(0)). \ee

Let $\omega$ be the bilinear form defined as in the proof of Theorem \ref{t:exweak}.
Using \eqref{e:discrr}, \eqref{e:w3+++}, \eqref{e:withk2} and \eqref{e:vabn}, we compute \begin{multline*} \mathcal J(v_0,T)= -\int_0^T (v_0,E)\, dt+ \bigk=-\int_0^T (v_0,E)\, dt-\frac 1 2 \omega(E,E)\\=T(v_0,v(0))-\frac 1 2 \omega(r,r)-\frac 1 2 \omega((I+2B)v,(I+2B)v)+\omega(r,(I+2B)v)\\=T(v_0,v(0))+\mathcal K(r,B)+\mathcal K((I+2B)v,B)+\omega(r,(I+2B)v)
	\\=T(v_0,v(0))+\mathcal K(r,B)-\frac 12 \int_0^T ((I+2B)v,v)\,dt+\int_0^T(r,v)\,dt\\
	=T(v_0,v(0))+\mathcal K(r,B)+\int_0^T(r,v)\,dt-\int_0^T K(t)\,dt \\
		=T(v_0,v(0))-TK(0)+\inf_{z\otimes z\le M}\int_0^T \left[(z,r)+\frac 1 2(M, I+2B)\right]\, dt. \end{multline*} Hence, $$\mathcal J(v_0,T)-TK_0+\frac 12T(v_0-v(0), v_0-v(0))=\inf_{z\otimes z\le M}\int_0^T \left[(z,r)+\frac 1 2(M, I+2B)\right]\, dt.$$ Since the left-hand side is non-negative and the right-hand side is non-positive, both sides vanish, whence $v(0)=v_0$ and $\mathcal J(v_0,T)= TK_0$.  Since $B\in L^\infty((0,T)\times \Omega;\R^{n\times n}_s)$, there is $k>0$ such that $I+2B\leq kI$ a.e. in $(0,T)\times \Omega$. Consequently, \begin{equation} \label{e:r00}
		0=\inf_{z\otimes z\le M}\int_0^T \left[(z,r)+\frac 1 2(M, I+2B)\right]\, dt\leq \inf_{z\otimes z\le M}\int_0^T \left[(z,r)+\frac k 2\tr M \right]\, dt\leq 0.\end{equation} Some simple algebra shows that \eqref{e:r00} implies $r=0$ a.e. in $(0,T)\times \Omega$. Now \eqref{e:w1discr} yields \eqref{e:aeuler2}. 
\end{proof}

In order to get a ``weak-strong uniqueness'' statement we first need to show that the strong solutions are unique. 

\begin{lemma}[Uniqueness of strong solutions]\label{l.uniques} Assume \eqref{e:acons} and that the approximation condition holds for $L$. Let $v^1$, $v^2$ be two strong solutions to  \eqref{e:aeuler} with the same initial datum $v_0\in P(X^n)$. Then $v^1(\tau)=v^2(\tau)$ for all $\tau\in [0,T)$.
\end{lemma}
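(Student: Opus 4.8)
The plan is to establish uniqueness by a weighted Grönwall estimate for the difference $w:=v^1-v^2$, the weight being forced by the singular factor $(T-t)^{-1}$ built into the strong-solution class of Definition~\ref{d:strongclass}. From \eqref{e:strongclass} one has $w\in C([0,T);X^n)$ with $w\in H^1(0,T-\epsilon;X^n)$ for every $\epsilon>0$, $w(0)=v_0-v_0=0$, $w(t)\in P(X^n)$ and $L^*w(t)\in L^\infty(\Omega,\R^{n\times n}_s)$ for a.a.\ $t$, and $(T-t)L^*v^i\in L^\infty((0,T)\times\Omega;\R^{n\times n}_s)$. Subtracting the two copies of \eqref{e:aeuler2} and using the purely bilinear identity $L^*v^1.v^1-L^*v^2.v^2=L^*v^1.w+L^*w.v^2$ gives $\p_t w+2P[L^*v^1.w+L^*w.v^2]=0$ a.e.\ in $(0,T)$. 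Taking the $X^n$ scalar product with $w$ (legitimate on $[0,T-\epsilon]$ since $w\in H^1$ there and $w=Pw$) and noting $w\otimes w\in X^{n\times n}_s$, I would get
\be \tfrac12\tfrac{d}{dt}(w,w)=-2\big[(L^*v^1,w\otimes w)+(L^*w.v^2,w)\big]\qquad\text{a.e.\ in }(0,T). \ee

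The heart of the argument is to move the operator off $w$ in the second term. As in the proof of Proposition~\ref{propap}, the approximation condition together with \eqref{e:acons} makes the cubic form $\zeta\mapsto(\zeta\otimes\zeta,L^*\zeta)$ vanish on every $\zeta\in P(X^n)$ with $L^*\zeta\in L^\infty$; evaluating it along the line $\zeta=v^1(t)+\lambda v^2(t)$ and reading off the coefficient of $\lambda$ recovers the polarization identity \eqref{polar} for the (non-smooth) fields at hand, i.e.\ $(w\otimes w,L^*v^2)+2(v^2\otimes w,L^*w)=0$. Hence $(L^*w.v^2,w)=(v^2\otimes w,L^*w)=-\tfrac12(L^*v^2,w\otimes w)$, and, using $|(M,w\otimes w)|\le\|M\|_{L^\infty(\Omega)}(w,w)$,
\be \tfrac{d}{dt}(w,w)=-4\big(L^*v^1-\tfrac12L^*v^2,\,w\otimes w\big)\le\frac{C}{T-t}\,(w,w), \ee
with $C:=4\big(\|(T-t)L^*v^1\|_{L^\infty((0,T)\times\Omega)}+\tfrac12\|(T-t)L^*v^2\|_{L^\infty((0,T)\times\Omega)}\big)$.

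It then remains to integrate this inequality with the integrating factor $(T-t)^{C}$: the function $t\mapsto(T-t)^{C}(w(t),w(t))$ is absolutely continuous and non-increasing on each $[0,T-\epsilon]$ and vanishes at $t=0$, so, being non-negative, it is identically $0$; thus $(w(t),w(t))=0$ and $v^1(t)=v^2(t)$ for every $t\in[0,T)$. I expect the polarization step to be the only delicate point: justifying \eqref{polar} for the merely $L^2$ fields $v^1,v^2,w$ (whose images under $L^*$ are only weighted-$L^\infty$) hinges on the approximation argument of Proposition~\ref{propap}, and one must check that $v^1+\lambda v^2$ stays in $P(X^n)$ with a uniform $L^\infty$ bound on $L^*(v^1+\lambda v^2)$ for $\lambda$ near the relevant value. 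The singular weight $1/(T-t)$ is harmless because $\int_0^t(T-s)^{-1}\,ds<\infty$ for $t<T$.
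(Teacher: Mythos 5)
Your proof is correct and follows essentially the same route as the paper: subtract the two copies of \eqref{e:aeuler2}, test with the difference $w$, use the polarized conservativity identity \eqref{polar} (justified for the strong-solution class via the approximation condition, as in Proposition \ref{propap}) to absorb the $L^*w$ term, and conclude by a Gr\"onwall argument. The only cosmetic difference is that the paper fixes $\tau<T$ so that $L^*v^1,L^*v^2$ are bounded on $(0,\tau)\times\Omega$ and applies the standard Gr\"onwall--Bellman lemma there, whereas you keep the singular weight $C/(T-t)$ and use the integrating factor $(T-t)^{C}$; both handle the degeneracy at $t=T$ equally well.
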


\begin{proof} Denote $w:=v^1-v^2$. Fix $\tau\in [0,T)$, and let $h:(0,\tau)\to \R$ be a smooth non-negative compactly supported scalar function. Consider the first equality in \eqref{e:aeuler2} both for $v^1$ and $v^2$, compute the difference thereof, and calculate the scalar product of the resulting equality with $wh$ in $L^2((0,\tau)\times \Omega, \R^n)$ to obtain
$$\int_0^\tau \left[-\frac 1 2 h'(t)(w(t),w(t)) +2h(t)\left([L^*v^1(t).w(t)]+[L^*w(t).v^2(t)],w(t)\right)\right]\,dt=0.$$ Using \eqref{polar}, this can be rewritten as 

\begin{equation}\label{e:410}\int_0^\tau \left[-\frac 1 2 h'(t)(w(t),w(t)) \right]\,dt = \int_0^\tau h(t)\left[-2\left(w(t), L^*v^1(t)\otimes w(t)\right)+\left(L^*v^2(t), w(t)\otimes w(t)\right)\right]\,dt.\end{equation}
This is not completely rigorous because \eqref{polar} is valid for sufficiently smooth functions.  However, employing the approximation condition, more or less as we did during the proof of Proposition \ref{propap}, we can justify the applicability of \eqref{polar} provided $v^1$, $v^2$ merely belong to the regularity class \eqref{e:strongclass}. 

It follows from \eqref{e:410} that \begin{equation*}\int_0^\tau \left[- h'(t)(w(t),w(t)) \right]\,dt \leq C \int_0^\tau h(t)\left(w(t), w(t)\right)\,dt\end{equation*} with $C$ depending on the $L^\infty$ norms of $v^1$ and $v^2$. 
By Gr\"onwall-Bellman lemma, $$(w(\tau),w(\tau))\leq e^{C\tau} (w(0),w(0))=0.$$
\end{proof}

\begin{corollary}[Weak-strong uniqueness] \label{c:wsu} Assume \eqref{e:acons} and that the approximation condition holds for $L$. If $\mathcal J(v_0,T)=TK_0$, then there is at most one maximizer of \eqref{e:conc} in the class \eqref{e:be}. In particular, in the framework of Theorem \ref{t:smooth} the pair $(E_+,B_+)$ is the unique  maximizer of \eqref{e:conc} in the class \eqref{e:be}. 
\end{corollary}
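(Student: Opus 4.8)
The plan is to deduce Corollary \ref{c:wsu} from Theorem \ref{t:ldistr} and Lemma \ref{l.uniques} as follows. Suppose $\mathcal J(v_0,T)=TK_0$ and let $(E^1,B^1)$ and $(E^2,B^2)$ be two maximizers of \eqref{e:conc} lying in the class \eqref{e:be}. By Remark \ref{r:time}, each of them produces via \eqref{e:geuler} a generalized solution $v^1$, resp. $v^2$, to \eqref{e:aeuler}, living in the regularity class \eqref{e:strongclass}. Since $\mathcal J(v_0,T)=TK_0$, Theorem \ref{t:ldistr} applies to each maximizer and tells us that both $v^1$ and $v^2$ are in fact \emph{strong} solutions to \eqref{e:aeuler} with the \emph{same} initial datum $v_0$. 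Lemma \ref{l.uniques} then gives $v^1(\tau)=v^2(\tau)$ for all $\tau\in[0,T)$, i.e. $v^1=v^2=:v$ as elements of $L^2((0,T)\times\Omega;\R^n)$.

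The remaining point is to upgrade the equality of the generalized solutions to the equality of the maximizers themselves, i.e. $(E^1,B^1)=(E^2,B^2)$. For the $B$-component this is immediate from \eqref{e:bandv}: since $(T-t)L^*v=B^i$ for $i=1,2$ and $v^1=v^2$, we get $B^1=B^2$ a.e. in $(0,T)\times\Omega$. For the $E$-component I would use that, by the proof of Theorem \ref{t:ldistr}, the discrepancy $r^i:=v^i+2B^i.v^i+E^i$ vanishes a.e. (this was established as part of that proof, right after \eqref{e:r00}); hence $E^i=-v^i-2B^i.v^i$ is uniquely determined by $v^i$ and $B^i$, and since these agree we conclude $E^1=E^2$. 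Thus the maximizer is unique.

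For the second assertion, note that in the framework of Theorem \ref{t:smooth} we have a strong solution $v$ with $\mathcal I(v_0,T)=\mathcal J(v_0,T)=TK_0$ and an explicit maximizer $(B_+,E_+)\in$ \eqref{e:be}. Since $\mathcal J(v_0,T)=TK_0$, the uniqueness statement just proved applies, so $(B_+,E_+)$ is \emph{the} unique maximizer of \eqref{e:conc} in the class \eqref{e:be}.

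I do not expect a serious obstacle here, since the corollary is essentially a packaging of the two preceding results; the only mild subtlety is making sure the identification $E^i=-v^i-2B^i.v^i$ is legitimate, which is why I would explicitly invoke the vanishing of $r$ obtained inside the proof of Theorem \ref{t:ldistr} rather than re-deriving it. If one prefers to avoid reaching into that proof, an alternative is to observe directly that strict concavity of the functional $\int_0^T K_-(E,B)\,dt - \int_0^T(v_0,E)\,dt$ in the appropriate quotient, combined with the fact that both maximizers yield the same $v$ and hence the same value, forces the optimal pair to be unique; but the route through the already-established identity $r=0$ is cleaner.
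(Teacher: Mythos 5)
Your argument is correct and follows essentially the same route as the paper: Theorem \ref{t:ldistr} turns each maximizer into a strong solution with initial datum $v_0$, Lemma \ref{l.uniques} identifies the strong solutions, and then \eqref{e:bandv} together with the vanishing of $r$ from \eqref{e:discrr} (established in the proof of Theorem \ref{t:ldistr}) recovers $(B,E)$ uniquely from $v$, with the second claim following since Theorem \ref{t:smooth} gives $\mathcal J(v_0,T)=TK_0$. No gaps to report.
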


\begin{proof} By Theorem \ref{t:ldistr}, any alleged maximizer $(E,B)$ determines a strong solution $v$ to \eqref{e:aeuler}. By Lemma \ref{l.uniques}, all such strong solutions coincide. The first claim follows from \eqref{e:bandv} and \eqref{e:discrr} (with $r\equiv 0$, cf. the proof of Theorem \ref{t:ldistr}).  It remains to remember that in the framework of Theorem \ref{t:smooth} one has $\mathcal J(v_0,T)=TK_0$.
\end{proof}

\begin{corollary}[Necessary and sufficient conditions for a discrepancy] \label{c:enlevels} Assume \eqref{e:acons} and that the approximation condition hold for $L$. Assume that there exist a strong solution $u$ to \eqref{e:aeuler} with the initial datum $v_0$ and a maximizer $(E,B)$ to \eqref{e:conc} in the class \eqref{e:be}. Let $v$ be the generalized solution to \eqref{e:aeuler} defined by \eqref{e:geuler}. Then the following three conditions are equivalent:

i) $v$ does not coincide with $u$ on $[0,T)$;

ii) $\mathcal J(v_0,T)< TK_0$;

iii) $I\not\geq 2(t-T)L^*u(t)$ on a set of positive measure in $(0,T)\times \Omega$.

Moreover, in all these cases, $v$ violates the initial condition and/or the first equality in \eqref{e:aeuler2}. 
\end{corollary}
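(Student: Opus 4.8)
\textbf{Proof proposal for Corollary \ref{c:enlevels}.}

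The plan is to establish the equivalences by a short chain of implications, exploiting the machinery already assembled. First I would show $\mathrm{(ii)}\Rightarrow\mathrm{(i)}$: if $v$ coincided with $u$ on $[0,T)$, then $v$ would be a strong solution with initial datum $v_0$; I claim that the pair $(B,E)$ recovered from a strong solution via the formulas of Theorem \ref{t:smooth} (with $B_+=L^*a$, $E_+=\partial_t a+w$, $a=(T-t)v$) realizes the value $TK_0$ in \eqref{e:conc}. Actually this is cleaner: if $v=u$, then the discrepancy $r$ defined in \eqref{e:discrr} vanishes (since $u$ satisfies \eqref{e:aeuler2}, and $r=(T-t)(\partial_t v+2P[L^*v.v])$ by the computation in the proof of Theorem \ref{t:ldistr}, using that $(I-P)$-part also matches), and then running the computation of Theorem \ref{t:ldistr} backwards gives $\mathcal J(v_0,T)=T(v_0,v(0))-TK(0)+0=TK_0$, contradicting (ii). Conversely, $\mathrm{(i)}\Rightarrow\mathrm{(ii)}$ is exactly the contrapositive of the second assertion of Theorem \ref{t:ldistr}: if $\mathcal J(v_0,T)=TK_0$ then $v$ is a strong solution with datum $v_0$, hence by Lemma \ref{l.uniques} $v\equiv u$ on $[0,T)$. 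Thus $\mathrm{(i)}\Leftrightarrow\mathrm{(ii)}$; note we always have $\mathcal J(v_0,T)\leq TK_0$ by Theorem \ref{t:ldistr}, so (ii) is genuinely a strict inequality.

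Next I would handle $\mathrm{(iii)}\Leftrightarrow\mathrm{(i)}$ (or $\mathrm{(iii)}\Leftrightarrow\mathrm{(ii)}$, whichever is smoother). For $\neg\mathrm{(iii)}\Rightarrow\neg\mathrm{(i)}$: assume $I\geq 2(t-T)L^*u(t)$ a.e., i.e.\ condition \eqref{e:pd++} holds for the strong solution $u$. Then Theorem \ref{t:smooth} applies verbatim: $\mathcal I(v_0,T)=\mathcal J(v_0,T)=TK_0$, the pair $(B_+,E_+)$ built from $u$ maximizes \eqref{e:conc}, and the recovery formula \eqref{e:ta1} gives back $u$. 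Combined with the already-established $\mathrm{(ii)}\Leftrightarrow\mathrm{(i)}$ and the fact that $\mathcal J(v_0,T)=TK_0$ forces (via Theorem \ref{t:ldistr} plus Lemma \ref{l.uniques}) that \emph{every} generalized solution from a maximizer equals $u$, we conclude $v\equiv u$, i.e.\ $\neg\mathrm{(i)}$. For the converse $\neg\mathrm{(i)}\Rightarrow\neg\mathrm{(iii)}$, i.e.\ $v\equiv u$ implies \eqref{e:pd++} for $u$: since $v$ is the generalized solution attached to the maximizer $(E,B)$, Remark \ref{r:time} gives $(T-t)L^*v=B$, and any maximizer satisfies $I+2B\geq0$ a.e.\ by \eqref{e:bwe}; translating, $I+2(T-t)L^*v\geq0$, i.e.\ $I\geq 2(t-T)L^*v(t)=2(t-T)L^*u(t)$ a.e., which is exactly $\neg\mathrm{(iii)}$. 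This closes the cycle.

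Finally, the last sentence: in cases (i)--(iii) one must check that $v$ violates either the initial condition $v(0)=v_0$ or the first equation in \eqref{e:aeuler2}. Suppose for contradiction that $v$ satisfies both. Then $v$ lies in the regularity class \eqref{e:strongclass} (by Remark \ref{r:time}) and obeys \eqref{e:aeuler2} with datum $v_0$, so $v$ is by definition a strong solution with initial datum $v_0$; by Lemma \ref{l.uniques}, $v\equiv u$ on $[0,T)$, contradicting (i). Hence at least one of the two fails.

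\textbf{Expected main obstacle.} The delicate point is the bookkeeping in $\mathrm{(iii)}\Leftrightarrow\mathrm{(i)}$: one must be careful that \eqref{e:pd++} is a condition on the \emph{given} strong solution $u$, while the sign condition $I+2B\geq0$ concerns the $B$ attached to the \emph{maximizer}, and these only get identified once one knows $v\equiv u$ and invokes $(T-t)L^*v=B$. So the equivalence with (iii) really has to be routed through the already-proven $\mathrm{(i)}\Leftrightarrow\mathrm{(ii)}$ and Theorem \ref{t:smooth}, rather than proved in isolation; assembling the implications in the right order (and making sure no circularity creeps in, since Theorem \ref{t:smooth}, Theorem \ref{t:ldistr} and Lemma \ref{l.uniques} are all in play) is where the care is needed. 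The individual computations, by contrast, are all either quoted or already done in the proof of Theorem \ref{t:ldistr}.
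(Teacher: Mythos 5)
Your proof is in substance the paper's own argument: the same three ingredients appear in the same roles --- Theorem \ref{t:ldistr} plus Lemma \ref{l.uniques} for i)$\Rightarrow$ii), Theorem \ref{t:smooth} for the passage through iii) (the paper states it as ii)$\Rightarrow$iii), you state it as $\neg$iii)$\Rightarrow\neg$i); same content), and the combination of \eqref{e:bwe} with \eqref{e:bandv} for iii)$\Rightarrow$i) (your $\neg$i)$\Rightarrow\neg$iii)), and your treatment of the final ``Moreover'' sentence via Lemma \ref{l.uniques} and the regularity from Remark \ref{r:time} is exactly right. One caveat: your \emph{direct} argument for ii)$\Rightarrow$i), namely that $v\equiv u$ forces the discrepancy $r$ of \eqref{e:discrr} to vanish ``since the $(I-P)$-part also matches,'' is not justified as stated. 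The computation in the proof of Theorem \ref{t:ldistr} only identifies $Pr=(T-t)\bigl(\p_t v+2P[L^*v.v]\bigr)$; the component $(I-P)r=(I-P)(2B.v+E)$ is not controlled by $v$ satisfying \eqref{e:aeuler2}, and showing it vanishes would require knowing the structure of the maximizer (i.e.\ routing through Theorem \ref{t:smooth} and weak-strong uniqueness), which is precisely what you were trying to shortcut. Fortunately this step is redundant in your scheme: ii)$\Rightarrow$i) already follows from the implications you do establish correctly ($\neg$i)$\Rightarrow\neg$iii) via \eqref{e:bwe}, \eqref{e:bandv}, and $\neg$iii)$\Rightarrow\neg$ii) via Theorem \ref{t:smooth}), so the overall proof stands, and it coincides with the paper's.
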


\begin{proof} The implication i)$\to$ ii) is an immediate consequence of Theorem \ref{t:ldistr}
and Lemma \ref{l.uniques}. The implication ii) $\to$ iii) follows from Theorem \ref{t:smooth}. Finally, assume that iii) holds and i) is violated. Then $u=v$ on $[0,T)$, and from \eqref{e:bwe} and \eqref{e:bandv} we see that $I+ 2(T-t)L^*u(t)\geq 0$, which contradicts iii).   \end{proof}

\begin{remark} Note that the framework of Corollary \ref{c:enlevels} does not necessarily imply a duality gap: the situation $\mathcal I=\mathcal J<TK_0$ is hypothetically possible.  \end{remark}

We complete Section \ref{s:uniq} as well as the whole ``abstract'' part of the article with the following useful observation.  
\begin{remark}[Introducing a linear term into the original problem] \label{r:aoper}  The theory in this paper can be adapted to  the setting \be \label{e:aeulerd}\p_t v+PAv= PL(v\otimes v)
, \quad v(t,\cdot)\in P(X^n), \quad v(0,\cdot)=v_0\in P(X^n)\ee where $$P:X^n\to X^n$$ is any orthogonal projector, and $$L: D(L)\subset X_s^{n\times n}\to X^n,\quad A: D(A)\subset X^n\to X^n$$ are linear operators, satisfying (optionally, as for \eqref{e:acons} above) \be \label{e:aconsd} (L(v\otimes v), v)=(Av,v)=0, \quad v\in P(X^n),\ee for any  sufficiently smooth vector field $v$.  

Indeed, we set $$\tilde v_0=(v_0,1)\in X^{n+1}\simeq X^n \times X,$$
$$\tilde v=(v,1):[0,T]\to X^{n+1},$$ $$\tilde P:X^{n+1}\to X^{n+1},\ \tilde P(v,q)=\left(Pv,\int_\Omega q\,d\mu\right),$$ $$\tilde L:D(\tilde L)\subset X_s^{(n+1)\times(n+1)}\to X^{n+1},\ D(\tilde L)=\left(\begin{array}{@{}c|c@{}}
D(L)& D(A) \\  \hline 
D(A)^\top & X 
\end{array}\right),$$ $$\tilde L\left(\begin{array}{@{}c|c@{}}
M& \upsilon \\  \hline 
\upsilon^\top & q 
\end{array}\right)=\left(\begin{array}{@{}c@{}}
LM-A\upsilon \\  \hline 
0 
\end{array}\right).$$ Tautologically, \be\label{e:taut} \p_t 1=0. \ee The ``system'' \eqref{e:aeulerd}, \eqref{e:taut} can be recast as  \be\label{e:aeulerdd}\p_t \tilde v= \tilde P\tilde L(\tilde v\otimes \tilde v), \quad \tilde v(t,\cdot)\in \tilde P(X^{n+1}), \quad \tilde v(0,\cdot)=\tilde v_0\in \tilde  P(X^{n+1}),\ee which has the structure of \eqref{e:aeuler}. Moreover, any $\tilde v\in \tilde P(X^{n+1})$ can be expressed as $$(v,a)\simeq \left(\begin{array}{@{}c@{}}
v \\  \hline 
a 
\end{array}\right),\quad v\in P(X^{n}),\quad  a=cst.$$ 

If \eqref{e:aconsd} is assumed,  \be \label{e:aconsdd} \left(\tilde L\left(\tilde v \otimes \tilde v\right), \tilde v\right)=\left(\tilde L\left(\left(\begin{array}{@{}c@{}}
v \\  \hline 
a 
\end{array}\right)\otimes \left(\begin{array}{@{}c@{}}
v \\  \hline 
a 
\end{array}\right)\right), \left(\begin{array}{@{}c@{}}
v \\  \hline 
a 
\end{array}\right)\right)=\left(\left(\begin{array}{@{}c@{}}
L(v\otimes v)-aAv \\  \hline 
0
\end{array}\right), \left(\begin{array}{@{}c@{}}
v \\  \hline 
a 
\end{array}\right)\right)=0,\ee i.e., condition \eqref{e:acons} is met for the extended problem. If \eqref{e:invt} holds for $L$, it is valid for $\tilde L$ as well. Indeed, in this situation  we have$$\tilde P\tilde L\left(\begin{array}{@{}c|c@{}}
qI & 0 \\  \hline 
0 & q 
\end{array}\right)=\tilde P\left(\begin{array}{@{}c@{}}
L(qI) \\  \hline 
0 
\end{array}\right)=0$$ for any $q\in X$ sufficiently smooth. As concerns the validity of the approximation condition, we believe that  it is more appropriate to check it directly for $\tilde L$ when applying the theory to a particular PDE rather than to formulate a generic criterion in terms of $L$ and $A$.  \end{remark}


\section{Applications to conservative PDE} \label{Sec3}

The purpose of this section is to prove that the ideal incompressible MHD, the multidimensional Camassa-Holm, EPDiff, Euler-$\alpha$, KdV and Zakharov-Kuznetsov equations, and two models of motion of incompressible elastic fluids satisfy the trace condition and thus meet the requirements of Theorem \ref{t:ex}. We also show that Theorem \ref{t:exweak} is applicable to the template matching equation. All these equations satisfy the approximation condition and either \eqref{e:acons} or \eqref{e:aconsd}, so Theorem \ref{t:smooth} and the results of Section \ref{s:uniq} apply to all of them (with some restrictions in the case of the template matching equation). 

\subsection*{Additional notation and conventions} 
To fix the ideas, in this section we restrict ourselves to the case of the periodic box $\Omega=\mathbb T^d$. Of course, it is also possible to consider an open domain $\Omega\subset \R^d$ of finite volume (in this case $\mu$ would be the normalized Lebesgue measure). In this setting the classes $\mathcal R$ and $\widehat{\mathcal R}$ of \emph{sufficiently smooth} functions can be defined in a conventional way. 

The symbol $\mathcal P$ denotes the Leray-Helmholtz projector in $X^d$. 

The symbol $\R^{(n\times n)\times(n\times n)}$ denotes the space of matrices with matricial entries. For a tensor $\Xi\in \R^{(n\times n)\times(n\times n)}$, define the matrices $\widehat\Xi, \widecheck\Xi\in \R^{n\times n}$ by $$\widehat\Xi_{ij}=\sum_{k} \Xi_{ik,jk},\quad \widecheck\Xi_{ij}=\sum_{k} \Xi_{ki,kj}.$$ For a matrix $M\in \R^{n\times n}$, define the tensors $\widehat M^*, \widecheck M^*\in \R^{(n\times n)\times(n\times n)}$ by $$\widehat M^*_{ik,jl}=M_{ij}\delta_{kl},\quad \widecheck M^*_{ik,jl}=M_{kl}\delta_{ij}.$$ For a tensor $\Upsilon\in \R^{n\times n \times n}$, denote $$\widetilde \Upsilon_{ijk}:=\Upsilon_{ikj}.$$ 
\subsection{Ideal incompressible MHD} The  ideal incompressible MHD equations \cite{Ar98} read
\begin{gather} \label{mhd1} \p_t u+\di (u\otimes u)+\nabla p=\di (b\otimes b),\\ \label{mhd2} 
\p_t b+\di (b\otimes u)=\di (u\otimes b),\\
\label{mhd3}  \di u=0,\\
\label{mhd4}  \di b=0,\\ \label{mhd5} 
u(0)=u_0,\quad b(0)=b_0.\end{gather} The unknowns are $u,b:[0,T]\times \Omega\to \R^d$ and $p:[0,T]\times \Omega\to \R$.  The ideal incompressible MHD equations are the geodesic equations (in our initial-value problem setting  it is more correct to call them the exponential map equations) on the semidirect product of the Lie group of volume-preserving diffeomorphisms with the dual of its Lie algebra \cite{Ar98}.    Since  $$\di\di (b\otimes u)=\di\di (u\otimes b),$$ we can rewrite \eqref{mhd1}, \eqref{mhd2} in the equivalent form \begin{gather} \label{mhd11} \p_t u=\mathcal P(\di (b\otimes b)-\di (u\otimes u)),\\ \label{mhd21} 
\p_t b=\mathcal P(\di (u\otimes b)-\di (b\otimes u)).\end{gather} Set $$n=2d,\ v=(u,b): [0,T]\to X^n\simeq X^d\times X^d,$$ $$P: X^n\to X^n,\quad P(\upsilon,\beta)=(\mathcal P \upsilon, \mathcal P \beta),$$ $$L: D(L)\subset X_s^{n\times n}\to X^n, \quad L\,\left(\begin{array}{@{}c|c@{}}
M& N \\  \hline 
N^\top & S 
\end{array}\right)=\left(\begin{array}{@{}c@{}}
 \di S-\di M \\ \hline
\di N-\di (N^\top)
\end{array}\right).$$ Then \eqref{mhd1}--\eqref{mhd5} becomes the quadratic problem \eqref{e:aeuler}. It is straightforward to check that \eqref{e:acons} holds for $v=(u,b)$ sufficiently smooth. Let $q\in X$ be a sufficiently smooth function. Then $$PL\left(\begin{array}{@{}c|c@{}}
qI& 0 \\  \hline 
0& qI 
\end{array}\right)=P\left(\begin{array}{@{}c@{}}
\nabla q-\nabla q \\  \hline 
0
\end{array}\right)=0.$$ In view of Remark \ref{r:tracel}, Theorem \ref{t:ex} and Remark \ref{r:time} are applicable, and we get \begin{corollary} For any $(u_0,b_0)\in X^d\times X^d$ with $\di u_0=\di b_0=0$, there exists a generalized solution \eqref{e:geuler} to \eqref{mhd1}--\eqref{mhd5}. \end{corollary}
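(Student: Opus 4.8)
The plan is to recognize the ideal incompressible MHD system, written as \eqref{mhd11}--\eqref{mhd21}, as a concrete instance of the abstract equation \eqref{e:aeuler} with the operators $P$ and $L$ displayed above, to check that this instance satisfies the conservativity \eqref{e:acons}, the approximation condition and the trace condition, and then to invoke Theorem \ref{t:ex} together with Remark \ref{r:time}.

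First I would dispatch the structural requirements. Since the Leray--Helmholtz projector $\mathcal P$ is an orthogonal projector on $X^d$, so is $P(\upsilon,\beta)=(\mathcal P\upsilon,\mathcal P\beta)$ on $X^n=X^d\times X^d$ with $n=2d$; and $L$, being a first-order constant-coefficient differential operator, is closed and densely defined on $X^{n\times n}_s$, with $\mathcal R=C^\infty(\mathbb T^d)$ serving as the space of sufficiently smooth functions. The equivalence of the system \eqref{mhd1}--\eqref{mhd5} with \eqref{e:aeuler} is as sketched in the text: applying $\mathcal P$ to \eqref{mhd1}, \eqref{mhd2} removes the pressure, the identity $\di\,\di(b\otimes u)=\di\,\di(u\otimes b)$ permits the symmetrization encoded in the block operator $L$, and \eqref{mhd3}, \eqref{mhd4}, \eqref{mhd5} become $v(t)\in P(X^n)$ and $v(0)=v_0=(u_0,b_0)\in P(X^n)$ for $v=(u,b)$.

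Next I would verify the analytic hypotheses. For \eqref{e:acons}, take $v=(u,b)$ smooth with $\di u=\di b=0$; the matrix $v\otimes v$ has blocks $u\otimes u$, $u\otimes b$, $b\otimes u$, $b\otimes b$, and using $\di(u\otimes u)=(u\cdot\nabla)u$ together with the analogous identities one gets
$$(L(v\otimes v),v)=\int_{\mathbb T^d}\bigl[((b\cdot\nabla)b)\cdot u-((u\cdot\nabla)u)\cdot u+((b\cdot\nabla)u)\cdot b-((u\cdot\nabla)b)\cdot b\bigr]\,d\mu .$$
The second and fourth integrands equal $\tfrac12(u\cdot\nabla)|u|^2$ and $\tfrac12(u\cdot\nabla)|b|^2$, hence integrate to zero, while the first and third combine into $(b\cdot\nabla)(u\cdot b)$, which also integrates to zero since $\di b=0$. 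The approximation condition holds because $L^*$ is likewise a first-order constant-coefficient differential operator, so the mollifications $v_m=\varrho_m*v$ lie in $\mathcal R^n$, converge to $v$ in $X^n$, and satisfy $L^*v_m=\varrho_m*L^*v$, which yields the required uniform $L^\infty$ bound. Finally, the trace condition follows from Remark \ref{r:tracel}: for $q\in\mathcal R$, $L$ applied to the matrix with diagonal blocks $qI$ produces two copies of $\di(qI)=\nabla q$ with opposite signs in the first component and zero in the second, so it vanishes identically, which is even stronger than \eqref{e:invt}.

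With all hypotheses in place, Theorem \ref{t:ex} furnishes a maximizer $(E,B)$ of \eqref{e:conc} in the class \eqref{e:be}, and Remark \ref{r:time} (applicable since \eqref{e:acons} and the approximation condition hold) converts it into the generalized solution
$$v=\frac{1}{T-t}\int_{t}^{T}(-PE)(s)\,ds\in H^1_{loc}\cap C\bigl([0,T);X^n\bigr),\qquad v(t)\in P(X^n),$$
which, read in the original variables $v=(u,b)$, is precisely the asserted generalized solution \eqref{e:geuler} to \eqref{mhd1}--\eqref{mhd5}. I do not expect a genuine obstacle here: the only real computation is the conservativity identity above, and the only thing to be careful about is keeping the block-matrix bookkeeping for $L$ and $L^*$ consistent with the divergence structure of the equations; everything else is a direct application of the abstract theorems.
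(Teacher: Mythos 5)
Your proposal is correct and follows essentially the same route as the paper: identify $P$ and $L$ for the MHD system, verify \eqref{e:acons} and the condition \eqref{e:invt} (so the trace condition follows via Remark \ref{r:tracel}), and then apply Theorem \ref{t:ex} and Remark \ref{r:time}. The only differences are cosmetic: you spell out the conservativity computation that the paper calls straightforward, and you verify the approximation condition, which is not actually needed for this existence statement (only for the consistency and uniqueness results).
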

Note that the restriction \eqref{e:goodv} related to the bidual problem holds provided both $u_0$ and $b_0$ have zero mean (as vector fields). 

\subsection{Multidimensional Camassa-Holm} The $H(\di)$ geodesic equation (sometimes called the multidimensional Camassa-Holm system)  \cite{KS01,GV18} looks like \begin{gather} \label{ch1} \p_t m+(\nabla u)^{\top}.m+\di (m\otimes u)=0,\\
\label{ch2} m=u-\nabla \di u,\\
\label{ch3} u(0)=u_0.\end{gather}
The unknown is $u:[0,T]\times \Omega\to \R^d$. It describes the geodesics of the diffeomorphism group with $H^1_{\di}$ metric, see, e.g., \cite{KLMP}. Another geodesic interpretation was discussed in \cite{GV18}. Apart from that, the $H(\di)$ geodesic equation can be embedded into the incompressible Euler equation on a specifically constructed higher-dimensional manifold \cite{N19,GV18}. We recall (cf. \cite{KLMP,B91}) that, loosely speaking, there is a ``fiber-base'' duality between the Monge-Kantorovich transport \cite{villani03topics} and Euler's equations \eqref{euler1}-\eqref{euler4}. In a similar way, one can think, cf. \cite{GV18}, about a ``fiber-base'' duality between \eqref{ch1}--\eqref{ch3} and the unbalanced optimal transport \cite{KMV16A,CP18,LMS16}.

We now define the relevant projector. Namely, for each $(\upsilon,\sigma)\in X^{d+1}\simeq X^d\times X$, we consider its orthogonal projection over the vector fields of the form $(u,\di u)$. This is related to the ``duality'' above and to the unbalanced version of Brenier's polar factorization theorem \cite{B91} that was discussed in preliminary preprint versions of \cite{GV18}. The explicit expression of the projector is \be P:X^{d+1}\to X^{d+1}, \quad P\left(\begin{array}{@{}c@{}}
\upsilon \\  \hline 
\sigma
\end{array}\right)=\mathcal P_{\di}\left(\begin{array}{@{}c@{}}
\upsilon \\  \hline 
\sigma
\end{array}\right):=\left(\begin{array}{@{}c@{}}
\upsilon-\nabla (I-\Delta)^{-1}(\sigma-\di\upsilon) \\  \hline 
\sigma-(I-\Delta)^{-1}(\sigma-\di\upsilon)
\end{array}\right).\ee

Set $$n=d+1,\ v=(u,\di u): [0,T]\to X^n\simeq X^d\times X,$$  $$L: D(L)\subset X_s^{n\times n}\to X^n, \quad L\,\left(\begin{array}{@{}c|c@{}}
M& \upsilon \\  \hline 
\upsilon^\top & q
\end{array}\right)=\left(\begin{array}{@{}c@{}}
 -\di M \\ \hline
-\di \upsilon+\frac 1 2 \tr M+\frac 1 2 q
\end{array}\right).$$  

We claim that the Camassa-Holm system \eqref{ch1}--\eqref{ch3} is tantamount to the abstract Euler equation \eqref{e:aeuler} with $P$ and $L$ just defined. Indeed, denote $p:=\di u$, $p_0:=\di u_0$ in \eqref{ch1}--\eqref{ch3}. After some calculations, one finds that \eqref{ch1}--\eqref{ch3} is equivalent to \begin{gather} \label{ch4} \p_t u=-\di (u\otimes u)+\nabla\left[\p_t p+\di (up)-\frac 1 2 |u|^2-\frac 1 2 p^2\right],\\
\label{ch5} p=\di u,\\
\label{ch6} u(0)=u_0,\, p(0)=p_0.\end{gather}
Tautologically, 
\be \label{ch7} \p_t p= -\di (up)+\frac 1 2 |u|^2+\frac 1 2 p^2+\left[\p_t p+\di (up)-\frac 1 2 |u|^2-\frac 1 2 p^2\right].\ee
The system \eqref{ch4}--\eqref{ch7} can be rewritten as
\be  \label{e:aeulerch}\p_t v= L(v\otimes v)+\left(\begin{array}{@{}c@{}}
 \nabla \xi \\ \hline
\xi
\end{array}\right),\quad v(0)=v_0,\ee where $$v(t)=\left(\begin{array}{@{}c@{}}
 u(t) \\ \hline
p(t)
\end{array}\right)\in P(X^n)$$ and \be \label{e:xi} \xi=\p_t p+\di (up)-\frac 1 2 |u|^2-\frac 1 2 p^2.\ee Applying the projector $P$ to both sides of \eqref{e:aeulerch}, we get \eqref{e:aeuler}. Reciprocally, \eqref{e:aeuler} implies \eqref{e:aeulerch} where $\xi$ necessarily satisfies \eqref{e:xi} due to \eqref{ch7}.

A not very tedious calculation verifies \eqref{e:acons} for $v=(u,\di u)$ sufficiently smooth. However, $$PL\left(\begin{array}{@{}c|c@{}}
qI& 0 \\  \hline 
0 & q
\end{array}\right)=P\left(\begin{array}{@{}c@{}}
 -\nabla q \\ \hline
\frac {d+1} 2 q
\end{array}\right),$$ which yields that the requirement \eqref{e:invt} is not met, and we need to find another way to secure the trace condition. It will be based on the following simple multidimensional variant of the Gr\"onwall-Bellman lemma. \begin{lemma} \label{l:gr} Consider a function $\psi\in W^{1,1}(\mathbb T^d)$ such that a.e. in $\mathbb T^d$ one has \be \label{e:ghyp} |\nabla \psi(x)|\le c\psi(x)\ee  with a constant $c$. Then $\psi\in C(\mathbb T^d)$, and \be \label{e:gcons} |\psi(x)|\leq e^{\frac {c\sqrt d}2 } \int_{\mathbb T^d} \psi(y)\,dy, \quad x\in \mathbb T^d. \ee \begin{proof} By Sobolev embedding, $\psi\in L^p(\mathbb T^d)$, $1 -\frac 1 n=\frac 1 p$, whence $\psi\in W^{1,p}(\mathbb T^d)$. Bootstrapping, we derive that $\psi\in W^{1,\infty}(\mathbb T^d)\subset C(\mathbb T^d)$. Consequently, $\log \psi\in W^{1,\infty}(\mathbb T^d)$ because \be \label{e:ghypl} |\nabla \log \psi(x)|\le c\ee due to \eqref{e:ghyp}. 
Since $|\mathbb T^d|=1$, there is $x^0\in \mathbb T^d$ such that $\psi(x^0)=\int_{\mathbb T^d} \psi(y)\,dy$. By \eqref{e:ghypl}, \be \label{e:ghyp1} |\log \psi(x)-\log \psi(x^0)|\le c|x-x^0|\leq \frac {c\sqrt d}2, \quad x\in \mathbb T^d,\ee which implies \eqref{e:gcons}. \end{proof}\end{lemma}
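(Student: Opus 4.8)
The plan is to bootstrap the regularity of $\psi$ from $W^{1,1}$ up to $W^{1,\infty}$, then observe that the hypothesis \eqref{e:ghyp} forces $\log\psi$ to be $c$-Lipschitz, so that $\psi$ cannot exceed its average by more than the factor $e^{c\sqrt d/2}$, the exponent being $c$ times the diameter of $\mathbb T^d$.

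For the regularity step I would first note that \eqref{e:ghyp} together with $|\nabla\psi|\ge 0$ forces $\psi\ge 0$ a.e., and that $\|\nabla\psi\|_{L^q}\le c\|\psi\|_{L^q}$ whenever the right-hand side is finite; hence $\psi\in L^q(\mathbb T^d)$ already implies $\psi\in W^{1,q}(\mathbb T^d)$. Starting from $W^{1,1}(\mathbb T^d)\hookrightarrow L^{d/(d-1)}(\mathbb T^d)$ and iterating the Sobolev chain finitely many times, the integrability exponent climbs past $d$; then $W^{1,p}(\mathbb T^d)\hookrightarrow C(\mathbb T^d)$ for $p>d$, so $\psi\in L^\infty$, and one more iteration yields $\psi\in W^{1,\infty}(\mathbb T^d)\subset C(\mathbb T^d)$. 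This is entirely routine.

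With $\psi$ continuous, nonnegative and Lipschitz, I would dispatch the degenerate case first: if $\psi(x_0)=0$ for some $x_0$, integrating the differential inequality $|\tfrac{d}{dt}\psi(\gamma(t))|\le c\,\psi(\gamma(t))$ along a unit-speed geodesic $\gamma$ from $x_0$ to an arbitrary point and invoking the one-dimensional Gr\"onwall lemma gives $\psi\equiv 0$, so \eqref{e:gcons} is trivial. Otherwise $\psi>0$ everywhere, whence $\log\psi\in W^{1,\infty}(\mathbb T^d)$ with $|\nabla\log\psi|=|\nabla\psi|/\psi\le c$ a.e., i.e. $\log\psi$ is $c$-Lipschitz for the geodesic distance on the torus. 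Since $|\mathbb T^d|=1$, the number $\int_{\mathbb T^d}\psi$ lies between $\min_{\mathbb T^d}\psi$ and $\max_{\mathbb T^d}\psi$, so by continuity there is $x^0$ with $\psi(x^0)=\int_{\mathbb T^d}\psi$. Combining the Lipschitz bound with $\operatorname{diam}(\mathbb T^d)=\sqrt d/2$ gives $|\log\psi(x)-\log\psi(x^0)|\le c|x-x^0|\le c\sqrt d/2$ for every $x$, and exponentiating yields \eqref{e:gcons}.

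There is no genuine obstacle here; the only points deserving a little care are the treatment of possible zeros of $\psi$, which the bare ``take $\log\psi$'' move silently assumes away, and the bookkeeping that the relevant Lipschitz constant is measured in the intrinsic torus metric, whose diameter is $\sqrt d/2$ rather than that of the ambient $\R^d$.
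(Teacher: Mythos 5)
Your proof follows essentially the same route as the paper's: bootstrap from $W^{1,1}$ to $W^{1,\infty}\subset C(\mathbb T^d)$, observe that \eqref{e:ghyp} makes $\log\psi$ $c$-Lipschitz, pick $x^0$ with $\psi(x^0)=\int_{\mathbb T^d}\psi$ using $|\mathbb T^d|=1$, and conclude via the diameter bound $\sqrt d/2$. The only difference is that you explicitly dispatch the possibility that $\psi$ vanishes somewhere (via Gr\"onwall along geodesics), a case the paper's proof passes over silently when taking $\log\psi$; this is a minor but correct refinement, not a different argument.
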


We return to the Camassa-Holm system. The adjoint operator is

$$L^*: D(L^*)\subset X^n\to X_s^{n\times n}, \quad L^*\,\left(\begin{array}{@{}c@{}}
 \phi\\ \hline
\chi
\end{array}\right)=\frac 1 2 \left(\begin{array}{@{}c|c@{}}
\nabla \phi +(\nabla \phi)^\top+\chi I& \nabla \chi \\  \hline 
(\nabla \chi)^\top & \chi
\end{array}\right).$$  If $(\phi,\chi) \in P(X^n)$, then $\chi=\di \phi$. If the eigenvalues of \be \label{e:mat} \left(\begin{array}{@{}c|c@{}}
\nabla \phi +(\nabla \phi)^\top+\chi I& \nabla \chi \\  \hline 
(\nabla \chi)^\top & \chi
\end{array}\right)(x),\quad \chi=\di \phi,\ee are bounded from below, there is $k\ge 0$ such that $$\left(\begin{array}{@{}c|c@{}}
\nabla \phi +(\nabla \phi)^\top+(\chi+k) I& \nabla \chi \\  \hline 
(\nabla \chi)^\top & \chi+k
\end{array}\right)(x)\geq 0.$$ 
In particular, $\chi+k\geq 0$. Moreover,
considering the principal minors of order $2$, we see that $$(\chi+k+2\p_{x_i}\phi_i)(\chi+k) \geq (\p_{x_i} \chi)^2.$$ Thus, $$3(\chi+k)^2=(3\chi+3k)(\chi+k)\geq (3\chi+k)(\chi+k)\geq  |\nabla \chi|^2.$$ Since $\int_{\mathbb T^d} \chi(y)\,dy=0$, Lemma \ref{l:gr} implies that \be \label{e:gx} \chi(x)+ k \leq k e^{\frac {\sqrt {3d}} 2} , \quad x\in \mathbb T^d. \ee This provides a uniform bound on the trace of the matrix in \eqref{e:mat}. Hence, the eigenvalues of this matrix are bounded from above, and the trace condition holds. We infer
 \begin{corollary} For every $u_0\in X^d$, there exists a generalized solution \eqref{e:geuler} to \eqref{ch1}--\eqref{ch3}. \end{corollary}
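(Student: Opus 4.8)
The plan is to obtain the corollary as a direct assembly of \textbf{Theorem \ref{t:ex}} and \textbf{Remark \ref{r:time}} applied to the pair $(P,L)$ identified above, since all the structural hypotheses have in fact already been checked. First I would fix $u_0\in X^d$ and produce an admissible initial datum for the abstract problem: take $v_0:=P(u_0,0)\in P(X^n)$. Since $P=\mathcal P_{\di}$ maps $X^n$ onto the fields of the form $(w,\di w)$ with $w\in X^d$, $\di w\in X$, this $v_0$ is legitimate, and whenever $\di u_0\in X$ one may simply put $v_0=(u_0,\di u_0)$; the precise choice is inessential, as the generalized solution \eqref{e:geuler} may anyway violate the initial condition (cf.\ Remark \ref{r:time}).

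Next I would invoke the trace condition, which has just been verified above: the principal minors of order $2$ of the matrix in \eqref{e:mat} force $3(\chi+k)^2\ge|\nabla\chi|^2$ as soon as its eigenvalues are bounded below by $-k$ (with $\chi=\di\phi$), and since $\int_{\mathbb T^d}\chi=0$, Lemma \ref{l:gr} upgrades this to the uniform bound \eqref{e:gx} on $\chi+k$, hence to a uniform bound on the trace of that matrix. Thus a lower bound on the spectrum of $L^*\zeta$, $\zeta\in D(L^*)\cap P(X^n)$, yields an upper bound on it, i.e.\ Definition \ref{deftr} holds. Consequently Theorem \ref{t:ex} applies and provides a maximizer $(E,B)$ of \eqref{e:conc} in the class \eqref{e:be}, with $0\le\mathcal J(v_0,T)<+\infty$.

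Finally, to extract the generalized solution I would appeal to Remark \ref{r:time}, whose hypothesis \eqref{e:acons} has been checked above for sufficiently smooth $v=(u,\di u)$; moreover the approximation condition holds because $L^*$ is a first-order differential operator on $\mathbb T^d$, so mollification supplies the required sequence (and hence Theorem \ref{t:smooth} and the results of Section \ref{s:uniq} apply as well). Setting $V:=-PE$ and $v:=\frac1{T-t}\int_t^T V(s)\,ds$ then yields $v\in H^1_{loc}\cap C([0,T);X^n)$ with $v(t)\in P(X^n)$ and $(T-t)L^*v\in L^\infty((0,T)\times\Omega;\R^{n\times n}_s)$ --- the same regularity class as the strong solutions --- together with \eqref{e:bandv}. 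Writing $v=(u,\di u)$, its velocity component $u$ is the asserted generalized solution \eqref{e:geuler} to \eqref{ch1}--\eqref{ch3}.

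The only genuine obstacle here is the trace condition itself, and that is precisely the minors-plus-Lemma-\ref{l:gr} argument already carried out; once it is in place the corollary is pure bookkeeping built on Theorem \ref{t:ex} and Remark \ref{r:time}. The single point deserving a line of care is the passage between the abstract object $v$ and the original $H(\di)$-geodesic system: the equivalence of \eqref{ch1}--\eqref{ch3} with \eqref{e:aeuler} was derived through \eqref{e:aeulerch}--\eqref{e:xi} at the level of (sufficiently smooth) solutions, so what \eqref{e:geuler} delivers is a generalized solution exactly in that sense --- which, as Remark \ref{r:time} emphasizes, need not satisfy $v(0)=v_0$ nor the first equality in \eqref{e:aeuler2} pointwise.
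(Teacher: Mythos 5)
Your proposal is correct and follows essentially the same route as the paper: the corollary is obtained by combining the trace condition (verified via the order-$2$ principal minors giving $3(\chi+k)^2\ge|\nabla\chi|^2$ and Lemma \ref{l:gr} with $\int_{\mathbb T^d}\chi=0$) with Theorem \ref{t:ex} to produce a maximizer $(E,B)$ in the class \eqref{e:be}, and then with the conservativity \eqref{e:acons} and Remark \ref{r:time} to define the generalized solution \eqref{e:geuler}. Your remarks on the choice of $v_0$ and on the possible violation of the initial condition are consistent with the paper's treatment.
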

 
 Note that \eqref{e:goodv} holds provided $u_0$ has zero mean.

\subsection{EPDiff} The EPDiff equations \cite{EPD05,EPD09,YO10,EPD13,MM13,EPD16} are
\begin{gather} \label{epd1} \p_t m+(\nabla u)^{\top}.m+\di (m\otimes u)=0,\\ \label{epd2}
m=u-\Delta u,\\ \label{epd3}
u(0)=u_0.\end{gather} The unknown is $u:[0,T]\times \Omega\to \R^d$. The EPDiff equations are the Euler-Arnold equations on the diffeomorphism group with $H^1$ metric, see, e.g., \cite{KLMP}. 

For each $(\upsilon,M)\in X^{d(1+d)}\simeq X^d\times X^{d\times d}$, we consider its orthogonal projection over the fields of the form $(u,\nabla u)$. This is related to a (non-ballistic) matricial optimal transport problem, cf. \cite[Remark C1]{BV18}. The explicit expression of the projector is \be \label{e:blo} P:X^{d(1+d)}\to X^{d(1+d)}, \quad \quad P\left(\begin{array}{@{}c@{}}
\upsilon \\  \hline 
M
\end{array}\right)=\mathcal P_{\nabla}\left(\begin{array}{@{}c@{}}
\upsilon \\  \hline 
M
\end{array}\right):=\left(\begin{array}{@{}c@{}}
\upsilon-\di (I-\nabla \di)^{-1}(M-\nabla\upsilon) \\  \hline 
M-(I-\nabla \di)^{-1}(M-\nabla\upsilon)
\end{array}\right).\ee
\begin{remark}[Consistency of \eqref{e:blo}] The operator $(I-\nabla \di)^{-1}$ can be viewed as the Riesz isomorphism between the Hilbert spaces $E^*$ and $E$, where $E:=\{M\in X^{d\times d}| \di M\in X^d\}$ is equipped with the scalar product $(M,N)_{E}=(M,N)+(\di M,\di N)$, cf. \cite{Te79}. Consequently, \eqref{e:blo} defines a bounded linear operator on $X^{d(1+d)}$. \end{remark}
Set $$n=d(1+d),\ v=(u,\nabla u): [0,T]\to X^n\simeq X^d\times X^{d\times d},$$  $$L: D(L)\subset X_s^{n\times n}\to X^n, \quad L\,\left(\begin{array}{@{}c|c@{}}
M& \Upsilon \\  \hline 
\Upsilon^\top & \Xi
\end{array}\right)=\left(\begin{array}{@{}c@{}}
0 \\ \hline
-\di (\Upsilon^\top)+M+\widehat\Xi-\widecheck\Xi+\frac 1 2 I\tr M+\frac 1 2 I\tr \widehat\Xi
\end{array}\right).$$  

Let us now interpret the EPDiff equations as an abstract Euler equation. Denote $G:=\nabla u$, $G_0:=\nabla u_0$ in \eqref{epd1}--\eqref{epd3}. A tedious calculation shows that \eqref{epd1}--\eqref{epd3} is equivalent to \begin{gather} \notag \p_t u=  \di\Big[\p_t G+\di (G\otimes u)-u\otimes u-\widehat{(G\otimes G)}+\widecheck{(G\otimes G)}\\-\frac 1 2 I\tr u\otimes u-\frac 1 2 I\tr \widehat{(G\otimes G)}\Big],\label{epd4}\\
\label{epd5} G=\nabla u,\\
\label{epd6} u(0)=u_0,\, G(0)=G_0.\end{gather}
Tautologically, 
\begin{multline} \label{epd7} \p_t G= -\di (G\otimes u)+u\otimes u+\widehat{(G\otimes G)}-\widecheck{(G\otimes G)}+\frac 1 2 I\tr u\otimes u\\+\frac 1 2 I\tr \widehat{(G\otimes G)}  +\Big[\p_t G+\di (G\otimes u)-u\otimes u-\widehat{(G\otimes G)}+\widecheck{(G\otimes G)}\\-\frac 1 2 I\tr u\otimes u-\frac 1 2 I\tr \widehat{(G\otimes G)}\Big].\end{multline}
The system \eqref{epd4}--\eqref{epd7} can be rewritten as
\be  \label{e:aeulerepd}\p_t v= L(v\otimes v)+\left(\begin{array}{@{}c@{}}
 \di \xi \\ \hline
\xi
\end{array}\right),\quad v(0)=v_0,\ee where $$v(t)=\left(\begin{array}{@{}c@{}}
 u(t) \\ \hline
G(t)
\end{array}\right)\in P(X^n)$$ and \be \label{e:xie} \xi=\p_t G+\di (G\otimes u)-u\otimes u-\widehat{(G\otimes G)}+\widecheck{(G\otimes G)}\\-\frac 1 2 I\tr u\otimes u-\frac 1 2 I\tr \widehat{(G\otimes G)}.\ee Applying the projector $P$ to both sides of \eqref{e:aeulerepd}, we get \eqref{e:aeuler}. Reciprocally, \eqref{e:aeuler} implies \eqref{e:aeulerepd} where $\xi$ necessarily satisfies \eqref{e:xie} due to \eqref{epd7}.

A direct calculation shows that \eqref{e:acons} holds for $v=(u,\nabla u)$ sufficiently smooth, but the requirement \eqref{e:invt} is not met. 

The adjoint operator is
$$L^*: D(L^*)\subset X^n\to X_s^{n\times n}, \quad L^*\,\left(\begin{array}{@{}c@{}}
 \phi\\ \hline
\Phi
\end{array}\right)=\frac 1 2 \left(\begin{array}{@{}c|c@{}}
\Phi+\Phi^\top+I\tr \Phi & \nabla\Phi \\  \hline 
(\nabla \Phi)^\top & \widehat \Phi^*+\widehat{(\Phi^\top)}^*-\widecheck \Phi^*-\widecheck{(\Phi^\top)}^*+(\tr \Phi) \widecheck{I}^*
\end{array}\right).$$  If $(\phi,\Phi) \in P(X^n)$, then $\Phi=\nabla \phi$. If the eigenvalues of \be \label{e:maed} \left(\begin{array}{@{}c|c@{}}
\Phi+\Phi^\top+I\tr \Phi & \nabla\Phi \\  \hline 
(\nabla \Phi)^\top & \widehat \Phi^*+\widehat{(\Phi^\top)}^*-\widecheck \Phi^*-\widecheck{(\Phi^\top)}^*+(\tr \Phi) \widecheck{I}^*
\end{array}\right)(x),\quad \Phi=\nabla \phi,\ee are bounded from below, there is $k\ge 0$ such that $$\left(\begin{array}{@{}c|c@{}}
\Phi+\Phi^\top+(k+\tr \Phi)I & \nabla\Phi \\  \hline 
(\nabla \Phi)^\top & \widehat \Phi^*+\widehat{(\Phi^\top)}^*-\widecheck \Phi^*-\widecheck{(\Phi^\top)}^*+(k+\tr \Phi) \widecheck{I}^*
\end{array}\right)(x)\geq 0.$$ 
Taking the trace of the last block, we deduce that $k+\tr \Phi\geq 0$. Moreover,
the non-negativity of the principal minors of order $2$ yields $$(k+\tr\Phi+2\Phi_{ii})(2\Phi_{jj}-2\Phi_{ll}+k+\tr\Phi) \geq (\p_{x_i} \Phi_{jl})^2.$$ Letting $j=l$ and performing the summation w.r.t. to the remaining indices, we arrive at $$3(k+\tr\Phi)^2\ge (k+3\tr\Phi)(k+\tr\Phi) \geq |\nabla \tr \Phi|^2.$$ But $\Phi=\nabla \phi$, so $\int_{\mathbb T^d} \tr\Phi(y)\,dy=0$. As in the Camassa-Holm case above, Lemma \ref{l:gr} implies a uniform bound on $\tr \Phi$ and thus on the trace of the matrix in \eqref{e:maed}. This yields the trace condition, and leads to \begin{corollary} For every $u_0\in X^d$, there exists a generalized solution \eqref{e:geuler} to \eqref{epd1}--\eqref{epd3}. \end{corollary}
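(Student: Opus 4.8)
The plan is to invoke the abstract machinery assembled earlier, the only substantial input being the verification of the trace condition for the EPDiff operator $L$, which has just been carried out in the preceding discussion. First I would record that the EPDiff system \eqref{epd1}--\eqref{epd3} has been recast as the abstract Euler equation \eqref{e:aeuler} with $n=d(1+d)$, the projector $P=\mathcal P_\nabla$ of \eqref{e:blo}, and the closed densely defined operator $L$ written above; this reduction uses the tautological identity \eqref{epd7} and the identification of $\xi$ via \eqref{e:xie}, so that \eqref{e:aeuler} is genuinely equivalent to \eqref{epd4}--\eqref{epd7} and hence to the original equations. I would also note that \eqref{e:acons} holds for vector fields $v=(u,\nabla u)$ drawn from $\mathcal R$, as a direct computation shows, and that the approximation condition for $L^*$ follows from the standard mollification argument, $L^*$ being a first-order differential operator with constant coefficients.

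The heart of the proof is the trace condition, Definition \ref{deftr}: I would take $\zeta=(\phi,\Phi)\in D(L^*)\cap P(X^n)$, so that necessarily $\Phi=\nabla\phi$, and suppose that the eigenvalues of the symmetric matrix field $L^*\zeta(x)$, which is the matrix in \eqref{e:maed}, are bounded below a.e. in $\Omega$ by some $-k$ with $k\ge 0$. Adding $kI$ produces a nonnegative-definite matrix field; taking the trace of its lower-right block yields $k+\tr\Phi\ge 0$, and exploiting the nonnegativity of the order-two principal minors coupling the $\Phi_{ii}$-entries with the $\partial_{x_i}\Phi_{jj}$-entries, followed by summation over indices, gives the pointwise differential inequality $|\nabla\tr\Phi|^2\le 3(k+\tr\Phi)^2$. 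Since $\Phi=\nabla\phi$ on the torus, $\int_{\mathbb T^d}\tr\Phi\,dy=0$, so Lemma \ref{l:gr} applied to $\psi=k+\tr\Phi$ furnishes a uniform pointwise upper bound $k+\tr\Phi\le k\,e^{\sqrt{3d}/2}$, hence a uniform bound on $\tr\bigl(L^*\zeta+kI\bigr)$ and therefore on the largest eigenvalue of $L^*\zeta$; this is exactly the trace condition.

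With the trace condition in hand, Theorem \ref{t:ex} applies to the present $L$: for the given $u_0\in X^d$, setting $v_0=(u_0,\nabla u_0)\in P(X^n)$, there is a maximizer $(E,B)$ of \eqref{e:conc} in the class \eqref{e:be}, with $0\le\mathcal J(v_0,T)<+\infty$. Finally, since \eqref{e:acons} holds, Remark \ref{r:time} lets me set $V:=-PE$ and define the generalized solution by \eqref{e:geuler}, $v=\frac{1}{T-t}\int_t^T V(s)\,ds$; Remark \ref{r:time} guarantees $v\in H^1_{loc}\cap C([0,T);X^n)$ with $v(t)\in P(X^n)$, and that $v$ lies in the regularity class \eqref{e:strongclass} of the strong solutions. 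Reading off the first $d$ components of $v$ (i.e.\ undoing the identification $v=(u,\nabla u)$) yields the claimed generalized solution of \eqref{epd1}--\eqref{epd3}.

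The main obstacle — and really the only nontrivial point — is the trace-condition verification: one must set up the correct order-two principal-minor inequalities for the specific block structure of $L^*$ in \eqref{e:maed} and extract precisely the gradient bound on $\tr\Phi$ that feeds Lemma \ref{l:gr}. Everything else is a direct appeal to Theorem \ref{t:ex} together with Remark \ref{r:time}.
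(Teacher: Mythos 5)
Your proposal matches the paper's own argument essentially step for step: the same recasting of EPDiff as the abstract Euler equation with $P=\mathcal P_\nabla$, the same verification of the trace condition via the order-two principal minors of the matrix in \eqref{e:maed}, the resulting bound $|\nabla \tr\Phi|^2\lesssim (k+\tr\Phi)^2$ combined with the zero mean of $\tr\Phi=\di\phi$ and Lemma \ref{l:gr}, and finally the appeal to Theorem \ref{t:ex} and Remark \ref{r:time} to produce the generalized solution \eqref{e:geuler}. The only cosmetic differences are that you also check the approximation condition (not needed for this existence statement, only for the consistency/uniqueness results) and that you write $v_0=(u_0,\nabla u_0)$ explicitly, which is the same implicit identification of the initial datum made in the paper.
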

The restriction \eqref{e:goodv} holds provided $u_0$ has zero mean. 
\subsection{Euler-$\alpha$}
The Euler-$\alpha$ equations \cite{kapr,hmr2,mrs,sk} (with $\alpha=1$ for definiteness) may be written as
\begin{gather} \label{eal1} \p_t m+(\nabla u)^{\top}.m+\di (m\otimes u)+\nabla p=0,\\ \label{eal2}
m=u-\Delta u,\\
\di u=0, \label{eal3}\\ \label{eal4}
u(0)=u_0.\end{gather} The unknowns are $u:[0,T]\times \Omega\to \R^d$ and $p:[0,T]\times \Omega\to \R$. These equations are the geodesic equations (more precisely, the exponential map equations) on the group of volume-preserving diffeomorphisms with $H^1$ metric, see, e.g., \cite{KLMP}.  This example is quite similar to the previous one. We first recast \eqref{eal1}--\eqref{eal4} as \begin{gather} \notag \p_t u+\nabla p=  \di\Big[\p_t G+\di (G\otimes u)-u\otimes u-\widehat{(G\otimes G)}+\widecheck{(G\otimes G)}\\-\frac 1 2 I\tr u\otimes u-\frac 1 2 I\tr \widehat{(G\otimes G)}\Big],\label{eal5}\\
 \p_t G= -\di (G\otimes u)+u\otimes u+\widehat{(G\otimes G)}-\widecheck{(G\otimes G)}+\frac 1 2 I\tr u\otimes u \notag \\+\frac 1 2 I\tr \widehat{(G\otimes G)}  +\Big[\p_t G+\di (G\otimes u)-u\otimes u-\widehat{(G\otimes G)}+\widecheck{(G\otimes G)}\notag \\-\frac 1 2 I\tr u\otimes u-\frac 1 2 I\tr \widehat{(G\otimes G)}\Big].\label{eal6}\\
\label{eal7} G=\nabla u,\\
\label{eal8} \tr G=0,\\
\label{eal9} u(0)=u_0,\, G(0)=G_0,\end{gather}
cf. \eqref{epd4}--\eqref{epd7}. Set $$n=d(1+d),\ v=(u,G): [0,T]\to X^n\simeq X^d\times X^{d\times d},$$  $$L: D(L)\subset X_s^{n\times n}\to X^n, \quad L\,\left(\begin{array}{@{}c|c@{}}
M& \Upsilon \\  \hline 
\Upsilon^\top & \Xi
\end{array}\right)=\left(\begin{array}{@{}c@{}}
0 \\ \hline
-\di (\Upsilon^\top)+M+\widehat\Xi-\widecheck\Xi+\frac 1 2 I\tr M+\frac 1 2 I\tr \widehat\Xi
\end{array}\right).$$ 

Consider the set $$Y:=\mathcal P_\nabla X^n\cap \{(u,G)| \tr G=0\},$$ where $\mathcal P_\nabla$ was defined in \eqref{e:blo}. It is clear that $Y$ is a closed linear subspace of $X^n$. Let $$P:X^n\to Y$$ be the corresponding orthogonal projector. The orthogonal complement of $Y$ consists of the elements of the form $(\di \xi,
\xi+qI)$, $\xi\in X^{d\times d}$, $q\in X$. Rewrite the system \eqref{eal5}--\eqref{eal9} as
\be  \label{e:aeulereal}\p_t v= L(v\otimes v)+\left(\begin{array}{@{}c@{}}
 \di \xi \\ \hline
\xi+pI
\end{array}\right),\quad v(0)=v_0,\ee where $$v(t)=\left(\begin{array}{@{}c@{}}
 u(t) \\ \hline
G(t)
\end{array}\right)\in P(X^n)$$ and \be \label{e:ieal} \xi=\p_t G+\di (G\otimes u)-u\otimes u-\widehat{(G\otimes G)}+\widecheck{(G\otimes G)}\\-\frac 1 2 I\tr u\otimes u-\frac 1 2 I\tr \widehat{(G\otimes G)}-pI.\ee
Applying the projector $P$ to both sides of \eqref{e:aeulereal}, we get the abstract Euler equation \eqref{e:aeuler}. Reciprocally, \eqref{e:aeuler} implies \eqref{e:aeulereal} where $\xi$ necessarily satisfies \eqref{e:ieal} due to the trivial equality \eqref{eal6}.

As in the previous example, \eqref{e:acons} holds for $v=(u,\nabla u)$ sufficiently smooth. In contrast to EPDiff, \eqref{e:invt} is now valid since $$PL\left(\begin{array}{@{}c|c@{}}
qI& 0 \\  \hline 
0 & q \widecheck{I}^*
\end{array}\right)=P\left(\begin{array}{@{}c@{}}
0 \\ \hline
(d+1)qI
\end{array}\right)=0.$$ Thus we have \begin{corollary} For every $u_0\in X^d$, $\di u_0=0$, there exists a generalized solution \eqref{e:geuler} to \eqref{eal1}--\eqref{eal4}. \end{corollary}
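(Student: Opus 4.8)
The plan is to read the corollary off directly from Theorem \ref{t:ex} and Remark \ref{r:time}, exactly as in the three preceding examples. All the structural ingredients are already in place from the discussion above: the Euler-$\alpha$ system \eqref{eal1}--\eqref{eal4} has been recast as the abstract Euler equation \eqref{e:aeuler} for the pair $v=(u,G)$, with $P$ the orthogonal projector onto $Y$ and $L$ as displayed; the formal conservativity \eqref{e:acons} has been checked for sufficiently smooth $v=(u,\nabla u)$; and the invariance identity \eqref{e:invt} has been verified via the computation $PL(qI,0;0,q\widecheck{I}^{*})=P(0;(d+1)qI)=0$ displayed just before the statement. First, then, I would invoke Remark \ref{r:tracel}: since \eqref{e:invt} holds, $\tr(L^{*}\zeta)$ vanishes a.e. for $\zeta\in P(X^{n})$, so $L$ satisfies the trace condition of Definition \ref{deftr}, and Theorem \ref{t:ex} becomes applicable to $L$.

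Next I would check that the prescribed data are admissible. Given $u_{0}\in X^{d}$ with $\di u_{0}=0$, set $v_{0}:=(u_{0},\nabla u_{0})$. Then $v_{0}\in Y=P(X^{n})$: it is already of the form $(u,\nabla u)$, hence a fixed point of $\mathcal P_{\nabla}$, and $\tr(\nabla u_{0})=\di u_{0}=0$, so it also lies in $\{(u,G)\mid \tr G=0\}$. Theorem \ref{t:ex} then yields a maximizer $(E,B)$ of \eqref{e:conc} in the class \eqref{e:be}, with $0\le\mathcal J(v_{0},T)<+\infty$.

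Finally, since \eqref{e:acons} holds, Remark \ref{r:time} applies verbatim: putting $V:=-PE$ and $v:=\frac{1}{T-t}\int_{t}^{T}V(s)\,ds$, one obtains the generalized solution \eqref{e:geuler}, which lies in $H^{1}_{loc}\cap C([0,T);X^{n})$ and satisfies $v(t)\in P(X^{n})$ for a.e.\ $t$ together with $(T-t)L^{*}v\in L^{\infty}((0,T)\times\Omega;\R^{n\times n}_{s})$. Because $v(t)\in Y$, it automatically has the form $(u(t),\nabla u(t))$ with $\di u(t)=0$, so its first component $u$ is the desired generalized solution of \eqref{eal1}--\eqref{eal4}, living in the same regularity class as the strong solutions. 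I expect no genuine obstacle here: the entire difficulty was the reformulation of Euler-$\alpha$ as \eqref{e:aeuler} and the verification of \eqref{e:invt}, both already carried out; the only point asking for a moment's care is the membership $v_{0}\in P(X^{n})$, which reduces to observing that $(u_{0},\nabla u_{0})$ is fixed by $\mathcal P_{\nabla}$ and that the trace-free constraint on $G$ is precisely incompressibility of $u$.
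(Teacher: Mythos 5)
Your argument is exactly the paper's: the displayed computation shows \eqref{e:invt}, hence the trace condition via Remark \ref{r:tracel}, so Theorem \ref{t:ex} yields a maximizer $(E,B)$ in the class \eqref{e:be}, and since \eqref{e:acons} was checked, Remark \ref{r:time} produces the generalized solution \eqref{e:geuler}; your identification of the admissible data and of the form $(u,\nabla u)$ with $\tr\nabla u=\di u=0$ for elements of $Y$ is the intended reading. (The only caveat, inherited from the paper's own phrasing rather than introduced by you, is that writing $v_0=(u_0,\nabla u_0)\in P(X^n)$ implicitly uses $\nabla u_0\in X^{d\times d}$.)
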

Note that \eqref{e:goodv} holds provided $u_0$ has zero mean. 

\subsection{Incompressible isotropic Hookean
elastodynamics} The ``neo-Hookean'' model of motion of incompressible isotropic elastic fluid \cite{LLZ05,ST05,ST07,LSZ15,lei2015,lei2016} reads 
\begin{gather} \label{el1} \p_t u+\di (u\otimes u)+\nabla p=\di (F F^\top),\\ \label{el2} 
\p_t F+\di (F\otimes u)=(\nabla u) F,\\
\label{el3}  \di u=0,\\
\label{el4}  \di F^\top=0,\\ \label{el5} 
u(0)=u_0,\quad F(0)=F_0.\end{gather} The unknowns are $u:[0,T]\times \Omega\to \R^d$, $F:[0,T]\times \Omega\to \R^{d\times d}$ and $p:[0,T]\times \Omega\to \R$. Consider the projector  \be \label{e:bloel} P:X^{d\times d}\to X^{d\times d}, \quad \quad P(M)=\mathcal P_{d}(M):=
\left(\begin{array}{@{}c|c|c|c@{}}
 \mathcal P M_1 & \mathcal P  M_2 & \cdots & \mathcal P  M_d
\end{array}\right),\ee where $M_1,\dots,M_d\in  X^d$ are the columns of the matrix $M$. Obviously, $$\di (\mathcal P_d M)^\top=0,\quad M\in X^{d\times d}.$$
It is straigtforward to check that $$\di(\di (F\otimes u))^\top=\di((\nabla u) F)^\top,$$ which allows us to project \eqref{el2} onto $\mathcal P_{d}  X^{n\times n}$.  Set $$n=d(1+d),\ v=(u,F): [0,T]\to X^n\simeq X^d\times X^{d\times d},$$ $$P: X^n\to X^n,\quad P(\upsilon,\Phi)=(\mathcal P \upsilon, \mathcal P_{d} \Phi),$$ $$L: D(L)\subset X_s^{n\times n}\to X^n, \quad L\,\left(\begin{array}{@{}c|c@{}}
M& \Upsilon \\  \hline 
\Upsilon^\top & \Xi
\end{array}\right)=\left(\begin{array}{@{}c@{}}
\di \widehat{\Xi}-\di M \\ \hline
\di \widetilde\Upsilon-\di (\Upsilon^\top)
\end{array}\right).$$ Then \eqref{el1}--\eqref{el5} can be recast as the abstract Euler equation \eqref{e:aeuler}. The structure of equations \eqref{el1}--\eqref{el5} (in particular, their similarity with the ideal MHD equations) allows us to conjecture that they determine the geodesics on some Lie group. The conservativity condition \eqref{e:acons} holds for $v=(u,F)$ sufficiently smooth (this can be verified straightforwardly).   Let us check \eqref{e:invt}. Let $q\in X$ be a sufficiently smooth function. Then $$PL\left(\begin{array}{@{}c|c@{}}
qI& 0 \\  \hline 
0& q\widecheck I ^*
\end{array}\right)=P\left(\begin{array}{@{}c@{}}
d\nabla q-\nabla q \\  \hline 
0
\end{array}\right)=0.$$ As a result, we have \begin{corollary} For any $(u_0,F_0)\in X^d\times X^{d\times d}$ with $\di u_0=0,$ $\di F_0^\top=0$, there exists a generalized solution \eqref{e:geuler} to \eqref{el1}--\eqref{el5}. \end{corollary}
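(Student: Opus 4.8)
The plan is to read off the corollary as a direct application of Theorem \ref{t:ex} followed by Remark \ref{r:time}; the work therefore reduces to checking four things for the operators $P$ and $L$ introduced above and the data $(u_0,F_0)$: that \eqref{el1}--\eqref{el5} is genuinely equivalent to the abstract Euler equation \eqref{e:aeuler}; that the conservativity \eqref{e:acons} holds; that $L$ satisfies the trace condition and the approximation condition; and that $\di u_0=0$, $\di F_0^\top=0$ is precisely the requirement $v_0\in P(X^n)$. With these in hand, Theorem \ref{t:ex} produces a maximizer $(E,B)$ of \eqref{e:conc} in the class \eqref{e:be} with $0\le\mathcal J(v_0,T)<+\infty$, and, \eqref{e:acons} being available, Remark \ref{r:time} converts it into a generalized solution $v$ of \eqref{e:aeuler}, hence of \eqref{el1}--\eqref{el5}, via \eqref{e:geuler}.

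For the recasting I would proceed as for the ideal incompressible MHD. In \eqref{el1} the pressure gradient is annihilated by $\mathcal P$, so the projected momentum equation reads $\p_t u=\mathcal P\bigl(\di(FF^\top)-\di(u\otimes u)\bigr)$, which is the first block of $L(v\otimes v)$ because $\widehat{(F\otimes F)}=FF^\top$. The $\mathcal P_d$-projection of \eqref{el2} is matched with the second block $\di\widetilde{(u\otimes F)}-\di((u\otimes F)^\top)$ of $L(v\otimes v)$ by means of the identity $\di(\di(F\otimes u))^\top=\di((\nabla u)F)^\top$ recorded above, which also guarantees that the constraint $\di F^\top=0$ is propagated in time. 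Conversely, any solution of \eqref{e:aeuler} for this $P,L$ reconstructs a solution of \eqref{el1}--\eqref{el5} with a pressure recovered from the momentum equation, just as in the MHD case. This block-matching, together with the straightforward verification of \eqref{e:acons} for $v=(u,F)$ sufficiently smooth, is the only place where I would actually spell out the tensor bookkeeping for $\widehat\Xi$, $\widecheck\Xi$ and $\widetilde\Upsilon$, and I expect it to be the main --- indeed essentially the only --- source of real work.

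The other hypotheses are cheap. The trace condition follows from Remark \ref{r:tracel}, since \eqref{e:invt} has just been checked above for this $L$. The approximation condition holds because $L^*$ is a first-order differential operator with no zero-order terms, so a standard mollification of any $\zeta\in D(L^*)\cap P(X^n)$ with $L^*\zeta\in L^\infty$ gives an approximating sequence in $\mathcal R^n$ with uniformly bounded $L^*$-images. Finally, $v_0=(u_0,F_0)\in P(X^n)$ means $\mathcal P u_0=u_0$ and $\mathcal P_d F_0=F_0$; the former is $\di u_0=0$, and the latter holds exactly when each column of $F_0$ is divergence-free, i.e. when $\di F_0^\top=0$ --- precisely the assumptions of the corollary.

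I do not anticipate any analytic obstacle: no new a priori estimate is required, since the bound that drives Theorem \ref{t:ex} is supplied entirely by the trace condition, which here comes for free from \eqref{e:invt}. Everything apart from the algebraic identification of \eqref{el1}--\eqref{el5} with \eqref{e:aeuler} and the check of \eqref{e:acons} is a quotation of the abstract results. As emphasised in Remark \ref{r:time}, one should keep in mind that the object produced by \eqref{e:geuler} is only a \emph{generalized} solution: it lies in $H^1_{loc}\cap C([0,T);X^n)$ and takes values in $P(X^n)$, but it may fail to satisfy the initial condition or the pointwise equation \eqref{e:aeuler2} --- the corollary asserts nothing more than the existence of such a generalized solution.
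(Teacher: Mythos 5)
Your proposal is correct and follows essentially the same route as the paper: recast \eqref{el1}--\eqref{el5} as \eqref{e:aeuler} with the stated $P$ and $L$ (using $\widehat{(F\otimes F)}=FF^\top$ and the identity $\di(\di(F\otimes u))^\top=\di((\nabla u)F)^\top$), verify \eqref{e:acons} and \eqref{e:invt} so that the trace condition follows from Remark \ref{r:tracel}, identify $v_0=(u_0,F_0)\in P(X^n)$ with $\di u_0=0$, $\di F_0^\top=0$, and then quote Theorem \ref{t:ex} and Remark \ref{r:time}. The only minor point is that the approximation condition you insist on is not needed for this bare existence statement (neither Theorem \ref{t:ex} nor Remark \ref{r:time} requires it); it only enters the consistency and weak-strong uniqueness results.
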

 Note that \eqref{e:goodv} holds provided $u_0$ and $F_0$ have zero mean (as a vector field and a matrix field, resp.). 
\subsection{Damping-free Maxwell's fluid} The motion of the  incompressible Old\-royd-B  viscoelastic material (also known as Jeffreys' fluid) is described \cite{O50,La13,ZV08} by the problem
\begin{gather} \label{maxw1} \p_t u+\di (u\otimes u)-\mu \Delta u+\nabla p=\di \tau,\\  \label{maxw2}
\p_t \tau+\di (\tau\otimes u)+Q(\nabla u,\tau)+a\tau=\frac 1 2 (\nabla u+(\nabla u)^\top),\\  \label{maxw3}
\di u=0,\\  \label{maxw4}
u(0)=u_0,\quad \tau(0)=\tau_0.\end{gather} The unknowns are $u:[0,T]\times \Omega\to \R^d$, $\tau:[0,T]\times \Omega\to \R^{d\times d}_s$  and $p:[0,T]\times \Omega\to \R$. When the retardation time vanishes, we get Maxwell's fluid (this corresponds to $\mu=0$). 
The choice $a=0$ (cf. \cite{LLZ05,Z18}) tallies with the damping-free case when the relaxation time blows up. We restrict ourselves to the purely hyperbolic case $a=\mu=0$, which coheres with a purely elastic fluid. Note that (cf. \cite{LLZ05,Lin16}) the purely hyperbolic system with $Q=-\nabla u\tau-\tau(\nabla u)^\top$(the upper-convective case) can be made equivalent  to  \eqref{el1}-\eqref{el5} if one assumes the ans\"atze \be \label{ans} \tau=F F^\top,\quad \di F^\top=0.\ee This makes sense because the constraints \eqref{ans} are preserved along the flow. Here we  assume neither $\eqref{ans}$ nor even positive-definiteness of $\tau$. The term $Q$ is related to frame-invariance and is known to create mathematical difficulties. We consider the simplified model \cite{ER15} with $Q\equiv 0$, see also \cite{LLZ05,Z18,ZV08,La13}. This model, unlike \eqref{el1}--\eqref{el5}, is not frame-indifferent, but it is invariant to the transformations which keep the frame inertial (e.g., to the Galilean transformation). We arrive at the following conservative problem: 
\begin{gather}  \label{maxw5} \p_t u+\di (u\otimes u)+\nabla p=\di \tau,\\ 
\label{maxw6}
\p_t \tau+\di (\tau\otimes u)=\frac 1 2 (\nabla u+(\nabla u)^\top),\\  \label{maxw7}
\di u=0,\\
 \label{maxw8}
u(0)=u_0,\quad \tau(0)=\tau_0.\end{gather} Set $$n=d+d(d+1)/2,\ v=(u,\tau): [0,T]\to X^n\simeq X^d\times X^{d\times d}_s,$$ $$P: X^n\to X^n,\quad P(\upsilon,\varsigma)=(\mathcal P \upsilon,\varsigma),$$ $$A: D(A)\subset X^n\to X^n, \quad A(\upsilon,\varsigma)=-\frac 1 2\left(2\di \varsigma,  \nabla \upsilon+(\nabla \upsilon)^\top\right),$$ $$L: D(L)\subset X_s^{n\times n}\to X^n, \quad L\,\left(\begin{array}{@{}c|c@{}}
M& \Upsilon \\  \hline 
\Upsilon^\top & \Xi
\end{array}\right)=\left(\begin{array}{@{}c@{}}
-\di M \\ \hline
-\di (\Upsilon^\top)
\end{array}\right).$$ Then \eqref{maxw5}--\eqref{maxw8} can be written in the abstract form \eqref{e:aeulerd}. Condition \eqref{e:aconsd} follows by integration by parts. Moreover, \eqref{e:invt} is satisfied since $$PL\left(\begin{array}{@{}c|c@{}}
qI& 0 \\  \hline 
0 & q \widecheck{I}^*
\end{array}\right)=P\left(\begin{array}{@{}c@{}}
-\di (qI) \\ \hline
0
\end{array}\right)=\left(\begin{array}{@{}c@{}}
-\mathcal P\nabla q \\ \hline
0
\end{array}\right)=0$$ for each $q\in X$ sufficiently smooth. In light of Remark \ref{r:aoper}, we have the following corollary:
\begin{corollary} For any $(u_0,\tau_0)\in X^d\times X^{d\times d}_s$ with $\di u_0=0$, there exists a generalized solution \eqref{e:geuler} of the extended system \eqref{e:aeulerdd} tantamount to \eqref{maxw5}--\eqref{maxw8}. \end{corollary}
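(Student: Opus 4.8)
The plan is to reduce the damping-free Maxwell system to the abstract framework and then invoke the existence theory already in place. First I would check that, with $n$, $v=(u,\tau)$, $P$, $A$, $L$ chosen as above, the problem \eqref{maxw5}--\eqref{maxw8} coincides with \eqref{e:aeulerd}: this amounts to expanding $L(v\otimes v)$ in the block decomposition of $v\otimes v$, matching the divergence terms, and using that the $u$-component of $P$ is the Leray projector (so $\nabla p$ is absorbed) together with the symmetry of $\tau$. Then, by Remark \ref{r:aoper}, appending the tautology \eqref{e:taut} turns \eqref{maxw5}--\eqref{maxw8} into the extended system \eqref{e:aeulerdd} for $\tilde v=(v,1)$, $\tilde P$, $\tilde L$, which has exactly the structure \eqref{e:aeuler}. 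Note that $\tilde v_0=((u_0,\tau_0),1)$ lies in $\tilde P(X^{n+1})$ precisely when $v_0=(u_0,\tau_0)\in P(X^n)$, i.e. when $\di u_0=0$, with no constraint on $\tau_0$; this is exactly the hypothesis of the corollary.

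Next I would verify the structural hypotheses for $\tilde L$. For the conservativity \eqref{e:aconsd}: $(Av,v)=0$ follows by integrating by parts and using that $\tau$ is symmetric, since then $2(\di\tau,u)=-(\nabla u+(\nabla u)^\top,\tau)$; and $(L(v\otimes v),v)=0$ follows from $\di u=0$ after one more integration by parts. By the identity \eqref{e:aconsdd} in Remark \ref{r:aoper}, \eqref{e:acons} then holds for $\tilde L$. For the trace condition: the computation displayed immediately before the corollary shows that $PL$ annihilates the block-diagonal field $(qI,\,q\,\widecheck{I}^{*})$, that is, \eqref{e:invt} holds for $L$; the final paragraph of Remark \ref{r:aoper} transfers \eqref{e:invt} to $\tilde L$, and Remark \ref{r:tracel} then gives the trace condition for $\tilde L$. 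Finally, $\tilde L^*$ acts as a constant-coefficient first-order differential operator (both $L^*$ and $A^*$ are), so a standard periodic mollification yields the approximation condition.

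With these checks done, the conclusion follows by assembly. Theorem \ref{t:ex} applied to the extended system produces a maximizer $(E,B)$ of the dual problem \eqref{e:conc} in the class \eqref{e:be} with $0\le\mathcal J(\tilde v_0,T)<+\infty$; since \eqref{e:acons} holds for $\tilde L$, Remark \ref{r:time} lets me set $V:=-\tilde PE$ and define the generalized solution $v:=\frac1{T-t}\int_t^T V(s)\,ds\in H^1_{loc}\cap C([0,T);X^{n+1})$, which automatically satisfies $v(t)\in\tilde P(X^{n+1})$. This is a generalized solution in the sense of \eqref{e:geuler} to \eqref{e:aeulerdd}, hence to \eqref{maxw5}--\eqref{maxw8}, which is what the corollary asserts; moreover, Theorem \ref{t:smooth} and the results of Section \ref{s:uniq} apply as well.

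I do not expect a genuine obstacle: the two substantive inputs, namely the explicit action of $PL$ on trace and scalar multiples of the identity and the conservativity identity, are of exactly the same nature as those already carried out for the neo-Hookean model, while the passage from $L$ to $\tilde L$ is handled wholesale by Remark \ref{r:aoper}. The only place demanding care is the block-matrix bookkeeping with the $\widehat{\,\cdot\,}$ and $\widecheck{\,\cdot\,}$ operations and with the adjoint $A^*$, where sign slips are easy; I would double-check these by testing \eqref{e:aconsd} and \eqref{e:invt} against a few explicit fields.
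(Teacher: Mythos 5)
Your proposal is correct and follows essentially the same route as the paper: cast \eqref{maxw5}--\eqref{maxw8} in the form \eqref{e:aeulerd}, verify \eqref{e:aconsd} by integration by parts and \eqref{e:invt} via the displayed computation, pass to the extended system through Remark \ref{r:aoper} (which transfers \eqref{e:invt} to $\tilde L$ and gives \eqref{e:acons} via \eqref{e:aconsdd}), and conclude with Remark \ref{r:tracel}, Theorem \ref{t:ex} and Remark \ref{r:time}. The extra remarks on the approximation condition and on the initial data lying in $\tilde P(X^{n+1})$ are consistent with the paper's framework and do not change the argument.
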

\subsection{Korteweg-de Vries and Zakharov-Kuznetsov} \label{KdV}
Let $\Omega=\mathbb T^1$. The Korteweg-de Vries equation is \be\label{e:kdv} \p_t v+v_{xxx}=6 v v_x, \quad v(0)=v_0.\ee
The unknown is $v:[0,T]\times \Omega\to \R$. It is the Euler-Arnold equation for the Virasoro group \cite{KW08}. The Korteweg-de Vries equation is known to be globally well-posed \cite{CT03} but we still consider this example for the sake of curiosity.
Set $$n=1,\quad P=I, \quad A: D(A)\subset X\to X, \quad A(\upsilon)=\upsilon_{xxx},$$ $$L: D(L)\subset X\to X, \quad L(\sigma)=3\sigma_x.$$ Then \eqref{e:kdv} can be written in the abstract form \eqref{e:aeulerd}. Condition \eqref{e:aconsd} can be easily verified via integration by parts. However, \eqref{e:invt} is not satisfied. 

As in Remark \ref{r:aoper}, consider the extended problem \eqref{e:aeulerdd} with $$\tilde P(\upsilon,a)=\left(\upsilon,\int_\Omega a\,d\mu\right),$$
$$\tilde L\left(\begin{array}{@{}cc@{}}
\sigma &  z \\  
z & a 
\end{array}\right)=\left(\begin{array}{@{}c@{}}
3\sigma_x-z_{xxx}\\  
0 
\end{array}\right).$$ The adjoint operator is $$\tilde L^*: D(\tilde L^*)\subset X^2\to X_s^{2\times 2}, \quad \tilde L^*\left(\begin{array}{@{}c@{}}
\phi \\  
\psi
\end{array}\right)=\frac 1 2\left(\begin{array}{@{}cc@{}}
-6\phi_x& \phi_{xxx}\\  
\phi_{xxx} & 0 
\end{array}\right).$$ If  there is $k\geq 0$ such that $$\left(\begin{array}{@{}cc@{}}
-6\phi_x+k& \phi_{xxx}\\  
\phi_{xxx} & k 
\end{array}\right)\geq 0,$$ then $$-6k\phi_x+k^2-\phi_{xxx}^2\geq 0.$$ Consequently, $$ \int_{\mathbb T^1}\phi_{xxx}^2\leq k^2.$$ By elliptic regularity, $\phi_x$ is uniformly bounded in $W^{2,2}(\mathbb T^1)$ and thus in $L^\infty(\mathbb T^1)$. Accordingly, the trace of  $\tilde L^*(\phi,\psi)$  is uniformly bounded, which implies the trace condition. 

\begin{corollary} \label{c:kdv} For any $v_0\in X$, there exists a generalized solution \eqref{e:geuler} of the extended system \eqref{e:aeulerdd} tantamount to \eqref{e:kdv}. \end{corollary}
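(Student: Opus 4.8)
The plan is to assemble the ingredients prepared in the preceding discussion and to invoke the abstract existence theory of Section \ref{s:exi}. Equation \eqref{e:kdv} has already been cast in the form \eqref{e:aeulerd} with $P=I$, $L(\sigma)=3\sigma_x$ and $A(\upsilon)=\upsilon_{xxx}$, and the conservativity \eqref{e:aconsd} has been verified by integration by parts. Since \eqref{e:invt} fails for this $L$, I would first pass to the extended problem \eqref{e:aeulerdd} through the device of Remark \ref{r:aoper}: by that remark the extended system has the structure of \eqref{e:aeuler}, satisfies the conservativity \eqref{e:acons} because \eqref{e:aconsd} holds, and is tantamount to \eqref{e:kdv} in the sense that any (generalized) solution of \eqref{e:aeulerdd} with initial datum $\tilde v_0=(v_0,1)$ has first component solving \eqref{e:kdv}. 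Note that $\tilde v_0\in\tilde P(X^2)$ automatically, since $P=I$ and $\mu$ is a probability measure, which is precisely why no restriction on $v_0$ is needed.

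The technical heart, already carried out just above, is the verification of the trace condition for $\tilde L$: from the assumed uniform lower bound on the eigenvalues of the symmetric $2\times2$ matrix whose entries involve $-6\phi_x$ and $\phi_{xxx}$, the nonnegativity of the $2\times2$ determinant together with integration over $\mathbb T^1$ yields a uniform $L^2(\mathbb T^1)$ bound on $\phi_{xxx}$; a one-dimensional elliptic regularity bootstrap then upgrades this to a uniform $L^\infty$ bound on $\phi_x$, hence a uniform bound on the trace of $\tilde L^*(\phi,\psi)$, which is exactly the trace condition of Definition \ref{deftr}. I would also record that the approximation condition holds, because $\tilde L^*$ is a differential operator, so a standard mollification furnishes the required approximating sequences with uniformly bounded $L^\infty$ norms of the images.

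With these facts in place, Theorem \ref{t:ex} applies to $\tilde L$ and produces a maximizer $(E,B)$ of the optimal ballistic transport problem \eqref{e:conc} in the class \eqref{e:be}, with $0\le\mathcal J(\tilde v_0,T)<+\infty$. Since \eqref{e:acons} holds for $\tilde L$, Remark \ref{r:time} then defines the generalized solution $\tilde v$ of \eqref{e:aeulerdd} by the averaging formula \eqref{e:geuler}; it lies in $\tilde P(X^2)$ and automatically belongs to the same regularity class as the strong solutions. Reading off the first component and using the equivalence of \eqref{e:aeulerdd} with \eqref{e:kdv} from Remark \ref{r:aoper} then yields the asserted generalized solution of the Korteweg-de Vries equation. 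The only genuinely nontrivial point is the trace-condition estimate, and even that is mild here thanks to the strong smoothing of the third-order term in one space dimension; the remainder is a direct instantiation of the general machinery of Sections \ref{seca} and \ref{s:exi}.
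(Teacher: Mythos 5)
Your proposal is correct and follows essentially the same route as the paper: casting KdV in the form \eqref{e:aeulerd}, passing to the extended system \eqref{e:aeulerdd} via Remark \ref{r:aoper}, verifying the trace condition for $\tilde L$ through the determinant inequality, integration over $\mathbb T^1$ (using that $\phi_x$ has zero mean) and one-dimensional elliptic regularity, and then invoking Theorem \ref{t:ex} together with Remark \ref{r:time}. The only difference is that you additionally record the approximation condition, which is not needed for this existence statement (it enters only in the consistency and uniqueness results), but this is harmless.
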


\begin{remark}[Zakharov-Kuznetsov equation] The considerations above, including Corollary \ref{c:kdv}, can be reproduced almost verbatim for the Cauchy problem for the Zakharov-Kuznetsov equation \cite{ZK, LL13, HK13} on $\Omega=\mathbb T^d$, $d=2,3$, that reads \be\label{e:zk} \p_t v+\partial_{x_1}\Delta v=v \partial_{x_1} v, \quad v(0)=v_0.\ee Unlike KdV, \eqref{e:zk} is not completely integrable. To the best of our knowledge, the global well-posedness with  $v_0\in X=L^2(\mathbb T^d)$ has yet not been proved,  cf. \cite{K21}. However, for $\Omega=\mathbb R^d$, $d=2,3$, the global well-posedness for the $L^2$ data has recently been established in \cite{K19,HK21}.  \end{remark}

\begin{exmp}[Cnoidal waves] The simplest strong solutions to KdV on $\mathbb{T}$ are the cnoidal waves $$u(t,x)=c_1 +c_2 \mathrm{cn} (c_3 x- c_4 t; m),$$ where $\mathrm{cn}$ is the elliptic cosine, with appropriate constants $c_1,c_2,c_3,c_4, m$. Note that \begin{equation}\label{e:maxims} s:=\max_{x\in \mathbb{T}} u_x(t,x)\end{equation} does not depend on $t$. Let  $$\tilde v(t)=\left(\begin{array}{@{}c@{}}
v(t) \\  
a(t)
\end{array}\right)\in \tilde P(X^2)$$ be the generalized solution provided by Corollary \ref{c:kdv}. Here $a$ does not depend on $x$ due to the definition of $\tilde P$. If $T$ is not too large, and $t\in (0,T)$, we have $$\frac 1 {T-t} I\geq \frac 1 T I\geq \left(\begin{array}{@{}cc@{}}
6 u_x& - u_{xxx}\\  
- u_{xxx} & 0 
\end{array}\right)=-2 \tilde L^* \left(\begin{array}{@{}c@{}}
u \\  
1
\end{array}\right),$$ whence \eqref{e:pd++} holds and Theorem \ref{t:smooth} is applicable. However, for $T>\frac 1 {6s}$ and very small $t>0$, we have $1-6(T-t)s< 0$ and $1-6(T-t)u_x< 0$ in a neighborhood of the maximizer in \eqref{e:maxims}. Consequently, $I\not\geq 2(t-T)\tilde L^*(u(t),1)$ on a set of positive measure in $(0,T)\times \mathbb T$.  Thus, if $T>\frac 1 {6s}$, the pair $(v,a)$  does not coincide with $(u,1)$ by Corollary \ref{c:enlevels}. Moreover, $v$ and $a$ either violate at least one of the initial conditions $v(0)=v_0$, $a(0)=1$ or do not satisfy at least one of the identities $\p_t v+v_{xxx}=6 v v_x$, $\p_t a=0$ (but we do not know how many and which of the described scenarios are actually true). 
\end{exmp}
\subsection{The template matching equation} This example is the last but definitely not the least. It interpolates between the two prototype examples from Section \ref{eulhj}, and violates the trace condition. The template matching equation, also known as the inviscid multidimensional  right-invariant Burgers equation, reads \begin{gather} \label{tm1} \p_t u+\di (u\otimes u)+\frac 1 2 \nabla |u|^2=0,\\
\label{tm3} u(0)=u_0.\end{gather}
The unknown is $u:[0,T]\times \Omega\to \R^d$. It describes the geodesics of the diffeomorphism group with right-invariant $L^2$ metric, see, e.g., \cite{KLMP,ra18,hm01}. 

Set $$n=d,\ P=I,$$  $$L: D(L)\subset X_s^{n\times n}\to X^n, \quad L(M)=
 -\di M-\frac 1 2 \nabla \tr M.$$  It is easy to see that \eqref{e:acons} formally holds, 
and \eqref{tm1}--\eqref{tm3} is equivalent to the abstract Euler equation \eqref{e:aeuler} with $P$ and $L$ just defined. The adjoint operator is $$L^*: D(L^*)\subset X^n\to X_s^{n\times n}, \quad L^*(\psi)=\frac 1 2 (\nabla \psi+(\nabla \psi)^\top+(\di \psi) I);$$ consequently, the trace condition is not valid. However, $PL(I)=0$, thus by Theorem \ref{t:exweak} there is a solution $(E,B)$ to the dual problem. The results of Section \ref{s:uniq} are applicable, but Theorem \ref{t:ldistr} and Corollary \ref{c:enlevels} are only relevant if the maximizer $(E,B)$ belongs to the restricted regularity class \eqref{e:be}. Note also that \eqref{e:goodv} holds provided $u_0$ has zero mean.

\subsection*{Acknowledgment} The author wishes to thank the anonymous referees for many valuable comments, which led to important improvements of the manuscript.  The author is very grateful to Yann Brenier and Wenhui Shi for inspiring discussions on the subject. 
The research was partially supported by the Portuguese Government through FCT/MCTES and by the ERDF through PT2020 (projects UID/MAT/00324/2020,\\ PTDC/MAT-PUR/28686/2017 and TUBITAK/0005/2014). 

\subsection*{Conflict of interest statement} We have no conflict of interest to declare.

\end{document}